\newtheorem{theorem}{Theorem}
\newtheorem{lemma}[theorem]{Lemma}
\newtheorem{corollary}[theorem]{Corollary}
\newtheorem{proposition}[theorem]{Proposition}
\newtheorem{addendum}[theorem]{Addendum}
\theoremstyle{remark}
\newtheorem{remark}[theorem]{Remark}
\newcommand{\FF}{\mathbf{F}}
\newcommand{\ZZ}{\mathbf{Z}}
\newcommand{\BB}{\mathcal{B}}
\newcommand{\PP}{\mathbf{P}}
\newcommand{\OO}{\mathcal{O}}
\newcommand{\EE}{\mathcal{E}}
\newcommand{\II}{\mathcal{I}}
\newcommand{\Rco}{R_{\mathrm{co}}}
\DeclareMathOperator{\res}{R}
\DeclareMathOperator{\charac}{char}
\DeclareMathOperator{\Spec}{Spec}
\DeclareMathOperator{\id}{id}
\DeclareMathOperator{\Norm}{Norm}
\DeclareMathOperator{\AGL}{AGL}
\DeclareMathOperator{\Tr}{Tr}
\DeclareMathOperator{\disc}{disc}
\DeclareMathOperator{\Gal}{Gal}
\DeclareMathOperator{\mult}{mult}
\DeclareMathOperator{\Ind}{Ind}
\DeclareMathOperator{\Res}{Res}
\DeclareMathOperator{\Hom}{Hom}
\DeclareMathOperator{\Sym}{Sym}
\DeclareMathOperator{\sgn}{sgn}
\DeclareMathOperator{\vol}{vol}
\DeclareMathOperator{\ST}{ST}
\DeclareMathOperator{\ord}{ord}
\DeclareMathOperator{\hto}{ht}
\DeclareMathOperator{\ch}{ch}
\DeclareMathOperator{\chw}{chw}
\DeclareMathOperator{\lcm}{lcm}
\newcommand{\alg}{\mathrm{alg}}
\newcommand{\sep}{\mathrm{sep}}
\title{Scrollar invariants, syzygies and representations of the 
symmetric group II}
\author{Floris Vermeulen}
\address{Department of Mathematics, KU Leuven, Belgium}
\email{floris.vermeulen@kuleuven.be}
\date{}
\begin{document}

\subjclass{14H30,13D02,20C30}

\maketitle

\begin{abstract}
Let $\varphi: C\to \PP^1$ be a degree $d$ cover of curves. In work by Castryck, Zhao and the author, we showed how one can attach to each partition $\lambda$ of $d$ a multi-set of scrollar invariants of $\lambda$ with respect to $\varphi$. We studied these invariants when $\varphi$ is simply branched, and related these new scrollar invariants to known geometric data. In this article we show how one can remove this simple branching condition, using the notion of $S_d$-closure as developed by Bhargava and Satriano. With this new framework, we are able to generalize all results from this work to arbitrary covers $C\to \PP^1$. In particular, we obtain a syzygy-free interpretation for the splitting types of the syzygy bundles in the Casnati--Ekedahl resolution of an arbitrary cover $\varphi: C\to \PP^1$. By using the Maroni bound, we are able to give new general bounds on these splitting types.

%The main new tools are the notion of $S_d$-closure as developed by Bhargava and Satriano, and the theory of higher Specht polynomials.
\end{abstract}

\section{Introduction}\label{sec:intro}

This article is an extension of recent work by Castryck, Zhao and the author~\cite{syzrep}, in which we studied scrollar invariants of curves using Galois theoretic tools. Throughout this work, all curves will be projective and reduced, but they may be singular or reducible. We use the words smooth and non-singular interchangeably. By a \emph{nice curve}, we will mean a smooth, projective and geometrically integral curve. 

Let $d$ be a positive integer. We will work over a field $k$ of characteristic zero or larger than $d$. Let $\varphi: C\to \PP^1$ be a separable degree $d$ cover of curves, where $C$ is a nice curve of genus $g$. Then the \emph{scrollar invariants}, see e.g.\ \cite{coppenskeemmartens}, of $C\to \PP^1$ are defined as the unique integers $e_1\leq \ldots \leq e_{d-1}$ for which
\begin{equation}\label{eq:splitting.varphi*OC}
\varphi_* \OO_C \cong \OO_{\PP^1} \oplus \OO_{\PP^1}(-e_1) \oplus \ldots \oplus \OO_{\PP^1}(-e_{d-1}).
\end{equation}
Equivalently, $e_i$ is the smallest integer $n$ such that $h^0(C, nD) - h^0(C, (n-1)D)>i$, where $D$ is a fibre of $\varphi$. Using Riemann--Roch, one can prove that 
\begin{enumerate}
\item $e_1 + \ldots + e_{d-1} = g+d-1$, and
\item $e_{d-1} \leq \frac{2g+2d-2}{d}$.
\end{enumerate}
This last inequality is known as the \emph{Maroni bound}. 

In~\cite{syzrep}, we developed a representation theoretic viewpoint for these scrollar invariants using Galois theory. We showed how one can attach to every irreducible representation of the symmetric group $S_d$ a multi-set of \emph{generalized scrollar invariants} of $\varphi: C\to \PP^1$. Recall that the irreducible representations of $S_d$ correspond naturally to partitions $\lambda$ of $d$. The corresponding irreducible representation is called the \emph{Specht module $V_\lambda$}. Then in more detail, to each partition $\lambda\vdash d$ one may associate a multiset $e_{\lambda, 1}\leq \ldots \leq e_{\lambda, \dim V_{\lambda}}$ of \emph{scrollar invariants corresponding to $\lambda$}. For the partition $\lambda = (d-1,1)$ one recovers the usual scrollar invariants $e_i$, while for $\lambda = (d)$ one obtains $0$. 

In~\cite{syzrep} we studied these generalized scrollar invariants with the added restriction that the map $\varphi$ is simply branched, i.e.\ all ramification patterns of $\varphi$ are of the form $(2, 1^{d-2})$. The main goal of this article is to extend the results from~\cite{syzrep} to arbitrary degree $d$ covers $\varphi: C\to \PP^1$ (in characteristic zero or larger than $d$). Several new ideas are needed for this, which we discuss in more detail in section~\ref{sec:differences}.

\subsection{Main results} Let us discuss the main results in some more detail. As above, let $\varphi: C\to \PP^1$ be a degree $d\geq 4$ cover of curves, where $C$ is a nice curve of genus $g$. Let $K/k(t)$ be the corresponding function field extension, where $k(t) = k(\PP^1)$ is the function field of $\PP^1$. Let $L/k(t)$ be the \emph{$S_d$-closure} of $K/k(t)$, as defined in~\cite{bhargavasatriano}. This is a formal construction of an étale algebra over $k(t)$ which behaves like the Galois closure of $K/k(t)$, but always with Galois group $S_d$. In case that the Galois closure $M/k(t)$ of $K/k(t)$ already has Galois group $S_d$, we simply have that $M=L$, but in general $L$ may be reducible. As an $S_d$-representation, $L$ is isomorphic to the regular representation of $S_d$, hence it decomposes as
\[
L\cong \bigoplus_{\lambda\vdash d} W_\lambda,
\]
where $W_\lambda \cong V_\lambda^{\dim V_{\lambda}}$ is the \emph{isotypic component of $\lambda$ in $L$}, i.e.\ it consists of all elements of $L$ which generate $V_\lambda$ as a representation (together with zero). We show how one may attach to each such $W_\lambda$ (equivalently each $\lambda$) a multi-set of $\dim V_\lambda$ integers called the \emph{scrollar invariants of $\lambda$ (with respect to $K$)}: 
\[
e_{\lambda_1}, e_{\lambda, 1}, \ldots, e_{\lambda, 1}, \quad e_{\lambda, 2}, \ldots, e_{\lambda, 2}, \quad \ldots \quad e_{\lambda, \dim V_\lambda}, \ldots, e_{\lambda, \dim V_{\lambda}} 
\]
where each $e_{\lambda, i}$ occurs with multiplicity $\dim V_\lambda$. For example, the unique scrollar invariant of the trivial partition $(d)$ is $0$, while those of the partition $(d-1,1)$ are the usual scrollar invariants $e_1, \ldots, e_{d-1}$ of $\varphi$. 

Our main results include extensions of all of the results in \cite{syzrep} to arbitrary covers $\varphi: C\to \PP^1$, thereby removing the condition of simple branching. These results include the following:
\begin{enumerate}
\item Proposition~\ref{prop:hooks}: a description of the scrollar invariants of hooks $(d-i, 1^i)$, generalizing~\cite[Prop.\,1]{syzrep},
\item Proposition~\ref{prop:duality}: a duality statement relating the scrollar invariants of a partition $\lambda$ with those of its dual $\lambda^*$, as in~\cite[Prop.\,31]{syzrep}, and
\item Corollary~\ref{cor:volume.formula}: a \emph{volume formula} for the sum of the scrollar invariants corresponding to $\lambda$, generalizing~\cite[Prop.\,30]{syzrep}.
\end{enumerate}
Moreover, our interpretation of the scrollar invariants of the partitions $\lambda_i = (d-i, 2, 1^{d-2-i})$ ($2\leq i\leq d-2$) as the splitting types of the syzygy modules of the Casnati--Ekedahl resolution of $\varphi$ extends as well. In particular, this gives a syzygy-free way of defining these splitting types, and gives a natural candidate definition in the number field setting, following the analogy from~\cite[Sec.\,7]{hessRR}.

In more detail, define 
\[
\EE = \OO_{\PP^1}(e_1)\oplus \OO_{\PP^1}(e_2)\oplus \ldots \oplus \OO_{\PP^1}(e_{d-1}).
\]
Then the morphism $\varphi: C\to \PP^1$ decomposes as $C\xhookrightarrow{\iota} \PP(\EE)\xrightarrow{\pi} \PP^1$, where $\pi$ is the natural bundle map and $\iota$ is the relative canonical embedding. Let $j: \PP(\EE)\to  \PP^{e_1 + \ldots + e_{d-1} +d-2} = \PP^{g+2d-3}$ be the \emph{tautological morphism}. Let $R = [\pi^*\OO_{\PP^1}(1)]$ be the \emph{ruling class} in the Picard group of $\PP(\EE)$ and let $H = [j^* \OO_{\PP^{g + 2d-3}}(1)]$ be the \emph{hyperplane class}. The Picard group of $\PP(\EE)$ is generated by these two elements. Then the \emph{Casnati--Ekedahl resolution of $\varphi$}~\cite{casnati_ekedahl} is the minimal free resolution of $C$ inside $\PP(\EE)$, which takes the form
\begin{multline}  
 0 \rightarrow \mathcal{O}_{\PP(\mathcal{E})} (-dH + (g-d-1)R) \rightarrow 
 \\
\bigoplus_{j=1}^{\beta_{d-3}} \mathcal{O}_{\PP(\mathcal{E})}(-(d-2)H + b_j^{(d-3)}R) \rightarrow \bigoplus_{j=1}^{\beta_{d-4}} \mathcal{O}_{\PP(\mathcal{E})}(-(d-3)H + b_j^{(d-4)}R)  \rightarrow   \label{Schresolution} \\
\cdots \rightarrow \bigoplus_{j=1}^{\beta_1} \mathcal{O}_{\PP(\mathcal{E})}(-2H + b_j^{(1)}R) \rightarrow \mathcal{O}_{\PP(\mathcal{E})} \rightarrow \mathcal{O}_C \rightarrow 0.
\end{multline}
The $b_{j}^{(i)}$ are certain integers which we call the \emph{splitting types} (see also Schreyer~\cite{schreyer}). Here 
\[
\beta_i = \frac{d}{i+1} (d-2-i)\binom{d-2}{i-1}.
\]

We prove that these splitting types $b_{j}^{(i)}$ are also scrollar invariants.

\begin{theorem}\label{thm:schreyer.is.scrollar}
Let $C\to \PP^1$ be a cover of degree $d\geq 4$ over a field of characteristic zero or larger than $d$, where $C$ is a nice curve, and take $i\in \{1, \ldots, d-3\}$. Let $e_1'\leq \ldots e_{\beta_i}'$ be the multi-set of scrollar invariants of the partition $(d-i-1, 2, 1^{i-1})$ with respect to $C\to \PP^1$ and let $b_1^{(i)}\leq \ldots \leq b_{\beta_i}^{(i)}$ be the splitting types of the $i$-th syzygy bundle of the Casnati--Ekedahl resolution of $C$. Then for every $j$
\[
e_j = b_j^{(i)}.
\]
\end{theorem}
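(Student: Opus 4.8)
The plan is to promote the numerical statement to an isomorphism of vector bundles on $\PP^1$. Write $\lambda_i := (d-i-1,2,1^{i-1})$, so that, by definition, the scrollar invariants $e_1'\le\ldots\le e_{\beta_i}'$ of $\lambda_i$ are the negatives of the splitting type of the multiplicity bundle $\mathcal{M}_{\lambda_i} := \mathcal{H}om_{S_d}(V_{\lambda_i},\mathcal{L})$ on $\PP^1$, where $\mathcal{L}$ is the locally free model over $\PP^1$ of the $S_d$-closure $L$ provided by Bhargava--Satriano; its rank is $\beta_i = \dim V_{\lambda_i}$. On the geometric side, one can write the $i$-th term of the Casnati--Ekedahl resolution as $\pi^*\mathcal{G}_i\otimes\OO_{\PP(\EE)}(-(i+1)H)$ with $\mathcal{G}_i := \bigoplus_{j=1}^{\beta_i}\OO_{\PP^1}(b_j^{(i)})$, so that the splitting type of $\mathcal{G}_i$ is exactly $b_1^{(i)}\le\ldots\le b_{\beta_i}^{(i)}$. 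The theorem therefore amounts to the claim that $\mathcal{G}_i$ and $\mathcal{M}_{\lambda_i}^{\vee}$ have the same splitting type (up to the sign and twist conventions relating the two definitions), and since a bundle on $\PP^1$ is determined by its splitting type, it suffices to produce an isomorphism of these two sheaves.

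First I would carry out the underlying representation-theoretic computation fibrewise. A general fibre of $\varphi$ is a set of $d$ points which, under the relative canonical embedding $\iota$, lie in general position in $\PP^{d-2}$, and $S_d$ permutes them. The minimal free resolution of $d$ general points in $\PP^{d-2}$ is $S_d$-equivariant, and its $i$-th syzygy module is isotypic of type $V_{\lambda_i}$ with a multiplicity space of dimension $\beta_i$; this is precisely the computation that underlies the simply branched case of~\cite{syzrep}, now read off at a general point of $\PP^1$. Consequently $\mathcal{G}_i$ and $\mathcal{M}_{\lambda_i}^{\vee}$ have the same rank and are canonically isomorphic over the open locus where $\varphi$ is unramified, and the volume formula of Corollary~\ref{cor:volume.formula} shows that they also have the same degree.

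The substance of the argument is to upgrade this generic agreement to an equality of integral structures across the branch locus, and it is here that the $S_d$-closure is indispensable: where the honest Galois closure of $K/k(t)$ degenerates over a branch point, the Bhargava--Satriano model $\mathcal{L}$ remains locally free, so that $\mathcal{M}_{\lambda_i}$ extends canonically over all of $\PP^1$. To pin down the local behaviour of $\mathcal{G}_i$ I would break the Casnati--Ekedahl resolution into short exact sequences $0\to N_{k+1}\to F_k\to N_k\to 0$ and, using Bott-type vanishing for the $\pi$-pushforwards of the $F_k$, compute the cohomology of $\mathcal{G}_i(n)$ in terms of that of $\OO_C$. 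Comparing these local contributions, ramification type by ramification type, with the local contributions of the corresponding isotypic part of $\mathcal{L}$ then yields the sheaf isomorphism $\mathcal{G}_i\cong\mathcal{M}_{\lambda_i}^{\vee}$, and hence $e_j' = b_j^{(i)}$ for every $j$.

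The main obstacle I anticipate is exactly this local analysis at the branch points. Equality of rank and of total degree, supplied by the fibrewise identification together with the volume formula, is genuinely insufficient on its own: two bundles with the same rank and degree can have distinct splitting types differing only in their spread. What must be established is that the local contribution of an arbitrary ramification type $\mu\vdash d$ to the degree of the geometric bundle $\mathcal{G}_i$ agrees with its contribution to the degree of the $S_d$-closure model $\mathcal{M}_{\lambda_i}$. Proving this local compatibility for all ramification patterns, rather than only for the pattern $(2,1^{d-2})$ of a simply branched cover, is the heart of the matter and the precise point at which the framework of this article supersedes the simple-branching hypothesis of~\cite{syzrep}.
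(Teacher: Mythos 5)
Your bundle-theoretic reformulation is consistent with the paper's framework: the multiplicity bundle $\mathcal{H}om_{S_d}(V_{\lambda_i},\mathcal{L})$ built from the Bhargava--Satriano model is a legitimate repackaging of the reduced-basis definition of the scrollar invariants of $(d-i-1,2,1^{i-1})$, the generic identification of its dual with $\mathcal{G}_i$ via the $S_d$-equivariant resolution of $d$ points is exactly the content of Section~\ref{sec:resolution}, and the volume formula (Corollary~\ref{cor:volume.formula}) does govern the degrees. But what you have written is a plan, not a proof: the decisive step --- passing from the generic isomorphism to control at the branch points --- is the step you yourself defer as ``the heart of the matter,'' and both tools you propose for it would fail. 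First, one cannot ``compute the cohomology of $\mathcal{G}_i(n)$ in terms of that of $\OO_C$'': the cohomology of fibre twists of $\OO_C$ is determined by the ordinary scrollar invariants $e_1,\ldots,e_{d-1}$ of $\varphi$ alone, and the splitting types $b_j^{(i)}$ are \emph{not} functions of these (that is precisely why they are independent invariants whose possible range is an open problem, as recalled in the introduction); any comparison must see the multiplication in the algebra $\varphi_*\OO_C$, which is exactly the structure the $S_d$-closure is there to encode. Second, your final reduction --- matching ``local contributions to the degree'' of the two bundles, ramification type by ramification type --- is insufficient even granted the generic isomorphism: the defect you correctly flag for global rank and degree recurs locally, since a generic isomorphism whose local degree contribution vanishes at every point can still glue $\OO_{\PP^1}(1)\oplus\OO_{\PP^1}(-1)$ against $\OO_{\PP^1}\oplus\OO_{\PP^1}$. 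What would suffice is a one-sided statement, namely that the generic isomorphism extends to an honest morphism of sheaves (a containment of lattices at every point of $\PP^1$), after which equality of total degrees kills the torsion cokernel; your proposal establishes no such containment.

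It is also worth noting how the paper avoids the difficulty you run into: no local analysis of the Casnati--Ekedahl resolution at branch points is ever performed. All local work at ramification is done once, purely on the $S_d$-closure side --- the explicit higher Specht bases of $G(k[t^{1/e}]/k[t])$ (Lemma~\ref{lem:basis.Se.closure.k[t1/e]}) and the resulting discriminant computation (Lemma~\ref{lem:discr.formula.lambda.part}) --- and is then packaged into the volume formula. The comparison with the geometric side is global: the representation-theoretic construction of Section~\ref{sec:resolution}, carried out on the generic fibre with \emph{reduced bases}, yields a Casnati--Ekedahl-type resolution whose twists are by construction the scrollar invariants of the relevant partitions, and uniqueness of minimal free resolutions together with the volume formula (through Lemma~\ref{lem:candidate.basis.is.reduced}, which certifies a candidate basis as reduced once its degree sum is correct) identifies those twists with the $b_j^{(i)}$. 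So your instinct that the branch locus is where the simple-branching hypothesis must be paid for is right, but the payment is made inside the $S_d$-closure, where explicit integral bases exist, rather than on the syzygy bundles, where no comparable handle is available.
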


For a subgroup $H$ of $S_d$, consider the \'etale subalgebra $L^H$ of $L$. This corresponds to another curve equipped with a morphism of degree $[S_d:H]$ to $\PP^1$, which we call the \emph{resolvent curve (with respect to $H$)}, and denote by
\[
\res_H \varphi: \res_H C\to \PP^1.
\]
In \cite{syzrep} we gave a description of the scrollar invariants of such resolvent curves in terms of the scrollar invariants of various partitions $\lambda$. When we remove the simple branching condition from the morphism $C\to \PP^1$, these resolvent curves can become singular or reducible, as will be explained in more detail in section~\ref{sec:resolvents}. However, there is still a notion of scrollar invariants in this context, and the description from \cite{syzrep} carries over as well.

\begin{theorem}\label{thm:scrollar.invs.resolvent}
Let $\varphi: C\to \PP^1$ be a degree $d\geq 4$ morphism over a field $k$ of characteristic zero or larger than $d$, where $C$ is a nice curve. Let $H$ be a subgroup of $S_d$. Then the scrollar invariants of $\res_H \varphi: \res_H C\to \PP^1$ are given by taking the union over all non-trivial partitions $\lambda \vdash d$, of the multi-sets
of scrollar invariants of $\lambda$ with respect to $\varphi$, where each multi-set is to be considered with multiplicity 
 \begin{equation*}   
  \mult(V_\lambda, \Ind^{S_d}_H \mathbf{1}).
\end{equation*}
(Here $\Ind_H^{S_d} \mathbf{1}$ denotes the permutation representation of $S_d/H$.)
\end{theorem}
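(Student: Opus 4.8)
The plan is to reduce everything to the decomposition of the $H$-invariants of a single $S_d$-equivariant vector bundle on $\PP^1$. First recall how the scrollar invariants of a partition are produced. Let $\pi\colon \tilde C\to \PP^1$ be the normalization of $\PP^1$ in the étale algebra $L$; this is a finite morphism from a reduced (possibly reducible) curve carrying an $S_d$-action over $\PP^1$, and $\pi_*\OO_{\tilde C}$ is an $S_d$-equivariant vector bundle of rank $d!$. Since $\charac k=0$ or $\charac k>d$, the group algebra $k[S_d]$ is semisimple and the $V_\lambda$ are absolutely irreducible, so there is a canonical $S_d$-equivariant isotypic decomposition
\[
\pi_*\OO_{\tilde C}\;\cong\;\bigoplus_{\lambda\vdash d} V_\lambda\otimes_k \mathcal M_\lambda,\qquad \mathcal M_\lambda = \Hom_{S_d}(V_\lambda,\pi_*\OO_{\tilde C}),
\]
in which $S_d$ acts only on the factor $V_\lambda$ and $\mathcal M_\lambda$ is a vector bundle of rank $\dim V_\lambda$. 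By construction the scrollar invariants of $\lambda$ are exactly the splitting type of $\mathcal M_\lambda$, i.e.\ $\mathcal M_\lambda\cong \bigoplus_{i} \OO_{\PP^1}(-e_{\lambda,i})$.

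Next I would identify the pushforward attached to the resolvent. By construction $\res_H\varphi$ is the finite cover of $\PP^1$ associated to the $\OO_{\PP^1}$-subalgebra $\OO_{\tilde C}^H\subseteq \OO_{\tilde C}$, equivalently to the étale subalgebra $L^H\subseteq L$; indeed, forming integral closures commutes with taking $H$-invariants, since any $x\in L^H$ that is integral over $\OO_{\PP^1}$ automatically lies in $\OO_{\tilde C}^H$. Hence
\[
(\res_H\varphi)_*\OO_{\res_H C}\;\cong\;(\pi_*\OO_{\tilde C})^H.
\]
As a subsheaf of the torsion-free sheaf $\pi_*\OO_{\tilde C}$, the right-hand side is torsion-free, hence locally free on the smooth curve $\PP^1$, so its splitting type is well defined and, following section~\ref{sec:resolvents}, computes the scrollar invariants of the (possibly singular or reducible) cover $\res_H\varphi$.

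The computation is now purely formal. Since $H\subseteq S_d$ acts trivially on each $\mathcal M_\lambda$,
\[
(\pi_*\OO_{\tilde C})^H\;\cong\;\bigoplus_{\lambda\vdash d}(V_\lambda)^H\otimes_k \mathcal M_\lambda\;\cong\;\bigoplus_{\lambda\vdash d}\mathcal M_\lambda^{\oplus \dim (V_\lambda)^H},
\]
and Frobenius reciprocity gives $\dim (V_\lambda)^H = \mult(\mathbf 1,\Res^{S_d}_H V_\lambda)=\mult(V_\lambda,\Ind^{S_d}_H\mathbf 1)$. Therefore the splitting type of $(\res_H\varphi)_*\OO_{\res_H C}$ is the union, over all $\lambda\vdash d$, of the splitting type of $\mathcal M_\lambda$ taken with multiplicity $\mult(V_\lambda,\Ind^{S_d}_H\mathbf 1)$, which is exactly the claimed union of the multi-sets of scrollar invariants of $\lambda$. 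The trivial partition $(d)$ contributes $\mathcal M_{(d)}\cong \OO_{\PP^1}$ with multiplicity $\mult(\mathbf 1,\Ind^{S_d}_H\mathbf 1)=1$; this is precisely the canonical $\OO_{\PP^1}$-summand discarded in the passage to scrollar invariants, which is why the union runs only over non-trivial $\lambda$. A rank check confirms consistency, as $\sum_{\lambda}\dim V_\lambda\cdot\mult(V_\lambda,\Ind^{S_d}_H\mathbf 1)=\dim \Ind^{S_d}_H\mathbf 1=[S_d:H]$ equals the degree of $\res_H\varphi$.

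The representation theory above is routine; the main obstacle is setting up the two sides so that they really compute the \emph{same} invariants. Concretely, one must verify that the integral model $\OO_{\tilde C}$ used to define the scrollar invariants of $\lambda$ is the one whose $H$-invariants give $(\res_H\varphi)_*\OO_{\res_H C}$, and that scrollar invariants of reducible or singular covers are genuinely read off from such locally free pushforwards. Once section~\ref{sec:resolvents} provides these compatibilities, together with the fact that integral closure commutes with $H$-invariants, the proof is the formal manipulation above.
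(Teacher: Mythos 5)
Your formal skeleton---isotypic decomposition of an $S_d$-equivariant bundle on $\PP^1$, passage to $H$-invariants summand by summand, and Frobenius reciprocity---is the right manipulation, and it is essentially what the paper's proof (which defers to \cite[Thm.\,3]{syzrep}) carries out. The genuine gap is that you anchor this manipulation to the wrong integral model, and in this paper that is not a technicality but the central point. You take $\tilde C$ to be the normalization of $\PP^1$ in $L$, i.e.\ you work with the maximal order, and you assert both that the scrollar invariants of $\lambda$ are ``by construction'' the splitting type of $\mathcal{M}_\lambda = \Hom_{S_d}(V_\lambda, \pi_*\OO_{\tilde C})$, and that $\res_H C$ corresponds to $\OO_{\tilde C}^H$ because integral closure commutes with $H$-invariants. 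Neither identification holds here. The scrollar invariants of $\lambda$ are defined from $\OO_\lambda = W_\lambda \cap \OO_L$, where $\OO_L = G(\OO_K/k[t])$ is the Bhargava--Satriano $S_d$-closure \cite{bhargavasatriano}; this order is integrally closed only when $\varphi$ is simply branched, and section~\ref{sec:differences} stresses that with the maximal-order definition almost all results (hooks, duality, and above all the volume formula) fail. Likewise $\res_H C$ is \emph{defined} by gluing $\Spec \OO_L^H$ and $\Spec \OO_{L,\infty}^H$; it can be singular (Proposition~\ref{prop:resolvent.maximal}), so it is not the normalization of $\PP^1$ in $L^H$. Your (correct) observation that integral closure commutes with $H$-invariants identifies $(\pi_*\OO_{\tilde C})^H$ with the pushforward from the \emph{normalization} of $\res_H C$, not from $\res_H C$ itself; that these genuinely differ is visible in the tables of section~\ref{sec:examples}, where for instance ramification of type $(2,2)$ or $(4)$ in degree $4$ gives $\disc \hat{\OO}_L^{D_4} \neq \disc \hat{\tilde{\OO}}_L^{D_4}$.

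As written, then, your argument proves a true but different statement: the splitting type of the normalization of $\PP^1$ in $L^H$ is the union, with multiplicities $\dim (V_\lambda)^H$, of the splitting types of the maximal-order isotypic pieces. Those are not the invariants the theorem is about, and that version would not feed into Theorem~\ref{thm:schreyer.is.scrollar} or Corollary~\ref{cor:volume.formula}. The repair is to run your computation on the sheaf glued from $\OO_L$ and $\OO_{L,\infty}$ (this is a rank-$d!$ $S_d$-equivariant vector bundle, since the $S_d$-closure commutes with base change to $k[t,t^{-1}]$ by Theorem~\ref{thm:Sdclosure.basechange}): the compatibility you flag at the end---that the model defining the $e_{\lambda,i}$ is the same one whose $H$-invariants compute $(\res_H\varphi)_*\OO_{\res_H C}$---then holds by definition, no integral-closure argument is needed, and your formal manipulation becomes exactly the paper's proof.
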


Since these resolvent curves can be singular or reducible (compared to \cite{syzrep}), it is natural to wonder when precisely this happens. As for being non-singular, this is determined representation theoretically.

\begin{proposition}\label{prop:resolvent.maximal}
Let $\varphi: C\to \PP^1$ be a degree $d\geq 4$ morphism over an algebraically closed field $k$ of characteristic zero or larger than $d$, where $C$ is a nice curve. Let $H$ be a subgroup of $S_d$. Then the resolvent curve $\res_H C$ is non-singular if and only if for every ramification pattern $e$ of $\varphi$ and every partition $\lambda$ of $d$ for which $V_\lambda$ appears in $\Ind_{H}^{S_d}\mathbf{1}$ we have one of the following:
\begin{enumerate}
\item $e = (1^d)$ or $e=(2, 1^{d-2})$, or
\item $\lambda = (d)$ or $\lambda = (d-1,1)$, or
\item $\lambda = (a,b)$ for some $a, b$ and $e = (3, 1^{d-3})$.
\end{enumerate}
In particular, the maximal resolvent $\res_1 C$ is non-singular if and only if $C\to \PP^1$ simply branched.
\end{proposition}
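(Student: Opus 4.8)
The plan is to reduce the statement to a local question at the branch points of $\varphi$, and then to a purely representation‑theoretic criterion attached to each pair (ramification pattern, partition).

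Since $\res_H\varphi$ is finite and $\PP^1$ is smooth, $\res_H C$ is automatically non‑singular over every point where the $S_d$‑closure is \'etale, so only the fibres over the branch points of $\varphi$ can carry singularities. Fix such a branch point $p$, let $e$ be its ramification pattern and $\sigma\in S_d$ a generator of the inertia group (of cycle type $e$, by tameness), and pass to the complete local ring $\OO=k[[t]]$. Write $\mathcal L$ for the locally free $\OO_{\PP^1}$‑algebra underlying the $S_d$‑closure, so that $\res_H C=\Spec_{\PP^1}\mathcal L^H$; since $C$ is nice, $\mathcal L_\OO$ is the Bhargava--Satriano closure of $\prod_i k[[t^{1/e_i}]]$ and hence depends only on $e$. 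Non‑singularity of $\res_H C$ at $p$ is equivalent to $\mathcal L^H_\OO$ being integrally closed, i.e.\ to the vanishing of the normalization defect $\widetilde{\mathcal L^H_\OO}/\mathcal L^H_\OO$.

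Next I would decompose this defect isotypically. The normalization is $S_d$‑equivariant and $\mathcal L_\OO$ is $S_d$‑stable, so the defect $D:=\widetilde{\mathcal L_\OO}/\mathcal L_\OO$ of the full closure is a finite‑length $S_d$‑module, which I split as $D=\bigoplus_{\lambda\vdash d}D_\lambda$ into isotypic pieces. Because $\charac k=0$ or $>d$, the order $|H|$ is invertible in $k$, so taking $H$‑invariants is exact and commutes with normalization; hence $\widetilde{\mathcal L^H_\OO}=(\widetilde{\mathcal L_\OO})^H$ and $\widetilde{\mathcal L^H_\OO}/\mathcal L^H_\OO=\bigoplus_\lambda D_\lambda^H$. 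As each $D_\lambda$ is $V_\lambda$‑isotypic, $D_\lambda^H\neq 0$ exactly when $D_\lambda\neq 0$ and $V_\lambda^H\neq 0$, the latter meaning $V_\lambda$ occurs in $\Ind_H^{S_d}\mathbf 1$. Therefore $\res_H C$ is non‑singular if and only if, for every branch point (with pattern $e$) and every $\lambda$ occurring in $\Ind_H^{S_d}\mathbf 1$, the local isotypic defect $D_\lambda=D_\lambda(e)$ vanishes; this is already the shape of the claimed criterion, so it remains to prove $D_\lambda(e)=0\iff(1),(2)\text{ or }(3)$. The easy directions come first: if $e=(1^d)$ the closure is \'etale and $D=0$, giving (1); if $\lambda=(d)$ then $D_{(d)}=0$ since this component is the constants $\OO$, giving the trivial case of (2); and for $\lambda=(d-1,1)$ I would invoke that $\res_{S_{d-1}}C=C$ is nice, so since $\Ind_{S_{d-1}}^{S_d}\mathbf 1=V_{(d)}\oplus V_{(d-1,1)}$ and $D_{(d)}=0$, non‑singularity of $C$ forces $D_{(d-1,1)}(e)=0$ at every branch point, giving the rest of (2). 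The substance is the remaining computation: passing to the closed fibre, compute the $\langle\sigma\rangle$‑module structures of $\mathcal L_\OO$ and of $\widetilde{\mathcal L_\OO}=\prod k[[t^{1/N}]]$ with $N=\ord\sigma$, and read off $D_\lambda$ from the eigenvalues of $\sigma$ on $V_\lambda$ together with the ramification filtration prescribed by the Bhargava--Satriano construction.

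The main obstacle is precisely this local eigenvalue/valuation bookkeeping, and above all pinning down the sharp boundary in case (3). I expect the outcome to be that $D_\lambda(e)$ vanishes exactly when the eigenvalue multiplicities of $\sigma$ on $V_\lambda$ are as balanced as the ramification permits: for a transposition $e=(2,1^{d-2})$ this balance holds for every $\lambda$, yielding (1); for a single $3$‑cycle $e=(3,1^{d-3})$ it holds exactly for two‑row $\lambda=(a,b)$ and fails the moment $\lambda$ acquires a third row, yielding (3); and for every other pattern (a cycle of length $\ge 4$, or at least two non‑trivial cycles) it fails for all $\lambda\notin\{(d),(d-1,1)\}$. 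Showing both halves of (3) simultaneously — that a two‑row partition never produces a defect against a $3$‑cycle, and that a $3$‑cycle does produce one against every partition with three or more rows — is the delicate point requiring the explicit local model of the closure. Granting the criterion, the final assertion is immediate: for $H=1$ one has $\Ind_1^{S_d}\mathbf 1=k[S_d]$, so every $\lambda$ occurs; choosing any $\lambda$ with at least three rows (which exists as $d\ge 4$, e.g.\ $(d-2,1,1)$) excludes cases (2) and (3), so non‑singularity of $\res_1 C$ forces every ramification pattern into $\{(1^d),(2,1^{d-2})\}$, i.e.\ $\varphi$ is simply branched; conversely simple branching places every branch point in case (1), so $\res_1 C$ is non‑singular.
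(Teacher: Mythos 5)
Your structural reduction is sound and runs parallel to the paper's own strategy: the paper also localizes at the branch points, works with the completed closure $\hat{\OO}_L = \bigotimes_i G(k[[t^{1/e_i}]]/k[[t]])$, detects singularity of $\res_H C$ through the failure of $\OO_L^H$ to be integrally closed, and uses the isotypic decomposition together with Frobenius reciprocity ($V_\lambda^H \neq 0$ iff $V_\lambda$ occurs in $\Ind_H^{S_d}\mathbf{1}$) to reduce everything to a condition on pairs $(e,\lambda)$. Your treatment of the easy cases ($e=(1^d)$ \'etale, $\lambda=(d)$ the constants, $\lambda=(d-1,1)$ via niceness of $C$) is also fine, as is the deduction of the final assertion about $\res_1 C$ from the criterion.

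However, there is a genuine gap exactly where the mathematical content of the proposition sits: you never prove the equivalence $D_\lambda(e)=0 \iff (1),(2)\text{ or }(3)$; you only describe what you ``expect the outcome to be.'' This is not routine eigenvalue bookkeeping, and the paper needs three substantial inputs to establish it. First, one must compute the discriminant of the \emph{non-maximal} order $\OO_\lambda$ (equivalently of $\OO_L^H$), namely $\disc\OO_\lambda = (\disc\OO_K)^{p(\lambda)\dim V_\lambda}$ (Lemma~\ref{lem:discr.formula.lambda.part}); this requires an explicit $k[t]$-basis of each isotypic piece of the Bhargava--Satriano closure $G(k[t^{1/e}]/k[t])$, which is precisely what the higher Specht polynomial machinery (Lemma~\ref{lem:basis.Se.closure.k[t1/e]}) provides -- your proposal has no substitute for controlling the structure of $\mathcal{L}_\OO$ itself, which you defer to ``the ramification filtration prescribed by the Bhargava--Satriano construction.'' Second, one must compute the discriminant of the \emph{maximal} order $\tilde{\OO}_L^H$ via the Lenstra--Pila--Pomerance formula (Lemma~\ref{lem:disc.maximal.order.resolvent}); comparing the two discriminants converts defect-vanishing into the character identity $(\dim V_\lambda - \chi_\lambda(e^i))/(\dim V_\lambda - \chi_\lambda((12))) = (d-\#\mathrm{Fix}(e^i))/2$ for \emph{every} power $e^i$ of the inertia generator (Lemma~\ref{lem:resolvent.maximal.lem}) -- note that your local defect silently involves these powers as well. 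Third, the classification of when this identity holds is Lemma~\ref{lem:repr.lemma.for.maximality}, a delicate inequality with equality analysis proved by a multi-layered induction using the Murnaghan--Nakayama rule; that is where the sharp boundary in your case (3) -- two-row partitions surviving a $3$-cycle while any partition with a third row fails -- is actually established. Your ``balanced eigenvalue multiplicities'' heuristic is a restatement of the expected answer, not an argument, so without these three inputs (or genuine substitutes) the proposition is not proved.
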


The case (1) in the above proposition corresponds to no ramification or simple branching, while (2) simply expresses the fact that our starting curve $C$ is non-singular. One can think of (3) as classifying the exceptional cases when a resolvent curve is non-singular.

Whether a resolvent curve is reducible or not depends only on the Galois theory of $C\to \PP^1$.

\begin{proposition}\label{prop:resolvent.irre}
Let $\varphi: C\to \PP^1$ be a degree $d\geq 4$ morphism over a field $k$ of characteristic zero or larger than $d$, where $C$ is a nice curve. Let $H$ be a subgroup of $S_d$. Let $G$ be the Galois group of the Galois closure of $k(C)/k(t)$, considered as a subgroup of $S_d$. Then the resolvent curve $\res_H C$ is irreducible (over $k$) if and only if 
\[
GH = S_d.
\]
\end{proposition}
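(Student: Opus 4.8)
The plan is to translate the statement into the Grothendieck--Galois correspondence for étale algebras over $k(t)$ and then reduce everything to a transitivity question about a coset space. Write $\Gamma = \Gal(k(t)^{\sep}/k(t))$ for the absolute Galois group. Finite étale $k(t)$-algebras correspond contravariantly to finite $\Gamma$-sets, under which field factors correspond to $\Gamma$-orbits; in particular an étale algebra is a field (equivalently, the associated curve is irreducible over $k$) if and only if its $\Gamma$-set is a single orbit. Since $\res_H C$ is by definition the curve with function field $L^H$, the first step is to record that $\res_H C$ is irreducible over $k$ if and only if $L^H$ is a field, i.e. if and only if the $\Gamma$-set attached to $L^H$ is transitive.

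Next I would identify the $\Gamma$-set attached to the $S_d$-closure $L$. Let $X$ be the $\Gamma$-set of size $d$ corresponding to $K$; it is transitive because $C$ is integral, and the associated action map $\rho\colon \Gamma \to \Sym(X)\cong S_d$ has image exactly $G$ by definition of the Galois closure. The key point is that the $\Gamma$-set of $L$ is the torsor of labelings $Y = \{\text{bijections } \{1,\dots,d\}\to X\}$, on which $\Gamma$ acts on the left (through its action on $X$) and $S_d$ acts on the right (by permuting $\{1,\dots,d\}$), these two actions commuting and $Y$ being a free $S_d$-set of size $d!$. Fixing a base labeling $f_0\in Y$ identifies $Y$ with $S_d$ as a set in such a way that $\Gamma$ acts by left multiplication through $\rho$ (image $G$) and $H\subseteq S_d$ acts by right multiplication. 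This is consistent with $L$ being the regular representation of $S_d$ and with $L^{S_{d-1}} = K$.

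Under this identification the subalgebra $L^H$ corresponds to the $\Gamma$-set $Y/H$ of $H$-orbits, which is the coset space $S_d/H$ equipped with the residual left $\Gamma$-action given by left multiplication by $G$. Hence $L^H$ is a field if and only if $G$ acts transitively on $S_d/H$. The orbit of the coset $H$ under $G$ is $\{gH : g\in G\}$, and this exhausts $S_d/H$ precisely when every $\tau\in S_d$ lies in $GH$, that is, when $GH = S_d$. Combining this with the first step yields the desired equivalence.

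The main obstacle is the second step: verifying that the $\Gamma$-set of the Bhargava--Satriano $S_d$-closure is indeed the labeling torsor $Y$ with the stated commuting actions. I expect this to follow from the defining properties of the $S_d$-closure recalled in the introduction, namely that $L$ is an étale algebra carrying a commuting $S_d$-action, isomorphic as an $S_d$-representation to the regular representation (so the $S_d$-action on $Y$ is free of the right cardinality), and satisfying $L^{S_{d-1}} = K$ (which pins down $Y/S_{d-1} = X$ and hence the identification of the $\Gamma$-action with $\rho$). A secondary point to check is that working over a possibly non-algebraically-closed $k$ causes no trouble: since we use $\Gamma = \Gal(k(t)^{\sep}/k(t))$, irreducibility over $k$ corresponds exactly to transitivity of the $\Gamma$-set, so the arithmetic of $k$ is already encoded in $G = \im(\rho)$.
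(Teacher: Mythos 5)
Your proposal is correct, but it runs along a genuinely different track from the paper's proof, so let me compare the two. The paper stays entirely on the algebra side: it quotes the Bhargava--Satriano theorem that $L \cong \Ind_G^{S_d} M$ as $k(t)[S_d]$-algebras (where $M$ is the Galois closure and $r = [S_d:G]$), makes the $S_d$-action on $\bigoplus_{i=1}^{r} s_i M$ explicit in terms of coset representatives, and then argues with elements: if $H$ acts transitively on $S_d/G$ by left translation (equivalent to $GH=S_d$ by taking inverses), then every nonzero element of $L^H$ has all of its $M$-coordinates Galois-conjugate to one another and is therefore invertible, while in the intransitive case the averaged element $\sum_{h\in H} h\cdot (s_i\cdot 1)$ is a nonzero zero-divisor in $L^H$. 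You instead pass through the Grothendieck--Galois correspondence, identify the $\Gamma$-set of $L$ with the torsor of labelings, and reduce to transitivity of $G$ on $S_d/H$ --- the mirror image of the paper's condition. Your route buys a computation-free finish and yields, with no extra work, that the irreducible components of $\res_H C$ correspond to the $G$-orbits on $S_d/H$ (the paper proves the mirror statement separately, in the remark following its proof). The cost is concentrated in the step you flagged, and there I would adjust your plan: the property $L^{S_{d-1}}=K$ is \emph{not} among the facts recalled in the paper, and for the Bhargava--Satriano closure it is itself most naturally proved from the isomorphism $L\cong\Ind_G^{S_d}M$; so rather than axiomatizing (\'etale, regular representation, $L^{S_{d-1}}=K$), you should invoke that isomorphism directly (it is \cite[Thm.\,2]{bhargavasatriano}, already cited in the paper), from which the $\Gamma$-set of $L$ is immediately the induced torsor, i.e.\ $S_d$ with $\Gamma$ acting by left translation through $\rho$ and $S_d$ acting by right translation. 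Finally, note that any such identification is canonical only up to conjugating $\rho$ (a choice of base labeling); this is harmless because transitivity of $G$ on $S_d/H$, equivalently $GH=S_d$, is unchanged when $G$ is replaced by a conjugate, as the paper observes in the discussion after the statement of the proposition --- but a complete write-up should say this explicitly.
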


At first sight, it looks like this condition depends on the choice of injection $G\subset S_d$, i.e.\ different choices will lead to conjugate subgroups. And indeed, replacing $G$ by a conjugate subgroup might change the size of the set $GH$. However, if $GH = S_d$ already, then this also holds for every conjugate of $G$.

The applications of~\cite{syzrep} also carry over to this more general setting. In particular, we obtain new bounds on the splitting types. In general, not much is known about the the range in which the splitting types can live, see e.g.\ ~\cite{deopurkar_patel, bopphoff, bopp} for some results around this problem. In fact, even understanding the range in which the scrollar invariants can live is not easy, and much work relates to this question~\cite{bujokas_patel, coppensmartens, sameera, ohbuchi, peikertrosen, ballico, deopurkar_patel_bundles, landesmanlitt}. By studying Maroni bounds on resolvent curves, we obtain some general range in which the splitting types can live. Here is an exemplar statement for degree $4$. We refer to Corollary~\ref{cor:maroni.bound.for.schreyer} for more general bounds in higher degree.

\begin{corollary}\label{cor:bound.splitting.types.degree.4}
Let $C\to \PP^1$ be a degree $4$ morphism over a field of characteristic not $2$ or $3$, where $C$ is a nice curve of genus $g$. Assume that the Galois closure of the corresponding function field extension $K/k(t)$ has Galois group $S_d$ or $A_d$. Then the first splitting types $b_1, b_2$ of the Casnati--Ekedahl resolution of $C\to \PP^1$ satisfy
\[
b_i \leq \frac{2}{3}g+2.
\]
\end{corollary}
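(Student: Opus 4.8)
The plan is to realise the pair $b_1, b_2$ as the scrollar invariants of a single integral degree $3$ resolvent curve and then apply a Maroni-type bound to that curve. Since $d = 4$, the only relevant index in Theorem~\ref{thm:schreyer.is.scrollar} is $i = 1$, and the associated partition is $(d-i-1, 2, 1^{i-1}) = (2,2)$; thus $b_1 \le b_2$ are exactly the two scrollar invariants of $(2,2)$. Let $H = D_4 \le S_4$ be a Sylow $2$-subgroup, realised as the stabiliser of a partition of $\{1,2,3,4\}$ into two pairs. It has index $3$, and the action of $S_4$ on the three such pairings is the surjection $S_4 \to S_3$ whose kernel is the Klein four-group. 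Hence $\Ind_{D_4}^{S_4}\mathbf{1}$ is the pullback of the permutation representation of $S_3$ on three points, which is $V_{(4)}\oplus V_{(2,2)}$. By Theorem~\ref{thm:scrollar.invs.resolvent} the resolvent curve $\res_{D_4} C\to \PP^1$ is therefore a degree $3$ cover whose multiset of scrollar invariants is exactly $\{b_1, b_2\}$ (the trivial partition $(4)$ contributes nothing, and $V_{(2,2)}$ occurs with multiplicity one).

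First I would check that $\res_{D_4} C$ is a genuine integral curve. Reducedness is automatic, since $\res_{D_4}C$ corresponds to the \'etale subalgebra $L^{D_4}$ of the \'etale algebra $L$. For irreducibility I invoke Proposition~\ref{prop:resolvent.irre}: the hypothesis $G\in\{S_4,A_4\}$ together with the fact that $D_4\not\subseteq A_4$ (it contains transpositions) forces $G\cdot D_4 = S_4$, so $\res_{D_4}C$ is irreducible. This is precisely where the Galois-group assumption is used. Next I would pin down the sum $b_1+b_2$ via the volume formula of Corollary~\ref{cor:volume.formula} applied to $\lambda=(2,2)$, which gives $b_1+b_2 = g+3$, equivalently that the arithmetic genus of $\res_{D_4}C$ equals $g+1$. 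As a sanity check, in the simply branched case $S_4\to S_3$ sends a transposition to a transposition, so the cubic resolvent is again simply branched with $2g+6$ branch points, and Riemann--Hurwitz returns genus $g+1$.

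The remaining, and main, input is a Maroni bound for $\res_{D_4}C$. The difficulty is that this resolvent is typically singular: by Proposition~\ref{prop:resolvent.maximal} it is smooth only under severe restrictions on the ramification, so the classical Maroni inequality for nice curves does not apply verbatim. I would instead argue geometrically on a Hirzebruch surface. By the structure theory of triple covers, the integral curve $\res_{D_4}C$ embeds relative-canonically as a trisection $X$ in $\PP(\EE)$ with $\EE = \OO_{\PP^1}(b_1)\oplus\OO_{\PP^1}(b_2)$, whose negative section $\sigma$ has class $H - b_2 R$ and self-intersection $b_1-b_2$; here $X$ lies in the class $3H - (b_1+b_2)R$. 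A direct intersection computation using $H^2 = b_1+b_2$, $HR=1$ and $R^2=0$ gives $X\cdot\sigma = 2b_1 - b_2$. Since $X$ is integral and distinct from the section $\sigma$, it cannot contain $\sigma$ as a component, so $X\cdot\sigma\ge 0$ and therefore $b_2\le 2b_1$; this is exactly the Maroni bound $b_2\le (2p_a+4)/3$ for the cubic. Combining it with $b_1+b_2=g+3$ yields $b_2\le 2b_1 = 2(g+3-b_2)$, that is $b_2\le \tfrac23 g + 2$, and $b_1\le b_2$ gives the same bound for $b_1$. The hard part is thus this last step: justifying a Maroni bound for the possibly singular integral resolvent, for which the intersection-theoretic argument above — or equivalently a version of Maroni's theorem for integral Gorenstein triple covers — is the essential device.
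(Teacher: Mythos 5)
Your reduction is the same as the paper's: the paper also realises $b_1,b_2$ as the scrollar invariants of the index-$3$ resolvent $\res_{D_4}C$ via $\Ind_{D_4}^{S_4}\mathbf{1}=V_{(4)}\oplus V_{(2,2)}$ (this is its corollary on Casnati's theorem), obtains irreducibility from Proposition~\ref{prop:resolvent.irre} exactly as you do, and gets $b_1+b_2=g+3$, equivalently $p_a(\res_{D_4}C)=g+1$, from the volume formula. The divergence is in the last, decisive step. The paper's Maroni bound (Theorem~\ref{thm:maroni.bound}) is proved purely algebraically: take a reduced basis $1,\alpha_1,\alpha_2$ of $\OO_L^{D_4}$; since $L^{D_4}$ is a field, the coefficient of $\alpha_2$ in $\alpha_1^2$ cannot vanish (otherwise $\Span_{k(t)}(1,\alpha_1)$ would be a module over the cubic field $k(t)(\alpha_1)$, which is impossible for dimension reasons), and the reduced-basis property then gives $b_2\le 2b_1$ directly. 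No geometry of the singular curve is needed, only that $L^{D_4}$ is a field --- which is exactly what Proposition~\ref{prop:resolvent.irre} supplies.

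Your intersection-theoretic substitute for this step has a genuine gap. The assertion that the integral curve $\res_{D_4}C$ ``embeds relative-canonically as a trisection $X\subset\PP(\EE)$'' is not part of the structure theory of arbitrary triple covers: Miranda and Casnati--Ekedahl give such an embedding only for \emph{Gorenstein} triple covers, and non-Gorenstein integral triple covers of $\PP^1$ do exist (glue three points in a fibre of a smooth integral triple cover into a spatial triple point, i.e.\ three branches with independent tangent directions). Such a cover can never be a divisor in the smooth surface $\PP(\EE)$, because any effective divisor on a smooth surface is locally principal, hence Gorenstein; so your inequality $X\cdot\sigma\ge 0$ is simply not available until you prove that $\res_{D_4}C$ is Gorenstein. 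This is true here, but it requires precisely the local analysis you have skipped: comparing $\disc\OO_L^{D_4}=(\disc\OO_K)^{p(D_4)}$ with the discriminant of the maximal order computed in Lemma~\ref{lem:disc.maximal.order.resolvent} shows that at points of ramification type $(2,2)$ or $(4)$ the local order has colength $1$ in the maximal order ($\delta$-invariant $1$), so the singularities are nodes, hence planar, hence Gorenstein, and only then does the Casnati--Ekedahl embedding and your computation $X\cdot\sigma=2b_1-b_2\ge 0$ apply. So either supply that discriminant computation, or replace the step by the paper's algebraic argument, which is shorter and strictly more robust: it yields $b_2\le 2b_1$ for every irreducible resolvent, with no hypothesis on the nature of its singularities.
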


\subsection{Main differences with~\cite{syzrep}}\label{sec:differences}

Let us discuss the main differences with \cite{syzrep}, and the new methods required. Let $\varphi: C\to \PP^1$ be a degree $d\geq 4$ cover of curves over a field $k$ of characteristic $0$ or larger than $d$, where $C$ is a nice curve of genus $g$. Write $K/k(t)$ for the corresponding function field extension and let $L/k(t)$ be the Galois closure of $K/k(t)$. For simplicity let us assume that $L/k(t)$ has $S_d$ as its Galois group. In \cite{syzrep} we used the maximal orders in $L/k(t)$, i.e.\ the integral closures of $k[t]$ and $k[t^{-1}]$ inside $L$, to define and study the scrollar invariants corresponding to partitions $\lambda$ of $d$. However, when the original cover $\varphi$ is not simply branched, then almost all of the results in \cite{syzrep} are no longer true with this definition. For example, there is no simple description of the scrollar invariants of hooks $(d-i, 1^i)$, there is no duality statement connecting the scrollar invariants of a partition $\lambda$ with those of its dual partition $\lambda^*$, and there is no simple formula for the genus of $L$. Most importantly however, is that the volume formula, which computes the sum of the scrollar invariants corresponding to a partition, is no longer true. To get around this problem, it is natural to use non-maximal orders in $L/k(t)$, which hence correspond to singular curves. Indeed, this is already clear from the Recilass trigonal construction on a tetragonal curve, which produces singular or reducible curves in the presence of ramification of type $(4)$ or $(2,2)$~\cite{deopurkar_patel, casnati}. The orders we use come from Bhargava and Satriano's theory of \emph{$S_d$-closure}~\cite{bhargavasatriano}, which gives a formal construction of a Galois closure with Galois group $S_d$. With this formalism, practically all of the results from \cite{syzrep} become valid again. As an added bonus, this framework works perfectly well even if the Galois group of $L/k(t)$ is not all of $S_d$. In that case, our orders (and hence the resolvent curves) may become reducible. 

The main difficulty with this new perspective is to properly study this $S_d$-closure in this context, and in particular prove the volume formula. Moreover, a better understanding of the representation theory of the symmetric group is required for this, compared to~\cite{syzrep}. In particular we need results about higher Specht polynomials~\cite{higher_specht_1, higher_specht_2}.

Let us also mention that there is likely an alternative approach possible to generalizing~\cite{syzrep} beyond the simply branched case, based on attaching to each partition $\lambda\vdash d$ the parabolic vector bundle $\mathcal{E}_\lambda$ associated to the representation $V_\lambda$ of $\pi_1^{\mathrm{geom}}(\PP^1\setminus \text{ branch locus of }\varphi)$ via the Mehta--Seshadri correspondence~\cite{MS}. The generalized scrollar invariants of $\lambda$ are precisely the splitting types of $\mathcal{E}_\lambda$. We refer to~\cite{landesmanlitt, landesmanlitt2} for more details. This article sticks closer to the treatment from~\cite{syzrep}.

\subsection{Contents} In section~\ref{sec:prelims} we discuss the preliminaries for this article. We treat some representation theory of $S_d$, in particular on higher Specht polynomials~\cite{higher_specht_1, higher_specht_2}. We also discuss the $S_d$-closure~\cite{bhargavasatriano}. We advise the reader to skip this section on a first reading, and refer back when needed. In section~\ref{sec:scrollars} we define and study the scrollar invariants attached to partitions. One of the main results is the volume formula, Corollary~\ref{cor:volume.formula}. We also prove Proposition~\ref{prop:resolvent.maximal} and~\ref{prop:resolvent.irre} in this section. In section~\ref{sec:resolution} we show how one can construct a certain minimal free resolution Galois-theoretically, and use this to prove Theorem~\ref{thm:schreyer.is.scrollar}. This section is completely analogous to~\cite[Sec.\,4, 6]{syzrep} but replacing the Galois closure by the $S_d$-closure. As such, this section is rather short, and we refer to~\cite[Sec.\,4, 6]{syzrep} for a more extensive treatment. Finally, in section~\ref{sec:examples} we give some applications and examples of this work. Applying the Maroni bound to resolvent curves gives some general bounds on splitting types, in particular proving Corollary~\ref{cor:bound.splitting.types.degree.4}. See Corollary~\ref{cor:maroni.bound.for.schreyer} for a more general statement in every degree involving all splitting types. As for examples, we discuss the most important resolvents in degree $d=4,5,6$. These are the Lagrange resolvent (which geometrically corresponds to the Recillas' trigonal construction), Cayley's sextic resolvent and the exotic resolvent. For each of these, we give a rather complete picture of its geometry, including its non-singular model and the ramification.

\subsection{Acknowledgements} The author thanks Wouter Castryck and Yongqiang Zhao for valuable discussions. The author thanks Mathias Stout for thinking about Propositions~\ref{prop:resolvent.maximal} and~\ref{prop:resolvent.irre}. The author is supported by the Research Foundation -- Flanders (FWO) with grant number 11F1921N.

\section{Preliminaries}\label{sec:prelims}

In this section we recall some representation theoretic results we will need, as well as recalling the $S_d$-closure~\cite{bhargavasatriano} and its properties.

\subsection{Representation theory}\label{sec:repr.theory}

Throughout this section, we fix an integer $d$ and a field $F$ of characteristic zero or larger than $d$. For $2\leq i\leq d-2$ we define the partition $\lambda_i = (d-i, 2, 1^{i-2-1})$ of $d$. We extend this notation by $\lambda_0 = (d), \lambda_1 = (d-1, 1), \lambda_d = (1^d)$ (but we leave $\lambda_{d-1}$ undefined). For any representation $V$, we will denote the corresponding character by $\chi_V$. For the representations $V_\lambda$ of the symmetric group, we use the simpler notation $\chi_\lambda$ instead. If $e$ is another partition of $d$, then by $\chi_\lambda(e)$ we denote the value $\chi_\lambda(\rho)$ where $\rho\in S_d$ is any permutation with cycle type $e$.

We first recall some representation theoretic lemmas from~\cite{syzrep}. No proofs are given, since all proofs can be found in~\cite{syzrep}, or in standard books on representation theory such as~\cite{fultonharris, sagan, hamermesh, serre_repr}.

\begin{lemma}\label{lem: induced and fixed space} 
Let $V$ be an irreducible representation of $S_d$ and let $H \subseteq S_d$ be a subgroup. Then
$\dim V^H = \mult (V, \Ind_H^{S_d} \mathbf{1})$.
\end{lemma}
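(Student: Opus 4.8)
The plan is to prove that $\dim V^H = \mult(V, \Ind_H^{S_d}\mathbf{1})$ via Frobenius reciprocity, which is the standard and cleanest route. The key observation is that the space of $H$-invariants $V^H$ is exactly $\Hom_H(\mathbf{1}, \Res_H^{S_d} V)$: an $H$-equivariant map from the trivial representation $\mathbf{1}$ to $\Res_H^{S_d} V$ is determined by where it sends a basis vector of $\mathbf{1}$, and $H$-equivariance forces the image to be an $H$-fixed vector, giving a natural isomorphism $V^H \cong \Hom_H(\mathbf{1}, \Res_H^{S_d} V)$. Hence $\dim V^H = \dim \Hom_H(\mathbf{1}, \Res_H^{S_d} V)$.

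From here I would apply Frobenius reciprocity, which asserts the natural isomorphism
\[
\Hom_H(\mathbf{1}, \Res_H^{S_d} V) \cong \Hom_{S_d}(\Ind_H^{S_d}\mathbf{1}, V).
\]
Taking dimensions, the right-hand side computes the multiplicity of $V$ inside $\Ind_H^{S_d}\mathbf{1}$: since we are in characteristic zero or larger than $d$, the group algebra $F[S_d]$ is semisimple (the order $|S_d| = d!$ is invertible in $F$, as every prime dividing $d!$ is at most $d$), so Maschke's theorem applies and every representation decomposes as a direct sum of irreducibles. For an irreducible $V$, Schur's lemma then gives $\dim \Hom_{S_d}(\Ind_H^{S_d}\mathbf{1}, V) = \mult(V, \Ind_H^{S_d}\mathbf{1})$. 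Chaining these isomorphisms yields the claim.

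I do not anticipate a genuine obstacle here, as this is a foundational identity; the only point requiring a little care is the semisimplicity hypothesis. In the classical setting one works over $\CC$ or at least a field of characteristic zero, whereas here the field $F$ may have positive characteristic larger than $d$. The essential check is that $|S_d|$ is a unit in $F$, which holds precisely because $\charac F = 0$ or $\charac F > d$ guarantees that no prime factor of $d!$ vanishes in $F$. Once semisimplicity is secured, Maschke, Schur, and Frobenius reciprocity all hold over $F$ in the usual form, and the argument goes through verbatim. An alternative, purely character-theoretic proof would compute $\dim V^H = \frac{1}{|H|}\sum_{h\in H}\chi_V(h)$ and compare with the inner product $\langle \chi_V, \Ind_H^{S_d}\mathbf{1}\rangle = \frac{1}{|H|}\sum_{h\in H}\chi_V(h)$ directly, but this again relies on the same invertibility of $|H|$ and $|S_d|$ in $F$, so it offers no real advantage; I would favour the Frobenius reciprocity argument for its transparency.
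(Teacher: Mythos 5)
Your proof is correct, and it is essentially the standard argument: the paper itself gives no proof of this lemma, deferring to \cite{syzrep} and standard references, where the identification $V^H \cong \Hom_H(\mathbf{1}, \Res_H^{S_d} V)$ followed by Frobenius reciprocity and semisimplicity (valid since $\charac F = 0$ or $\charac F > d$ makes $d!$ invertible) is exactly the proof one finds. The only point you leave implicit is that $\dim\Hom_{S_d}(\Ind_H^{S_d}\mathbf{1}, V) = \mult(V, \Ind_H^{S_d}\mathbf{1})$ requires $\End_{S_d}(V) = F$, which holds here because the irreducible representations of $S_d$ (the Specht modules) are absolutely irreducible over any such field.
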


\begin{lemma}\label{lem: multiplicity fixed}
For any $2\leq i\leq d-2$ we have $\dim \left(V_{\lambda_i}^{S_{\lambda_i}}\right)=1$.
\end{lemma}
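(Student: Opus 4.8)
The plan is to convert the dimension of the fixed space into a representation-theoretic multiplicity and then read that multiplicity off combinatorially, so that the whole statement reduces to the classical fact that the diagonal Kostka number equals $1$. First I would apply Lemma~\ref{lem: induced and fixed space} with $V = V_{\lambda_i}$ and $H = S_{\lambda_i}$ the Young subgroup attached to $\lambda_i$. This immediately gives
\[
\dim\!\left(V_{\lambda_i}^{S_{\lambda_i}}\right) = \mult\!\left(V_{\lambda_i},\, \Ind_{S_{\lambda_i}}^{S_d}\mathbf{1}\right),
\]
so it suffices to show that $V_{\lambda_i}$ occurs with multiplicity exactly one in the induced representation $\Ind_{S_{\lambda_i}}^{S_d}\mathbf{1}$.

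Next, I would recognize $\Ind_{S_{\lambda_i}}^{S_d}\mathbf{1}$ as the Young permutation module $M^{\lambda_i}$ associated to the partition $\lambda_i$. By Young's rule, such a permutation module decomposes into Specht modules as $M^{\mu} \cong \bigoplus_{\nu} V_\nu^{\oplus K_{\nu\mu}}$, where $K_{\nu\mu}$ is the Kostka number counting semistandard Young tableaux of shape $\nu$ and content $\mu$. Applying this with $\mu = \nu = \lambda_i$ reduces the claim to the single identity $K_{\lambda_i \lambda_i} = 1$.

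Finally, I would verify that $K_{\lambda\lambda} = 1$ for every partition $\lambda$, which is the only genuinely combinatorial point. Existence of one such tableau is clear: fill the $j$-th row entirely with the entry $j$, which is weakly increasing along rows and strictly increasing down columns and manifestly has content $\lambda$. For uniqueness I would argue by induction on the number of rows: column-strictness forces every copy of the symbol $1$ into the top row, and since the content provides exactly $\lambda_1$ ones for the $\lambda_1$ boxes of that row, the first row consists solely of $1$'s; deleting it leaves a semistandard tableau of shape $(\lambda_2,\lambda_3,\dots)$ and content $(\lambda_2,\lambda_3,\dots)$ in the entries $\geq 2$, to which the same argument applies. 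This yields $K_{\lambda_i\lambda_i}=1$ and hence the lemma. I do not expect any real obstacle here; the statement is an instance of the standard fact $K_{\lambda\lambda}=1$, equivalently that $V_\lambda$ is the unique top constituent of $M^\lambda$ and appears there with multiplicity one, and the only thing requiring care is the short uniqueness induction above.
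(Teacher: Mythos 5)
Your proof is correct: the reduction via Lemma~\ref{lem: induced and fixed space} to $\mult\bigl(V_{\lambda_i}, \Ind_{S_{\lambda_i}}^{S_d}\mathbf{1}\bigr)$, followed by Young's rule and the identity $K_{\lambda\lambda}=1$ (with your column-strictness induction), is exactly the standard argument. The paper itself gives no proof of this lemma, deferring to~\cite{syzrep} and standard references, and what you have written is precisely the argument those sources supply, so there is nothing to flag.
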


\begin{lemma} \label{lem: tensorstandard}
 Consider a partition $\lambda \vdash d$. Then
  \[ V_\lambda \otimes V_{(d-1,1)} \cong \bigoplus_{\mu \vdash d} V_\mu^{c_{\mu \lambda} - \delta_{\mu \lambda}}, \]
  where $c_{\mu \lambda}$ equals the number of ways of transforming $\mu$ into $\lambda$ by removing a cell and adding a cell; here $\delta_{\mu \lambda}$ denotes the Kronecker delta.
\end{lemma}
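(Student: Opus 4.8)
The plan is to realize $V_{(d-1,1)}$ inside the natural permutation representation and then reduce the computation to the branching rule for $S_d$. Write $M = F^d$ for the permutation module on which $S_d$ acts by permuting coordinates; it is well known that $M \cong \mathbf{1} \oplus V_{(d-1,1)}$, so that in the representation ring one has $[V_{(d-1,1)}] = [M] - [\mathbf{1}]$. Consequently $V_\lambda \otimes V_{(d-1,1)} \cong (V_\lambda \otimes M) \ominus V_\lambda$, and it suffices to decompose $V_\lambda \otimes M$ and then subtract one copy of $V_\lambda$.

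To decompose $V_\lambda \otimes M$, I would use that $M \cong \Ind_{S_{d-1}}^{S_d} \mathbf{1}$, where $S_{d-1}$ is the stabilizer of a point. The projection formula (tensor identity) for induced representations gives
\[
V_\lambda \otimes M \cong V_\lambda \otimes \Ind_{S_{d-1}}^{S_d} \mathbf{1} \cong \Ind_{S_{d-1}}^{S_d}\left( \Res_{S_{d-1}}^{S_d} V_\lambda \right).
\]
Now the branching rule for the symmetric group describes both operations on the right combinatorially: $\Res_{S_{d-1}}^{S_d} V_\lambda \cong \bigoplus_\nu V_\nu$, the sum over all $\nu \vdash d-1$ obtained from $\lambda$ by removing a single cell, and dually $\Ind_{S_{d-1}}^{S_d} V_\nu \cong \bigoplus_\mu V_\mu$, the sum over all $\mu \vdash d$ obtained from $\nu$ by adding a single cell. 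Composing, the multiplicity of $V_\mu$ in $V_\lambda \otimes M$ is exactly the number of partitions $\nu \vdash d-1$ that arise from $\lambda$ by deleting a cell and from which $\mu$ arises by adding a cell—that is, precisely $c_{\mu\lambda}$.

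Thus $V_\lambda \otimes M \cong \bigoplus_{\mu \vdash d} V_\mu^{c_{\mu\lambda}}$, and subtracting one copy of $V_\lambda$ yields the claimed formula, with the multiplicity of $V_\mu$ equal to $c_{\mu\lambda} - \delta_{\mu\lambda}$. The only point that needs a small check is that this subtraction leaves nonnegative multiplicities: for $\mu = \lambda$ the count $c_{\lambda\lambda}$ equals the number of removable corners of $\lambda$ (remove a corner cell and add it back), which is at least one, so $c_{\lambda\lambda} - 1 \geq 0$. I expect the main conceptual input to be the branching rule, though it is entirely standard; the only genuinely delicate step is matching the delete-then-add bookkeeping with the symmetric combinatorial description of $c_{\mu\lambda}$, and noting that reading the intermediate $\nu$ from either $\lambda$ or $\mu$ gives the same count, which is what makes the formula symmetric in the two partitions.
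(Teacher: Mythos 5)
Your proof is correct. Note that the paper itself gives no argument for this lemma: it is stated in the preliminaries with the remark that all proofs can be found in the predecessor paper \cite{syzrep} or in standard references, so there is no in-paper proof to compare against. Your route—writing $V_{(d-1,1)} = \Ind_{S_{d-1}}^{S_d}\mathbf{1} \ominus \mathbf{1}$, applying the projection formula $V_\lambda \otimes \Ind_{S_{d-1}}^{S_d}\mathbf{1} \cong \Ind_{S_{d-1}}^{S_d}\Res_{S_{d-1}}^{S_d} V_\lambda$, and then invoking the branching rule in both directions—is exactly the standard argument those references use, and all the steps check out: the intermediate partitions $\nu \vdash d-1$ with $\lambda/\nu$ and $\mu/\nu$ single cells are counted once each, which matches $c_{\mu\lambda}$ and makes the symmetry in $\mu$ and $\lambda$ transparent; the subtraction of one copy of $V_\lambda$ is legitimate since $c_{\lambda\lambda}$ equals the number of removable corners of $\lambda$, hence is at least $1$; and semisimplicity (characteristic $0$ or larger than $d$) ensures the virtual-character identity lifts to an actual isomorphism of representations.
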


\begin{lemma} \label{lem: tensornotsostandard}
  Consider a partition $\lambda \vdash d$. Then
  \[ V_{\lambda} \otimes V_{(d-2,2)} \cong \bigoplus_{\mu \vdash d} V_\mu^{-c_{\mu \lambda} + \frac{1}{2} (d_{\mu \lambda} + e_{\mu \lambda} - e'_{\mu \lambda} ) }, \]
  where $c_{\mu \lambda}$, $d_{\mu \lambda}$, $e_{\mu \lambda}$, $e'_{\mu \lambda}$ denote
  the number of ways of transforming $\mu$ into $\lambda$ by 
  \begin{itemize}
    \item removing a cell and adding a cell,
    \item consecutively removing two cells and consecutively adding two cells,
    \item removing two horizontally adjacent cells and adding two horizontally adjacent cells, or removing two vertically adjacent cells and adding two vertically adjacent cells,
    \item removing two horizontally adjacent cells and adding two vertically adjacent cells, or removing two vertically adjacent cells and adding two horizontally adjacent cells,
  \end{itemize}
  respectively.
\end{lemma}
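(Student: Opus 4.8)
The plan is to reduce the computation of this internal tensor product to a short composition of restriction and induction functors between $S_d$ and its Young subgroups, where everything is controlled by the classical branching rule and Pieri's rule. Over a field $F$ of characteristic $0$ or larger than $d$ the group algebra $FS_d$ is split semisimple, so the decomposition may be carried out in the representation ring exactly as over $\CC$. The starting point is the identity, valid in the representation ring,
\[
[V_{(d-2,2)}] = [M^{(d-2,2)}] - [M^{(d-1,1)}],
\]
where $M^{(d-2,2)} = \Ind_{S_{d-2}\times S_2}^{S_d}\mathbf1$ and $M^{(d-1,1)} = \Ind_{S_{d-1}}^{S_d}\mathbf1$ are the permutation modules on unordered pairs and on points; this follows from Young's rule, since $M^{(d-2,2)}\cong V_{(d)}\oplus V_{(d-1,1)}\oplus V_{(d-2,2)}$ and $M^{(d-1,1)}\cong V_{(d)}\oplus V_{(d-1,1)}$. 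By the projection formula $V_\lambda\otimes\Ind_H^{S_d}\mathbf1\cong\Ind_H^{S_d}\Res_H V_\lambda$, computing $V_\lambda\otimes V_{(d-2,2)}$ then amounts to computing $\Ind\Res$ over $S_{d-1}$ and over $S_{d-2}\times S_2$ and taking the difference.

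The $S_{d-1}$ term reproduces the coefficient $c_{\mu\lambda}$ of Lemma~\ref{lem: tensorstandard}: restricting to $S_{d-1}$ removes a cell and inducing back adds one, so $[V_\lambda\otimes M^{(d-1,1)}] = \sum_\mu c_{\mu\lambda}[V_\mu]$. For the $S_{d-2}\times S_2$ term I would use the two-row branching rule $\Res_{S_{d-2}\times S_2}V_\lambda = \bigoplus_\alpha\big((V_\alpha\boxtimes V_{(2)})^{c^\lambda_{\alpha,(2)}}\oplus(V_\alpha\boxtimes V_{(1,1)})^{c^\lambda_{\alpha,(1,1)}}\big)$, where by Pieri's rule $c^\lambda_{\alpha,(2)}\in\{0,1\}$ detects whether $\lambda/\alpha$ is a horizontal strip of size $2$ and $c^\lambda_{\alpha,(1,1)}\in\{0,1\}$ whether it is a vertical strip; inducing back gives
\[
[V_\lambda\otimes M^{(d-2,2)}] = \sum_\mu N_\mu[V_\mu], \qquad N_\mu = \sum_{\alpha\vdash d-2}\big(c^\lambda_{\alpha,(2)}c^\mu_{\alpha,(2)} + c^\lambda_{\alpha,(1,1)}c^\mu_{\alpha,(1,1)}\big).
\]

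The crux, and where I expect the real work to lie, is to match $N_\mu$ with the stated $\tfrac12(d_{\mu\lambda}+e_{\mu\lambda}-e'_{\mu\lambda})$. Here I would introduce two auxiliary modules. First, the ordered-pair module $M^{(d-2,1,1)} = \Ind_{S_{d-2}}^{S_d}\mathbf1$ gives $\Ind\Res$ over $S_{d-2}$, i.e.\ removing then adding two cells one at a time; its coefficient is $\sum_\alpha m_{\alpha\lambda}m_{\alpha\mu}$ with $m_{\alpha\lambda} = c^\lambda_{\alpha,(2)}+c^\lambda_{\alpha,(1,1)}$ the number of size-$2$ skew standard tableaux of shape $\lambda/\alpha$, and this is precisely $d_{\mu\lambda}$. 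Second, the sign-twisted module $M' = \Ind_{S_{d-2}\times S_2}^{S_d}(\mathbf1\boxtimes\sgn)$ contributes the cross terms $N'_\mu = \sum_\alpha(c^\lambda_{\alpha,(2)}c^\mu_{\alpha,(1,1)}+c^\lambda_{\alpha,(1,1)}c^\mu_{\alpha,(2)})$, using $V_{(2)}\otimes\sgn\cong V_{(1,1)}$ on the $S_2$-factor. Since $M^{(d-2,1,1)} = M^{(d-2,2)}\oplus M'$, we get $d_{\mu\lambda} = N_\mu+N'_\mu$; and computing
\[
N_\mu - N'_\mu = \sum_\alpha (c^\lambda_{\alpha,(2)}-c^\lambda_{\alpha,(1,1)})(c^\mu_{\alpha,(2)}-c^\mu_{\alpha,(1,1)}),
\]
the decisive combinatorial observation is that the factor $c^\lambda_{\alpha,(2)}-c^\lambda_{\alpha,(1,1)}$ equals $+1$ when $\lambda/\alpha$ is two horizontally adjacent cells, $-1$ when it is two vertically adjacent cells, and $0$ when the two cells are disconnected. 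Hence $N_\mu-N'_\mu = e_{\mu\lambda}-e'_{\mu\lambda}$, and solving yields $N_\mu = \tfrac12(d_{\mu\lambda}+e_{\mu\lambda}-e'_{\mu\lambda})$; subtracting the $M^{(d-1,1)}$ contribution $c_{\mu\lambda}$ gives the claimed multiplicity. The main obstacle is exactly this bookkeeping: checking that the size-$2$ Pieri coefficients encode precisely the horizontal/vertical/disconnected trichotomy, that the passage from ordered to unordered pairs supplies the correct factor of $\tfrac12$, and that the degenerate configurations and small values of $d$ are handled without double counting.
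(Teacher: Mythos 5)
Your proof is correct, but there is nothing in this paper to compare it against: the lemma is recalled verbatim from the predecessor work \cite{syzrep}, and the paper explicitly states that no proofs are given for these representation-theoretic lemmas, deferring to \cite{syzrep} and standard references. So your argument supplies a genuine self-contained proof of a statement this paper only cites.

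On its own merits the argument closes up completely, including the bookkeeping you flagged as the main risk. Young's rule justifies $[V_{(d-2,2)}]=[M^{(d-2,2)}]-[M^{(d-1,1)}]$ (the only partitions dominating $(d-2,2)$ are $(d)$, $(d-1,1)$, $(d-2,2)$, each with Kostka number $1$), and the projection formula reduces everything to $\Ind\Res$ computations governed by Pieri's rule. The trichotomy is automatic: a two-cell skew shape $\lambda/\alpha$ lying in one row or one column is necessarily a connected domino, so $c^\lambda_{\alpha,(2)}-c^\lambda_{\alpha,(1,1)}$ is $+1$, $-1$, $0$ according as $\lambda/\alpha$ is a horizontal domino, a vertical domino, or two disconnected cells. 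The identification $d_{\mu\lambda}=\sum_\alpha m_{\alpha\mu}m_{\alpha\lambda}$ with $m_{\alpha\lambda}=c^\lambda_{\alpha,(2)}+c^\lambda_{\alpha,(1,1)}$ is right because an adjacent pair can be removed consecutively in exactly one order while a disconnected pair admits two, matching the number of standard skew tableaux; and the splitting $\Ind_{S_{d-2}}^{S_d}\mathbf{1}=M^{(d-2,2)}\oplus M'$ only needs $FS_2\cong\mathbf{1}\oplus\sgn$, valid since the characteristic is $0$ or exceeds $d\geq 4$. Solving $N_\mu+N'_\mu=d_{\mu\lambda}$ and $N_\mu-N'_\mu=e_{\mu\lambda}-e'_{\mu\lambda}$ and subtracting the $M^{(d-1,1)}$ contribution $c_{\mu\lambda}$ gives exactly the stated exponent $-c_{\mu\lambda}+\tfrac{1}{2}(d_{\mu\lambda}+e_{\mu\lambda}-e'_{\mu\lambda})$. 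This induction--restriction route (rather than, say, a direct character inner-product computation) is the natural one for internal tensor products with near-trivial representations, and it buys a proof that needs nothing beyond the branching rule, Pieri, and Frobenius reciprocity.
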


\begin{corollary} \label{cor: multiplicity double next}
We have
\[ \mult(V_{\lambda_{i+2}}, V_{\lambda_i}\otimes \Sym^2 V_{(d-1, 1)} ) = \left\{ \begin{array}{ll}
0 & \text{for all $2 \leq i \leq d-4$,} \\
1 & \text{if $i = d-2$.} \\ \end{array} \right. \]
\end{corollary}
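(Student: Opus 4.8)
The plan is to reduce the computation to the two tensor-product rules of Lemmas~\ref{lem: tensorstandard} and~\ref{lem: tensornotsostandard} by first decomposing $\Sym^2 V_{(d-1,1)}$ into irreducibles. Concretely, I would first establish the isomorphism
\[
\Sym^2 V_{(d-1,1)}\cong V_{(d)}\oplus V_{(d-1,1)}\oplus V_{(d-2,2)}.
\]
Writing $P\cong V_{(d)}\oplus V_{(d-1,1)}$ for the natural $d$-dimensional permutation representation, one has $\Sym^2 P\cong V_{(d)}\oplus V_{(d-1,1)}\oplus\Sym^2 V_{(d-1,1)}$ on the one hand, while on the other hand $\Sym^2 P$ is the permutation representation on multisets of size two: the diagonal multisets span a copy of $P$, and the genuine $2$-subsets span the permutation representation on $2$-subsets, which by Young's rule decomposes as $V_{(d)}\oplus V_{(d-1,1)}\oplus V_{(d-2,2)}$. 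Comparing the two expressions and cancelling isolates the displayed decomposition (this is in any case standard).

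Tensoring with $V_{\lambda_i}$ then gives
\[
V_{\lambda_i}\otimes\Sym^2 V_{(d-1,1)}\cong V_{\lambda_i}\oplus\bigl(V_{\lambda_i}\otimes V_{(d-1,1)}\bigr)\oplus\bigl(V_{\lambda_i}\otimes V_{(d-2,2)}\bigr),
\]
and it suffices to read off the multiplicity of $V_{\lambda_{i+2}}$ in each summand. The first summand contributes $\delta_{\lambda_{i+2}\lambda_i}=0$. For the second, Lemma~\ref{lem: tensorstandard} gives multiplicity $c_{\lambda_{i+2}\lambda_i}-\delta_{\lambda_{i+2}\lambda_i}$; since the Young diagrams of $\lambda_i=(d-i,2,1^{i-2})$ and $\lambda_{i+2}=(d-i-2,2,1^{i})$ differ in four cells (two at the end of the first row and two at the bottom of the first column), they cannot be related by moving a single cell, so $c_{\lambda_{i+2}\lambda_i}=0$ and this summand contributes nothing. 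Hence the entire multiplicity comes from the last summand, which by Lemma~\ref{lem: tensornotsostandard} equals $\tfrac12\bigl(d_{\lambda_{i+2}\lambda_i}+e_{\lambda_{i+2}\lambda_i}-e'_{\lambda_{i+2}\lambda_i}\bigr)$ (the term $-c_{\lambda_{i+2}\lambda_i}$ again vanishing).

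The remaining step is to evaluate $d_{\lambda_{i+2}\lambda_i}$, $e_{\lambda_{i+2}\lambda_i}$ and $e'_{\lambda_{i+2}\lambda_i}$ by inspecting the symmetric difference of the two diagrams. As this difference consists of exactly two cells to be removed and two to be added, the removed cells must be precisely $\lambda_{i+2}\setminus\lambda_i$ and the added cells precisely $\lambda_i\setminus\lambda_{i+2}$, each admitting a unique admissible order; thus $d_{\lambda_{i+2}\lambda_i}=1$ in all cases. For $2\le i\le d-4$ the removed cells form a vertical domino (the bottom two cells of the first column of $\lambda_{i+2}$) while the added cells form a horizontal domino (the last two cells of the first row); being of opposite orientation, they are counted by $e'$ and not $e$, so $e_{\lambda_{i+2}\lambda_i}=0$, $e'_{\lambda_{i+2}\lambda_i}=1$, and the multiplicity is $\tfrac12(1+0-1)=0$. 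For $i=d-2$ one has $\lambda_{i+2}=\lambda_d=(1^d)$ and $\lambda_i=(2,2,1^{d-4})$; here both the removed domino (the bottom of the single column of $(1^d)$) and the added domino (the two cells of the second column) are vertical, so $e_{\lambda_{i+2}\lambda_i}=1$, $e'_{\lambda_{i+2}\lambda_i}=0$, and the multiplicity is $\tfrac12(1+1-0)=1$.

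I expect the main obstacle to be the careful bookkeeping in this last step: correctly identifying the orientations of the removed and added dominoes and confirming that the definitions of $d$, $e$ and $e'$ each register exactly one admissible configuration. In particular one must check that no non-adjacent pair of removals can occur, which is forced here by the fact that the symmetric difference has size four, so the two removed and two added cells are completely determined. The boundary index $i=d-4$, where $\lambda_{i+2}=\lambda_{d-2}$ is itself near-rectangular, is a routine special case of the generic computation and needs only to be checked not to alter the orientations.
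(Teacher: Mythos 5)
Your proof is correct and follows exactly the route the paper intends: the statement is positioned as a corollary of Lemmas~\ref{lem: tensorstandard} and~\ref{lem: tensornotsostandard} (with the proof itself deferred to~\cite{syzrep}), and that derivation is precisely your decomposition $\Sym^2 V_{(d-1,1)}\cong V_{(d)}\oplus V_{(d-1,1)}\oplus V_{(d-2,2)}$ followed by reading off multiplicities from the two lemmas. Your domino bookkeeping is also accurate: since the symmetric difference of $\lambda_{i+2}$ and $\lambda_i$ has exactly four cells, the removal is the vertical domino at the bottom of the first column and the addition is the horizontal domino at the end of the first row when $2\le i\le d-4$ (giving $\tfrac12(1+0-1)=0$), while for $i=d-2$ both dominoes are vertical (giving $\tfrac12(1+1-0)=1$).
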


\begin{lemma} \label{lem: sym3}
Assume $d \geq 5$. Then 
\[ \mult(V_{\lambda_3}, \Sym^3 V_{(d-1,1)})  = 
\mult(V_{\lambda_d}, V_{\lambda_{d-3}} \otimes \Sym^3 V_{(d-1,1)}) = 0.\]
\end{lemma}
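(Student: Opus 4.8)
The plan is to reduce the second vanishing to the first, and then to prove the first by realizing $\Sym^3 V_{(d-1,1)}$ as a direct summand of an explicit permutation representation and applying Young's rule.

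First I would dispose of the reduction. All irreducible $F[S_d]$-modules are self-dual, and tensoring by the sign character $V_{\lambda_d}=V_{(1^d)}$ implements conjugation of partitions, $V_\mu\otimes V_{\lambda_d}\cong V_{\mu^*}$. Hence, by the tensor--hom adjunction together with self-duality, for any representation $W$
\[
\mult(V_{\lambda_d}, V_{\lambda_{d-3}}\otimes W)=\mult(V_{\lambda_{d-3}}\otimes V_{\lambda_d}, W)=\mult(V_{\lambda_{d-3}^*}, W).
\]
Since $\lambda_i^*=\lambda_{d-i}$ (a direct check on the Young diagram of $(d-i,2,1^{i-2})$), we have $\lambda_{d-3}^*=\lambda_3$, so taking $W=\Sym^3 V_{(d-1,1)}$ turns the second multiplicity in the statement into $\mult(V_{\lambda_3},\Sym^3 V_{(d-1,1)})$. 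It therefore suffices to prove the first vanishing.

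Next I would prove that first vanishing. Writing $V=V_{(d-1,1)}$, let $M=\mathbf 1\oplus V$ be the permutation representation of $S_d$ on the standard $d$-element set, so that $\Sym^3 V$ is a direct summand of $\Sym^3 M$. The point is that $\Sym^3 M$ is easy to decompose: it has a basis of degree-three monomials in $d$ variables permuted by $S_d$, and sorting these monomials by the shape of their index multiset ($x_i^3$, $x_i^2x_j$, $x_ix_jx_k$) gives the three transitive summands
\[
\Sym^3 M\cong M^{(d-1,1)}\oplus M^{(d-2,1,1)}\oplus M^{(d-3,3)},
\]
where $M^\mu=\Ind_{S_\mu}^{S_d}\mathbf 1$ denotes the Young permutation module; a dimension count against $\binom{d+2}{3}$ confirms this. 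By Young's rule, $V_\lambda$ occurs in $M^\mu$ only when $\lambda\trianglerighteq\mu$ in dominance order, so it is enough to check that $\lambda_3=(d-3,2,1)$ dominates none of the three partitions above.

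The dominance check is short and uniform in $d$: since $\lambda_3$ has first part $d-3$, the relation $\lambda_3\trianglerighteq\mu$ forces $\mu_1\le d-3$, which rules out $(d-1,1)$ and $(d-2,1,1)$ immediately; and for the remaining candidate $(d-3,3)$ the second partial-sum inequality already fails, as $\lambda_3$ has second partial sum $d-1<d$. Hence $V_{\lambda_3}$ does not occur in $\Sym^3 M$, and a fortiori not in the summand $\Sym^3 V$, giving $\mult(V_{\lambda_3},\Sym^3 V_{(d-1,1)})=0$. I do not expect a genuine obstacle here; the only points demanding care are matching conjugate-partition conventions in the reduction step and correctly labelling the permutation summands of $\Sym^3 M$ for small $d$ (e.g.\ $M^{(d-3,3)}=M^{(3,2)}$ when $d=5$), after which the dominance comparison is forced.
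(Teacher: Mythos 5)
Your proof is correct, and every step checks out: the sign-twist/adjunction reduction is valid (indeed $\lambda_i^{*}=\lambda_{d-i}$, so $V_{\lambda_{d-3}}\otimes V_{\lambda_d}\cong V_{\lambda_3}$ and the second multiplicity equals the first); the orbit decomposition $\Sym^3 M\cong M^{(d-1,1)}\oplus M^{(d-2,1,1)}\oplus M^{(d-3,3)}$ is the right one; and $\lambda_3=(d-3,2,1)$ dominates none of the three shapes, so Young's rule forces the vanishing. Be aware, however, that there is no in-text proof in this paper to compare against: the lemma is recalled verbatim from~\cite{syzrep}, with the paper stating explicitly that no proofs are given for these representation-theoretic preliminaries. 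Measured against the style of argument that source relies on (the paper's own remark that the ad hoc $d=3,4$ multiplicities ``follow from a simple computation of their characters'' is indicative, i.e.\ plethysm formulas such as $\chi_{\Sym^3 V}(g)=\tfrac16\bigl(\chi_V(g)^3+3\chi_V(g)\chi_V(g^2)+2\chi_V(g^3)\bigr)$ paired against $\chi_{\lambda_3}$), your route is a genuinely self-contained alternative: embedding $\Sym^3 V_{(d-1,1)}$ as a summand of $\Sym^3$ of the permutation module bounds \emph{all} possible constituents at once, the dominance check is two lines and uniform in $d$, and no character evaluations are needed. The only notational wrinkles are the ones you flagged yourself: the intermediate expression $\mult(V_{\lambda_{d-3}}\otimes V_{\lambda_d}, W)$ should be read as the multiplicity of the irreducible module $V_{\lambda_{d-3}}\otimes V_{\lambda_d}\cong V_{\lambda_3}$, and for $d=5$ the third summand is $M^{(3,2)}$, where the first-part comparison $2<3$ already rules out dominance. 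Neither affects correctness.
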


For $d=3, 4$ one also needs the following ad-hoc results. These follow from a simple computation of their characters.
\begin{equation} \label{eq: adhocmults} 
  \mult (V_{(1^3)}, \Sym^3 V_{(2,1)}) = 1 \quad \text{and} \quad
 \mult (V_{(1^4)}, \Sym^4 V_{(3, 1)}) = 0,
\end{equation}

Compared to~\cite{syzrep}, we need some more advanced results about the representation theory of the symmetric group. 

Firstly, we will need some results regarding reading words of Young tableaux. For a partition $\lambda \vdash d$, a \emph{Young tableau of shape $\lambda$} is the Young diagram of $\lambda$ with the integers $1, 2, \ldots, d$ filled in. We call a Young tableau $T$ \emph{standard} if the entries in each row and column of $T$ are increasing. Denote by $\ST(\lambda)$ the set of standard Young tableaux of shape $\lambda$, and recall that there are $\dim V_\lambda$ of these. Let $T$ be a standard Young tableau. Then the \emph{reading word} $w(T)$ of $T$ is obtained by concatenating all entries in $T$ from left to right and bottom to top. For example, the reading word of the standard tableau
\begin{center}
$T = \,$ \begin{ytableau}
1 & 3 & 5 & 7 \\ 
2 & 4 \\
6
\end{ytableau}
\end{center}
is $6\,2\,4\,1\,3\,5\,7$. The \emph{charge word of $T$}, denoted by $\chw(T)$, is obtained by inductively assigning subscripts $s(i)$ to the letters $i=1, \ldots, d$ in the reading word of $T$ in the following way. Assign the letter $1$ the subscript $0$. For $i\geq 1$ assign the letter $i+1$ the subscript $s(i+1)=s(i)+1$ if $i+1$ is to the left of $i$ and the subscript $s(i+1)=s(i)$ if $i+1$ is to the right of $i$. The \emph{charge of $T$}, denoted by $\ch (T)$, is the sum of the subscripts $s(i)$ of the charge word of $T$. For the Young tableau as above, the charge word of $T$ is equal to $6_3\, 2_1\, 4_2\, 1_0\, 3_1\, 5_2\, 7_3$, and it has charge $3+1+2+0+1+2+3 = 12$. We sometimes also write down $\chw(T)$ as a Young tableau, where the subscript $s(i)$ is written in the cell containing $i$. For example, for $T$ as above, we have
\begin{center}
$\chw(T) = \,$\begin{ytableau}
0 & 1 & 2 & 3 \\
1 & 2 \\
3
\end{ytableau}\,.
\end{center}
We denote by $s(T,i)$ the subscript of the letter $i$ in the charge word of $T$ and by $m(T)$ the largest subscript appearing in the charge word of $T$. 

\begin{lemma}\label{lem:plambda.reading.words}
Let $\lambda\vdash d$. For each $i=1, \ldots, d-1$ the number of $T\in \ST(\lambda)$ for which $s(T,i+1) = s(T,i) + 1$ is equal to 
\[
p(\lambda) = \frac{1}{2} \left( \dim V_\lambda - \chi_\lambda((1\,2))\right).
\]
\end{lemma}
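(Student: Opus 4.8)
The plan is to translate the analytic-looking condition on charge-word subscripts into a purely combinatorial descent condition, and then to read off both sides of the claimed identity from Young's orthogonal form of the Specht module.

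First I would unwind the definition of $\chw(T)$. By construction $s(T,i+1)=s(T,i)+1$ holds exactly when $i+1$ lies to the left of $i$ in the reading word $w(T)$. Since $w(T)$ is read left-to-right and bottom-to-top and entries increase along rows, this occurs precisely when $i+1$ sits in a strictly lower row of $T$ than $i$; that is, the condition is exactly that $i$ be a \emph{descent} of $T$. So the quantity to compute is $D_i(\lambda):=\#\{T\in\ST(\lambda): i+1\text{ lies in a strictly lower row than }i\}$, and the content of the lemma is that this is independent of $i$ and equals $p(\lambda)$.

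Next I would split $\ST(\lambda)$ according to the relative position of $i$ and $i+1$ into three classes: (a) same row, of size $r_i$; (b) same column, of size $c_i$; (c) different row and different column, of size $d_i$. Class (a) never produces a descent and class (b) always does. On class (c) the map $T\mapsto s_iT$ (swap the entries $i$ and $i+1$) is a fixed-point-free involution — it preserves standardness precisely because $i,i+1$ occupy distinct rows and columns — and it interchanges "$i+1$ below $i$" with its negation. Hence exactly half of class (c) are descents, giving $D_i(\lambda)=c_i+\tfrac12 d_i$.

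For the right-hand side I would invoke Young's orthogonal (seminormal) form, standard in the cited references such as~\cite{sagan}: there is a basis $\{v_T\}_{T\in\ST(\lambda)}$ of $V_\lambda$ on which $s_i=(i,i+1)$ acts by $s_iv_T=v_T$ when $i,i+1$ share a row, $s_iv_T=-v_T$ when they share a column, and otherwise preserves $\Span(v_T,v_{s_iT})$. In the last case $s_i$ restricts to an involution of a two-dimensional space that is not $\pm\id$ (since $s_iv_T\notin F v_T$), so its eigenvalues are $+1$ and $-1$ and it contributes exactly one $-1$-eigenvector. Counting the dimension of the $(-1)$-eigenspace of the involution $s_i$ thus gives $c_i+\tfrac12 d_i$; on the other hand, for any involution this dimension equals $\tfrac12(\dim V_\lambda-\chi_\lambda(s_i))=\tfrac12(\dim V_\lambda-\chi_\lambda((1\,2)))$, as all transpositions are conjugate. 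Comparing with the previous step yields $D_i(\lambda)=\tfrac12(\dim V_\lambda-\chi_\lambda((1\,2)))=p(\lambda)$, manifestly independent of $i$.

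The main obstacle is the step invoking Young's orthogonal form: one must pin down that the off-diagonal blocks are involutions distinct from $\pm\id$, hence traceless with a single $-1$-eigenvalue each, so that the $(-1)$-eigenspace dimension is $c_i+\tfrac12 d_i$. Once this is in place, both $D_i(\lambda)$ and $\tfrac12(\dim V_\lambda-\chi_\lambda((1\,2)))$ are expressed through the same tableau counts $c_i$ and $d_i$, and the identity is immediate; the fixed-point-free involution on class (c) is the combinatorial heart that makes the two expressions coincide.
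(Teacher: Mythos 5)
Your proof is correct, and it takes a genuinely different route from the paper's. The paper argues by induction on $d$: it uses the recursion $p(\lambda)=\sum_{\mu}p(\mu)$ over partitions $\mu\vdash d-1$ obtained by removing a cell, together with Young's rule, to reduce to the single case $i=d-1$; it then counts tableaux with $d$ strictly below $d-1$ via partitions $\mu\vdash d-2$ obtained by removing two cells in different rows, and identifies the resulting sum $\sum_{\mu\in M_2\cup M_3}\dim V_\mu$ with $p(\lambda)$ by combining the restriction to $S_{d-2}$ with the Murnaghan--Nakayama evaluation of $\chi_\lambda((1\,2))$. You instead fix an arbitrary $i$, recast the charge-word condition as the descent condition (which matches the paper's observation for $i=d-1$), split $\ST(\lambda)$ into the three classes $r_i,c_i,d_i$ according to the relative position of $i$ and $i+1$, and use the swap involution on the mixed class to get the count $c_i+\tfrac12 d_i$; the identification with $p(\lambda)$ then comes from Young's seminormal/orthogonal form, since the $(-1)$-eigenspace of the involution $s_i$ has dimension both $c_i+\tfrac12 d_i$ (by the block structure) and $\tfrac12(\dim V_\lambda-\chi_\lambda((1\,2)))$ (by the trace formula for involutions, valid as the characteristic is not $2$ under the paper's hypotheses). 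Your argument buys uniformity: it needs no induction, treats all $i$ at once, and makes the independence of $i$ manifest, essentially by re-deriving the identity $\chi_\lambda((1\,2))=r_i-c_i$ for every $i$; the paper's argument buys economy of tools, using only the branching rule and Murnaghan--Nakayama, which are already invoked elsewhere in the same section (e.g.\ in Lemma~\ref{lem:repr.bounding.char.by.plambda}), rather than the seminormal form.
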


\begin{proof}
The cases $d=1, 2$ are easy to do explicitly, so assume that $d\geq 3$.

We first note that $p(\lambda) = \sum_{\mu < \lambda} p(\mu)$ where the sum is over all partitions $\mu\vdash d-1$ which are obtained from $\lambda$ by removing one cell from the Young diagram. Hence, by using induction, Young's rule and restricting to $S_{d-1}$, it suffices to prove the case $i=d-1$. 

Firstly, note that $s(T, d) = s(T,d-1) + 1$ if and only if the letter $d$ appears strictly below $d-1$ in the tableau $T$. So we wish to prove that the number of $T\in \ST(\lambda)$ for which the letter $d$ appears strictly below $d-1$ in $T$ is equal to $p(\lambda)$. Let $\mu\vdash d-2$ be a partition obtained from $\lambda$ by removing two cells in different rows of the Young diagram. For any Young tableau $T'\in \ST(\mu)$ there is exactly one way to obtain a $T\in \ST(\lambda)$ for which $T'\subset T$ and such that $d$ appears below $d-1$ in $T$. Moreover, every standard Young tableaux $T$ for which $d$ appears strictly below $d-1$ can be obtained in this way. Let $M_1, M_2$ and $M_3$ be the set of partitions $\mu\vdash d-2$ obtained from $\lambda$ by removing two horizontally adjacent cells, two vertically adjacent cells and two non-adjacent cells respectively. Then the quantity we are looking for is given by
\[
\sum_{\mu\in M_2\cup M_3} \dim V_\mu
\]
i.e.\ the sum is over all partitions $\mu\vdash d-2$ which are obtained from $\lambda$ by removing two cells in different rows. Now note that by Young's rule
\[
\dim V_\lambda = \dim \Res_{S_{d-2}}^{S_d} V_\lambda = \sum_{\mu\in M_1} \dim V_\mu + \sum_{\mu\in M_2} \dim V_\mu +2 \sum_{\mu\in M_3} \dim V_\mu.
\]
(The coefficient for $M_3$ is $2$ since there are two ways to remove two non-adjacent cells from $\lambda$.) The Murnaghan--Nakayama rule~\cite[Thm.\,4.10.2]{sagan} implies that
\[
\chi_\lambda((1\,2)) = \sum_{\mu\in M_1}\dim V_\mu - \sum_{\mu\in M_2}\dim V_\mu.
\]
We conclude that
\[
\sum_{\mu\in M_2\cup M_3} \dim V_\mu = \frac{1}{2} \left( \dim V_\lambda - \chi_\lambda((1\,2))\right),
\]
as desired.
\end{proof}

The following result will be needed for proving our volume formula for scrollar invariants attached to representations.

\begin{corollary}\label{cor:sum.maximal.reading.word}
Let $\lambda\vdash d$. We have that
\[
\sum_{T\in \ST(\lambda)} m(T) = p(\lambda)(d-1).
\]
\end{corollary}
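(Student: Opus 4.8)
The plan is to reduce the statement to Lemma~\ref{lem:plambda.reading.words} by an elementary telescoping argument, after making one structural observation about charge words. The key point is that for any $T\in\ST(\lambda)$ the subscripts $s(T,1),s(T,2),\ldots,s(T,d)$ form a non-decreasing sequence: by definition $s(T,1)=0$, and each successive difference $s(T,i+1)-s(T,i)$ equals either $0$ or $1$ according to whether $i+1$ lies to the right or to the left of $i$ in the reading word. Since the sequence never decreases, its maximal value is attained at the last letter, and hence
\[
m(T)=s(T,d).
\]
(One can sanity-check this on the worked example, where the subscripts read $0,1,1,2,2,3,3$ and $m(T)=3=s(T,7)$.)

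Next I would telescope. Because each increment $s(T,i+1)-s(T,i)\in\{0,1\}$ is exactly the indicator of the event ``$s(T,i+1)=s(T,i)+1$'', writing $m(T)$ as a sum of increments gives
\[
m(T)=s(T,d)-s(T,1)=\sum_{i=1}^{d-1}\bigl(s(T,i+1)-s(T,i)\bigr),
\]
so that $m(T)$ simply counts the indices $i\in\{1,\ldots,d-1\}$ at which the subscript jumps by one. Summing over all standard tableaux of shape $\lambda$ and interchanging the two finite sums, I obtain
\[
\sum_{T\in\ST(\lambda)}m(T)=\sum_{i=1}^{d-1}\#\bigl\{\,T\in\ST(\lambda):s(T,i+1)=s(T,i)+1\,\bigr\}.
\]
By Lemma~\ref{lem:plambda.reading.words}, each inner count equals $p(\lambda)$, independently of $i$. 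There are $d-1$ terms, so the total is $(d-1)\,p(\lambda)$, which is the desired identity.

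I do not expect a genuine obstacle here: the entire arithmetic content is already packaged into Lemma~\ref{lem:plambda.reading.words}. The only real insight is recognizing that the charge subscripts increase monotonically in steps of $0$ or $1$, which converts the \emph{maximum} $m(T)$ into the \emph{sum} $\sum_i(s(T,i+1)-s(T,i))$ of indicators; after that, an interchange of summation reduces the corollary to summing the lemma over $i=1,\ldots,d-1$.
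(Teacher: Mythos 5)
Your proposal is correct and follows essentially the same argument as the paper: both identify $m(T)$ with $s(T,d)$, telescope it as $\sum_{i=1}^{d-1}\bigl(s(T,i+1)-s(T,i)\bigr)$, interchange the order of summation, and invoke Lemma~\ref{lem:plambda.reading.words} once per index $i$. The only difference is that you spell out the monotonicity of the subscripts (which justifies $m(T)=s(T,d)$), a step the paper leaves implicit.
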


\begin{proof}
We have that
\begin{align*}
\sum_{T\in \ST(\lambda)} m(T) &= \sum_{T\in \ST(\lambda)} s(T, d) = \sum_{T\in \ST(\lambda)} \sum_{i=1}^{d-1} (s(T, i+1) - s(T,i)) \\
&= \sum_{i=1}^{d-1}\sum_{T\in \ST(\lambda)} (s(T,i+1) - s(T,i)) = p(\lambda)(d-1),
\end{align*}
where we have used Lemma~\ref{lem:plambda.reading.words}.
\end{proof}

To determine precisely when a resolvent is maximal, we will need the following technical lemma.

\begin{lemma}\label{lem:repr.lemma.for.maximality}
Let $\lambda, e$ be partitions of $d$. Then 
\begin{equation}\label{eq:repr.inequality.maximality}
\frac{\dim V_\lambda - \chi_\lambda(e)}{\dim V_\lambda - \chi_\lambda ((12))} \leq \frac{\dim V_{(d-1,1)} - \chi_{(d-1,1)}(e)}{\dim V_{(d-1,1)} - \chi_{(d-1,1)} ((12))},
\end{equation}
with equality if and only if
\begin{itemize}
\item $\lambda = (d)$ or $\lambda = (d-1,1)$, or
\item $e = (1^d)$ or $e = (2, 1^{d-2})$, or
\item $\lambda = (a,b)$ for some $a,b$ and $e = (3, 1^{d-3})$.
\end{itemize}
\end{lemma}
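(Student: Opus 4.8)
The plan is to strip the statement down to a single inequality, prove that by an induction that peels off cycles, and then read off the equality cases from a small Schur‑positivity computation. First I would simplify the right‑hand side. Because $V_{(d-1,1)}$ is the standard representation, $\chi_{(d-1,1)}(\sigma)=\#\{\text{fixed points of }\sigma\}-1$, so its denominator is $\dim V_{(d-1,1)}-\chi_{(d-1,1)}((12))=2$ and its numerator is $\dim V_{(d-1,1)}-\chi_{(d-1,1)}(e)=d-f(e)=\operatorname{supp}(e)$, the number of points moved by a permutation of type $e$. By Lemma~\ref{lem:plambda.reading.words} the left denominator equals $2p(\lambda)$, which is positive unless $\lambda=(d)$. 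Clearing denominators (and noting that for $\lambda=(d)$ both sides vanish), \eqref{eq:repr.inequality.maximality} becomes
\[
\dim V_\lambda-\chi_\lambda(e)\ \le\ \operatorname{supp}(e)\cdot p(\lambda),
\]
equality here being equivalent to equality in \eqref{eq:repr.inequality.maximality}. Writing $F_\lambda(\sigma)=\dim V_\lambda-\chi_\lambda(\sigma)\ge 0$, the task is to prove $F_\lambda(\sigma)\le \operatorname{supp}(\sigma)\,p(\lambda)$ and classify equality; in these terms the lemma asserts that the standard representation maximizes $F_\lambda(\sigma)/F_\lambda((12))$.

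The engine is the restriction rule. If $\sigma=\sigma'\sigma''$ with $\sigma',\sigma''$ supported on complementary blocks of sizes $M',M''$, then $\operatorname{Res}^{S_{M'+M''}}_{S_{M'}\times S_{M''}}V_\lambda=\bigoplus_{\alpha,\beta}c^\lambda_{\alpha\beta}\,V_\alpha\boxtimes V_\beta$ (Littlewood--Richardson) gives
\[
F_\lambda(\sigma)=\sum_{\alpha,\beta}c^\lambda_{\alpha\beta}\bigl(\dim V_\alpha\cdot F_\beta(\sigma'')+\dim V_\beta\cdot F_\alpha(\sigma')-F_\alpha(\sigma')F_\beta(\sigma'')\bigr).
\]
Taking $\sigma''$ to be a single transposition inside one block and the identity on the other, the same formula yields the branching identity
\[
\sum_{\alpha,\beta}c^\lambda_{\alpha\beta}\,\dim V_\alpha\, p(\beta)=p(\lambda)\qquad(M''\ge 2),
\]
since $F_\beta(\text{transposition})=2p(\beta)$ and $F_\alpha(\mathrm{id})=0$; symmetrically for $M'\ge 2$.

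Now I would induct on the number of nontrivial cycles of $\sigma$. In the inductive step, dropping the nonnegative cross term $F_\alpha F_\beta$ and inserting the bounds $F_\alpha(\sigma')\le M'\,p(\alpha)$, $F_\beta(\sigma'')\le M''\,p(\beta)$ collapses, via the two branching identities, to $F_\lambda(\sigma)\le (M'+M'')p(\lambda)=\operatorname{supp}(\sigma)p(\lambda)$; the reduction of fixed points is the same step with one block trivial. The base case is a single $m$-cycle $\gamma$, where $F_\mu(\gamma)\le m\,p(\mu)$ must be shown directly. Here I would use Corollary~\ref{cor:sum.maximal.reading.word}, which gives $\sum_{T\in\ST(\mu)}m(T)=p(\mu)(m-1)$, together with the observation that $m(T)=s(T,m)\ge 1$ whenever $\mu$ is not a single row (the top entry has charge $0$ only if the reading word is $1\,2\cdots m$, which forces one row); this yields $p(\mu)\ge \dim V_\mu/(m-1)$, dispatching all non‑hook shapes, while for hooks $\mu=(m-r,1^r)$ one uses in addition $\chi_\mu(\gamma)=(-1)^r$ and $\dim V_\mu=\binom{m-1}{r}$.

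Finally the equality analysis, which I expect to be the main obstacle. The slack $\operatorname{supp}(e)p(\lambda)-F_\lambda(e)$ is exactly the Schur coefficient $\langle G_e,s_\lambda\rangle$ of the symmetric function $G_e=\tfrac12\operatorname{supp}(e)(p_1^d-p_2p_1^{d-2})-(p_1^d-p_e)$, so the whole family of inequalities is Schur‑nonnegativity of $G_e$, and I would organize the equality cases through its support. Using $p_1^2-p_2=2e_2$ one factors $G_e=p_1^{f(e)}H$ with $H$ of degree $\operatorname{supp}(e)$, and a Pieri step reduces equality to the Schur support of $H$. For a single transposition $H=0$, so equality holds for every $\lambda$; for a single $3$-cycle one computes $H=3s_{(1^3)}$, whose unique shape has three rows, so $p_1^{f}\cdot s_{(1^3)}$ keeps equality exactly for $\lambda$ with at most two rows, i.e.\ $\lambda=(a,b)$; for a single $m$-cycle with $m\ge 4$ (for instance $H=2s_{(2,2)}+6s_{(2,1,1)}+2s_{(1^4)}$ when $m=4$) the support already contains the non‑hook $(2,2)$, and likewise whenever $e$ has two or more nontrivial cycles the dropped cross terms cannot all vanish except for degenerate shapes, so the equality set collapses to $\{(d),(d-1,1)\}$. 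Matching this support computation against the three listed families, and checking carefully that nothing else survives the multi‑cycle cross‑term vanishing, is the delicate bookkeeping that completes the proof.
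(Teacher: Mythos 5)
Your inequality argument is correct and takes a genuinely different route from the paper's. The paper inducts on $d$ and on the number of fixed points of $e$, peeling off one cycle at a time with the Murnaghan--Nakayama rule and bounding the resulting border-strip sums via Lemma~\ref{lem:repr.bounding.char.by.plambda}; your replacement --- restriction to $S_{M'}\times S_{M''}$ with the cross-term identity, the branching identity $\sum_{\alpha,\beta}c^\lambda_{\alpha\beta}\dim V_\alpha\,p(\beta)=p(\lambda)$, and a single-cycle base case handled by Corollary~\ref{cor:sum.maximal.reading.word} together with hook characters --- checks out (the identity, the branching relation, the bound $m(T)\ge 1$ for shapes with at least two rows, and the hook estimate $\binom{m-1}{r}-(-1)^r\le m\binom{m-2}{r-1}$ are all valid). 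Your reformulation of the slack as the Schur coefficient $\langle G_e,s_\lambda\rangle$ with the factorization $G_e=p_1^{f(e)}H$ is also correct, and it packages the equality analysis more cleanly than the paper does: equality for $(\lambda,e)$ becomes the statement that $\lambda$ contains no shape in the Schur support of $H$, and your computations $H=0$, $H=3s_{(1^3)}$ and $H=2s_{(2,2)}+6s_{(2,1,1)}+2s_{(1^4)}$ for a $2$-, $3$- and $4$-cycle are right.

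The gap is exactly where you flag it, and it is a genuine one: the case where $e$ has two or more nontrivial cycles is not bookkeeping but the heart of the equality classification --- it is what forces, say, $e=(2,2,1^{d-4})$ to have equality only at $\lambda\in\{(d),(d-1,1)\}$, which is the input Proposition~\ref{prop:resolvent.maximal} actually needs. ``The dropped cross terms cannot all vanish except for degenerate shapes'' is the assertion to be proved, not a proof; as written, nothing excludes some shape with three rows, or with $\lambda_2\geq 2$, from avoiding every cross term that survives. (In the paper this is the extended case analysis over $e'=(1^{d-m})$, $(2,1^{d-m-2})$, $(3,1^{d-m-3})$, then $e=(d)$ and the small-$d$ checks, which occupies the second half of the proof.) Likewise, your collapse claim for a single $m$-cycle with $m\ge 5$ is extrapolated from the $m=4$ computation; it does follow from your own hook/non-hook estimates (which show the support of $H$ is all of $\{\mu\vdash m\}$ except $(m)$ and $(m-1,1)$), but that deduction, plus the growth argument converting support avoidance into the list $\{(d),(d-1,1)\}$ (every $\lambda$ with $\lambda_2\ge2$ contains some $\mu\vdash m$ with $\mu_2\ge 2$, every $\lambda$ with three rows contains some $\mu\vdash m$ with three rows), still has to be written.

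The good news is that your cross-term identity closes the multi-cycle gap within your framework: in symmetric-function form it reads
\[
G_{\sigma'\sigma''}=G_{\sigma'}\,p_1^{M''}+p_1^{M'}\,G_{\sigma''}+\bigl(p_1^{M'}-p_{e'}\bigr)\bigl(p_1^{M''}-p_{e''}\bigr),
\]
and all three summands are Schur-nonnegative, the last because $\langle p_1^{M'}-p_{e'},s_\alpha\rangle=F_\alpha(e')\ge 0$. So it suffices to place each $\lambda\notin\{(d),(d-1,1)\}$ in the support of one summand, minding the caveat that $(1^{M'})$ drops out of the support of $p_1^{M'}-p_{e'}$ exactly when $e'$ is even. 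Until that verification is carried out, the equality half of the lemma is unproved.
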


When $\lambda = (d)$, the denominator on the left hand side becomes $0$, so in that case the inequality should be understood by multiplying out the fractions.

For the proof, we need some more representation theoretic notions. Let $\mu, \lambda$ be Young diagrams such that $\mu\subset \lambda$. The \emph{skew Young diagram $\lambda/\mu$} is the diagram obtained by removing all cells in $\mu$ from $\lambda$. A \emph{border strip} is a connected skew Young diagram that does not contain a $2\times 2$ square of cells. Let $\alpha$ be a skew Young diagram with $m$ cells. The \emph{height} $\hto(\alpha)$ of a skew Young diagram $\alpha$ is one less than its number of rows. For example, the skew Young diagram $(5, 3, 2, 1)/ (2, 1)$ is a border strip of height 3.
\begin{center}
$(5,3,2,1)/(2,1)=\,$ \ydiagram{2+3, 1+2, 2, 1}
\end{center}

\begin{lemma}\label{lem:repr.bounding.char.by.plambda}
Let $\lambda\vdash d$ and $2\leq m \leq d-1$. Then we have that
\[
\chi_\lambda( (m, 1^{d-m})) - \chi_\lambda((m+1, 1^{d-m-1})) \leq p(\lambda).
\]
\end{lemma}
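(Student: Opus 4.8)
The plan is to rewrite both sides combinatorially and then induct on $d$ by stripping a fixed point. First I would rewrite $p(\lambda)$. Applying Lemma~\ref{lem:plambda.reading.words} with $i=1$, $p(\lambda)$ equals the number of $T\in\ST(\lambda)$ with $s(T,2)=s(T,1)+1$, i.e.\ those in which the letter $2$ lies strictly below $1$. Since $1$ always occupies the cell $(1,1)$, this forces $2$ into the cell $(2,1)$, and deleting these two cells gives a bijection between such tableaux and the standard skew tableaux of shape $\lambda/(1,1)$. Hence $p(\lambda)=f^{\lambda/(1,1)}$, the number of standard skew tableaux of that shape; in particular $p(\lambda)\geq 0$, so the inequality is only interesting when its left-hand side is positive.

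Next I would expand the two character values by the Murnaghan--Nakayama rule. Removing the $d-m$ fixed points one box at a time (each a height-$0$ border strip) and then removing the $m$-cycle as a single border strip, which forces the leftover straight shape to be a hook $(m-k,1^{k})$ of height $k$, gives
\[
\chi_\lambda((m,1^{d-m}))=\sum_{k=0}^{m-1}(-1)^{k}\,f^{\lambda/(m-k,1^{k})},
\]
and similarly for $(m+1,1^{d-m-1})$. Both sides are thus signed sums of counts of standard skew tableaux, which is the form best suited to the induction.

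The main step is an induction on $d$ obtained by deleting a single fixed point. As long as $m+1\leq d-1$, both cycle types still contain a fixed point, so the branching rule gives $\chi_\lambda((m,1^{d-m}))=\sum_{\lambda^-}\chi_{\lambda^-}((m,1^{d-1-m}))$, the sum over partitions $\lambda^-$ obtained by deleting a removable corner of $\lambda$; the same holds for the longer cycle. Classifying the standard skew tableaux of $\lambda/(1,1)$ by the cell of their largest entry (a removable corner, necessarily different from $(1,1)$ and $(2,1)$ since $d\geq 4$) yields the matching identity $p(\lambda)=\sum_{\lambda^-}p(\lambda^-)$. Subtracting and applying the inductive hypothesis to each $S_{d-1}$-summand then reduces the claim to its top case $m=d-1$, where no fixed point can be stripped.

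This top case is where I expect the real difficulty. There the branching recursion collapses and one is left comparing two alternating border-strip sums of nearly equal size, whose signs $(-1)^{\hto}$ no longer telescope against one another. I would treat it by a direct case analysis on the shape of $\lambda$: since $\chi_\lambda$ on a long cycle is supported on shapes close to a hook, only a handful of terms survive, each contributing $\pm 1$ or $\pm 2$, and these must be weighed against $p(\lambda)=f^{\lambda/(1,1)}$. The delicate shapes are the near-column (sign-type) partitions, where the surviving positive contributions are largest relative to $p(\lambda)$ and the inequality is sharpest; isolating exactly these extremal shapes and checking the bound for them is the crux of the argument.
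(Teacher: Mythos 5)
Your preparatory steps are correct and coincide with the paper's own reduction. The identification of $p(\lambda)$ with the number of standard skew tableaux of shape $\lambda/(1,1)$, the Murnaghan--Nakayama expansion into hook-complement counts, and above all the induction on $d$ by stripping a fixed point, matched on the right-hand side by the recursion $p(\lambda)=\sum_{\lambda^{-}}p(\lambda^{-})$ over removable corners, are exactly how the paper argues: it disposes of all $m\leq d-2$ in one sentence (``by induction on $d$ and restricting to $S_{d-1}$''), the recursion for $p$ having been established in the proof of Lemma~\ref{lem:plambda.reading.words}, leaving only the case $m+1=d$.

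The genuine gap is that you never prove this top case: your final paragraph is a plan, not an argument, and it defers precisely what you yourself call ``the crux''. For comparison, the paper settles it in a few lines: by Murnaghan--Nakayama, $\chi_\lambda((d))$ vanishes unless $\lambda$ is a hook and $\chi_\lambda((d-1,1))$ vanishes unless $\lambda/(1)$ is a border strip, each being $\pm 1$ otherwise, so the left-hand side is at most $2$; it then invokes $p(\lambda)\geq 2$ for $d\geq 5$, $\lambda\neq (d),(d-1,1)$, and checks the remaining shapes by hand. More importantly, the case analysis you postponed cannot actually be completed, and the ``sign-type'' shapes you flagged are the reason: for $\lambda=(1^d)$ one has $p((1^d))=1$, whereas for odd $m$
\[
\chi_{(1^d)}\bigl((m,1^{d-m})\bigr)-\chi_{(1^d)}\bigl((m+1,1^{d-m-1})\bigr)=(-1)^{m-1}-(-1)^{m}=2,
\]
so already $d=4$, $m=3$ gives $2\not\leq 1=p((1^4))$. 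Thus the inequality as stated is false at the sign partition; note that the paper's auxiliary claim that $p(\lambda)\geq 2$ for $d\geq 5$ and $\lambda\neq(d),(d-1,1)$ fails for $(1^d)$ as well, so this oversight is shared by the paper's proof, not only by your sketch. Any correct version must exclude $\lambda=(1^d)$ or weaken the bound there; the downstream application in the proof of Lemma~\ref{lem:repr.lemma.for.maximality} can still be salvaged, since for the sign character the inequality actually needed there, $\chi_\lambda((e',1^m))-\chi_\lambda(e)\leq m\,p(\lambda)$, holds directly because its left-hand side is at most $2\leq m$. In short: your reduction matches the paper, but you stopped exactly where the real content lies, and the step you left to do is not doable as stated.
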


\begin{proof}
By induction on $d$ and restricting to $S_{d-1}$ we only have to check the case that $m+1 = d$.  By the Murnaghan--Nakayama rule~\cite[Thm.\,4.10.2]{sagan} we have that $\chi_\lambda((d))$ is $0$ if $\lambda$ is not a hook, while it is $(-1)^{\hto(\lambda)}$ if it is a hook. Let $\lambda' = \lambda/(1)$, i.e.\ we remove the upper-left cell from $\lambda$. Then similarly, $\chi_\lambda((d-1,1))$ is $0$ if $\lambda'$ is not a border strip, while it is $(-1)^{\hto(\lambda')}$ if is a border strip. In particular, we have that
\[
\chi_\lambda( (d-1,1) ) - \chi_\lambda( (d) ) \leq 2.
\]
On the other hand, by induction we have for $d\geq 5$ and $\lambda\neq (d), (d-1,1)$ that $p(\lambda)\geq 2$. The cases that $d\leq 4$ or $\lambda = (d), (d-1,1)$ are easy to check by hand, and the result follows. 
\end{proof}

\begin{proof}[Proof of Lemma~\ref{lem:repr.lemma.for.maximality}]
We will prove this lemma by induction on $d$ and the number of fixed points of $e$. The case $d\leq 2$ is easily checked by hand, so assume that $d\geq 3$.

First, note that the right-hand side in equation~\eqref{eq:repr.inequality.maximality} is equal to
\[
\frac{d - \#\mathrm{Fix}(e)}{2},
\]
where $\# \mathrm{Fix}(e)$ is the number of fixed points of the permutation $e$. 

Assume first that $e$ has a fixed point. Then we may assume that $e\in S_{d-1}$. By induction on $d$ we have for every $V_\gamma\in \Res_{S_{d-1}}^{S_d} V_\lambda$ that
\[
\frac{\dim V_\gamma - \chi_\gamma(e)}{\dim V_\gamma - \chi_\gamma ((12))} \leq \frac{d-\#\mathrm{Fix}(e)}{2}.
\]
Hence by Young's rule, we will also have that
\[
\frac{\dim V_\lambda - \chi_\lambda(e)}{\dim V_\lambda - \chi_\lambda ((12))} \leq \frac{d-\#\mathrm{Fix}(e)}{2}.
\]
Moreover, if for some $V_\gamma\in \Res_{S_{d-1}}^{S_d} V_\lambda$ there is a strict inequality here, then the same holds for $\lambda$.

Now assume that $e$ has no fixed points. Write $e = (e_1, \ldots, e_s)$, with $e_1\geq \ldots \geq e_s$, $e' = (e_1, \ldots, e_{s-1})$ and $e_s = m\geq 2$. Rewriting equation~\eqref{eq:repr.inequality.maximality}, we wish to prove that
\[
\chi_\lambda(1^d) - \chi_\lambda(e) \leq p(\lambda) (d-\#\mathrm{Fix}(e)) = p(\lambda) d.
\]
By induction on $\# \mathrm{Fix}(e)$, we have for the left hand side
\begin{equation}\label{eq:induction.inequality.repr.maximality}
\chi_\lambda(1^d) - \chi_\lambda((e', 1^m)) + \chi_\lambda((e', 1^m)) - \chi_\lambda(e) \leq p(\lambda)(d-m) + \chi_\lambda((e', 1^m)) - \chi_\lambda(e).
\end{equation}
Hence it is enough to prove that
\[
\chi_\lambda((e', 1^m)) - \chi_\lambda(e) \leq mp(\lambda).
\]
By the Murnaghan--Nakayama rule~\cite[Thm.\,4.10.2]{sagan}, the left hand side is equal to 
\[
\sum_{\lambda'\vdash d-m} c_{\lambda, \lambda'} \chi_{\lambda'}(e),
\]
where the sum is over all partitions of $d-m$ and
\[
c_{\lambda, \lambda'} = \mult (\lambda', \Res_{S_{d-m}}^{S_d}V_\lambda) + \begin{cases}
(-1)^{\hto(\lambda/\lambda') + 1} & \text{ if $\lambda/\lambda'$ is a border strip,} \\
0 &  \text{ if $\lambda/\lambda'$ is not a border strip.}
\end{cases}
\]
Since $|\chi_{\lambda'}(e')| \leq \dim V_{\lambda'}$ and $c_{\lambda, \lambda'}\geq 0$, it is enough to prove that 
\begin{equation}\label{eq:maximality.dim.inequality}
\sum_{\lambda' \vdash d-m} c_{\lambda, \lambda'} \dim V_{\lambda'} \leq mp(\lambda).
\end{equation}
As in the proof of Lemma~\ref{lem:plambda.reading.words} denote by $M_1, M_2$ and $M_3$ the partitions $\nu$ of $d-2$ which are obtained from $\lambda$ by removing two horizontally adjacent cells, two vertically adjacent cells and two non-adjacent cells respectively. Then using the characterization of $p(\lambda)$ from the proof of Lemma~\ref{lem:plambda.reading.words} and rewriting equation~\eqref{eq:maximality.dim.inequality}, we wish to prove that
\ytableausetup{smalltableaux}
%\begin{align}\label{eq:repr.bst.inequality}
%(m-1) \sum_{\lambda/\nu = \vtabl} &\dim V_\nu + (m-2) \sum_{\lambda/\nu = 2\btabl} \dim V_\nu  - \sum_{\lambda/\nu = \htabl} \dim V_\nu \\
%&\geq \sum_{\substack{\lambda'\vdash d-m \\ \lambda/\lambda' \text{ border strip}}} (-1)^{\hto(\lambda/\lambda') + 1} \dim V_{\lambda'}, \nonumber
%\end{align}
\begin{align}\label{eq:repr.bst.inequality}
(m-1) \sum_{\nu \in M_1} &\dim V_\nu + (m-2) \sum_{\nu \in M_3} \dim V_\nu  - \sum_{\nu \in M_2} \dim V_\nu \\
&\geq \sum_{\substack{\lambda'\vdash d-m \\ \lambda/\lambda' \text{ border strip}}} (-1)^{\hto(\lambda/\lambda') + 1} \dim V_{\lambda'}. \nonumber
\end{align}

We prove this inequality by induction on $m$. For $m=2$, it is straightforward to check, so assume that $m\geq 3$. The Murnaghan--Nakayama rule~\cite[Thm.\,4.10.2]{sagan} shows that the right hand side of equation~\eqref{eq:repr.bst.inequality} is equal to $-\chi_\lambda( (m, 1^{d-m}) )$. By induction on $m$, it is therefore enough to prove that
%\[
%\chi_\lambda( (m-1, 1^{d-m}) ) - \chi_\lambda( (m, 1^{d-m}) ) \leq \sum_{\lambda/\nu = \vtabl} \dim V_\nu + \sum_{\lambda/\nu = 2\btabl} \dim V_\nu = p(\lambda).
%\]
\[
\chi_\lambda( (m-1, 1^{d-m}) ) - \chi_\lambda( (m, 1^{d-m}) ) \leq \sum_{\nu \in M_1} \dim V_\nu + \sum_{\nu \in M_3} \dim V_\nu = p(\lambda).
\]
But this is precisely Lemma~\ref{lem:repr.bounding.char.by.plambda}, proving the desired inequality.

Now assume that we have equality, i.e.\
\[
\chi_\lambda((1^d)) - \chi_\lambda(e) = p(\lambda)(d-\# \mathrm{Fix}(e)).
\]
If $e$ has a fixed point, then we can reason similarly as above to conclude by induction on $d$. So suppose that $e$ has no fixed points. Then in equation~\ref{eq:induction.inequality.repr.maximality} we must have that 
\[
\chi_\lambda((1^d)) - \chi_\lambda((e', 1^m)) = p(\lambda)(d-m).
\]
By induction on $m$, this means that either we have that $\lambda = (d), (d-1,1)$, or that $e' = (1^{d-m}), (2,1^{d-m-2})$, or that $\lambda = (a,b)$ and $e' = (3, 1^{d-m-3})$. If $\lambda = (d), (d-1,1)$ then we don't have to check anything, so we go through the other cases. 

Assume that $e' = (1^{d-m})$. If $m<d$, then $2\leq m=e_s \leq e_{s-1}=1$ gives a contradiction. So $m=d$ and $e = (d)$. But in that case, it follows by induction on $d$, that for $d\geq 4$ we have that
\[
\chi_\lambda((1^d)) - \chi_\lambda((d)) \leq 3 p (\lambda) < dp(\lambda),
\]
as desired. The cases $d\leq 3$ are easy to check explicitly.

Now assume that $e' = (2, 1^{d-m-2})$. If $d-m-2>0$ then again $m = e_s \leq 1$, contrary to assumption. So $d-m-2 = 0$ and $m = 2$. This implies that $d=4$, and all cases there are easy to do explicitly.

Finally, assume that $e' = (3, 1^{d-m-3})$. Again, if $d-m-3>0$ then we obtain a contradiction. Hence $d\leq 6$ and $m\leq 3$. All cases here are easy to check, proving the result.

\end{proof}

\ytableausetup{nosmalltableaux}

\subsection{Higher Specht polynomials}\label{sec:higher.specht}

Consider the ring $F[x_1, \ldots, x_d]$ with its natural $S_d$-action permuting the variables. Let $\lambda$ be a partition of $d$. Then the \emph{Specht polynomial}~\cite{specht} corresponding to $\lambda$ is a certain explicit polynomial in $F[x_1, \ldots, x_d]$ generating a representation isomorphic to $V_\lambda$. Moreover, the degree of this polynomial will be the lowest degree in which a copy of $V_\lambda$ appears in the $S_d$-representation $F[x_1, \ldots, x_d]$~\cite[p.\,95]{campbellinvariant}.

We will need results concerning \emph{higher Specht polynomials}, which are a generalization of these Specht polynomials. Consider the \emph{ring of coinvariants}
\[
\Rco = \frac{F[x_1, \ldots, x_d]}{(s_1, \ldots, s_d)},
\]
where the $s_i$ are the elementary symmetric polynomials in the $x_i$, i.e.\ $s_1 = x_1 + x_2 + \ldots x_d$, $s_2 = x_1x_2 + x_1x_3 + \ldots$ and so on. The symmetric group $S_d$ still acts on this ring by permuting the variables $x_i$, turning it into an $S_d$-representation. It is known that with this $S_d$-action, $\Rco$ is isomorphic to the regular representation $F[S_d]$~\cite{stanley_coinvariants}. Hence as an $S_d$-representation it decomposes as $\Rco = \bigoplus_\lambda W_\lambda$ where $W_\lambda \cong V_\lambda^{\dim V_\lambda}$ is the isotypic component corresponding to $V_\lambda$. The higher Specht polynomials will give us an explicit decomposition for $\Rco$ into its irreducible components $V_\lambda$. In more detail, to each pair of standard Young tableaux $(S,T)$ of the same shape $\lambda$, we will associate an element $F_T^S \in \Rco$ generating a copy of $V_\lambda$, and all these elements together will form an $F$-basis for $\Rco$.

So let $\lambda$ be a partition of $d$ and let $S, T\in\ST(\lambda)$. Define the monomial
\[
x_T^{\chw S} = \prod_{i=1}^d x_i^{\chw(i, S, T)},
\]
where $\chw(i,S,T)$ is the subscript of the charge word of $S$, at the cell in $S$ corresponding to the cell containing $i$ in $T$. For example, for $S, T$ as given
\begin{center}
$S = $
\begin{ytableau}
1 & 3 & 5 & 7 \\
2 & 4 \\
6
\end{ytableau}
$\quad\chw(S) = $
\begin{ytableau}
0 & 1 & 2 & 3 \\
1 & 2 \\
3
\end{ytableau}
$\quad T = $
\begin{ytableau}
1 & 2 & 3 & 6 \\
4 & 5 \\
7
\end{ytableau}
\end{center}
we have that $\chw(S)= 6_3 2_1 4_2 1_0 3_1 5_2 7_3$ and that 
\[
x_T^{\chw S} = x_2 x_3^2 x_4 x_5^2 x_6^3 x_7^3.
\]
Let $C(T)\subset S_d$ be the group of \emph{column permutations of $T$}, consisting of those permutations in $S_d$ which map every integer $i\in \{1, \ldots, d\}$ to another in the same column of $T$. Similarly define the \emph{row permutations of $T$}, denoted by $R(T)\subset S_d$. We define 
\[
\varepsilon_T = \sum_{\tau \in C(T)} \sum_{\sigma \in R(T)} \sgn(\tau) \tau \sigma \in F[S_d].
\]
Then the \emph{higher Specht polynomial $F_T^S$} is defined as $F_T^S = \varepsilon_T x_T^{\chw S}\in \Rco$. 

Note that $F_T^S$ is a $\ZZ$-linear combination of conjugates of the element
\[
x_1^{s(S, 1)}x_2^{s(S,2)} \cdots x_d^{s(S, d)}.
\]
The main theorem about higher Specht polynomials is the following canonical decomposition of $\Rco$ into its irreducible components.

\begin{theorem}{\cite[Thm.\,1]{higher_specht_2}}\label{thm:higher.specht.poly}
Let $d$ be a positive integer.
\begin{enumerate}
\item For every partition $\lambda$ of $d$ and every $S\in \ST(\lambda)$, the space
\[
\operatorname{span}\{ F_T^S \mid T\in\ST(\lambda)\} \subset \Rco
\]
is an irreducible $S_d$-representation isomorphic to $V_\lambda$.
\item The set $\{F_T^S\mid S, T \in\ST(\lambda)\}$ is an $F$-basis of $\Rco$.
\end{enumerate}
\end{theorem}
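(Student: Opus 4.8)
The plan is to reduce both parts to a single linear-independence statement, proved by a leading-term computation, after first pinning down the $S_d$-module structure abstractly.

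I would begin with the dimension bookkeeping. Since $\Rco \cong F[S_d]$ as an $S_d$-representation, we have $\dim_F \Rco = d! = \sum_{\lambda\vdash d} (\dim V_\lambda)^2$, and the right-hand side is exactly the number of pairs $(S,T)$ of standard tableaux of a common shape. Consequently, for part (2) it suffices to prove that the $d!$ elements $F_T^S$ are $F$-linearly independent: the correct number of independent vectors is automatically a basis. Likewise, once each space $\Span\{F_T^S \mid T\in\ST(\lambda)\}$ is shown to be isomorphic to $V_\lambda$, the count forces these $\dim V_\lambda$-many copies (as $S$ ranges over $\ST(\lambda)$, and $\lambda$ over partitions) to exhaust the isotypic decomposition $\Rco = \bigoplus_\lambda V_\lambda^{\dim V_\lambda}$.

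Next I would establish the equivariance that drives part (1). Identifying $\varepsilon_T$ as the Young symmetrizer of $T$, the conjugation rule $\pi\varepsilon_T\pi^{-1} = \varepsilon_{\pi T}$ together with $\pi\cdot x_T^{\chw S} = x_{\pi T}^{\chw S}$ (permuting the variables permutes the cells carrying the exponents read off from $S$) gives
\[
\pi \cdot F_T^S = F_{\pi T}^S \qquad (\pi\in S_d),
\]
where now $\pi T$ is an arbitrary, not necessarily standard, filling. Fix a base tableau $t_0\in\ST(\lambda)$ and use that, in characteristic $0$ or $>d$, the left ideal $F[S_d]\varepsilon_{t_0}$ is a copy of the irreducible $V_\lambda$ (as $\varepsilon_{t_0}^2 = \tfrac{d!}{\dim V_\lambda}\varepsilon_{t_0}$ with the scalar invertible). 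The $S_d$-equivariant map
\[
\psi\colon F[S_d]\varepsilon_{t_0}\to \Rco,\qquad y\varepsilon_{t_0}\mapsto y\varepsilon_{t_0}\cdot x_{t_0}^{\chw S} = y\cdot F_{t_0}^S,
\]
then has image $\Span\{F_{\pi t_0}^S\mid \pi\in S_d\}$, which contains every $F_T^S$ with $T\in\ST(\lambda)$. Since $V_\lambda$ is irreducible, $\operatorname{im}\psi$ is either $0$ or isomorphic to $V_\lambda$; in particular it is $S_d$-stable of dimension at most $\dim V_\lambda$. Thus, as soon as the $\dim V_\lambda$ elements $\{F_T^S : T\in\ST(\lambda)\}$ are known to be linearly independent, they must fill out $\operatorname{im}\psi$ exactly, forcing $\operatorname{im}\psi\cong V_\lambda$ and yielding part (1). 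This route conveniently avoids proving directly that the span of the standard $F_T^S$ is stable: it absorbs the Garnir/straightening relations into the dimension count.

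Everything therefore hinges on the global linear independence of $\{F_T^S\}$, which I expect to be the main obstacle and the genuinely combinatorial heart of the argument. All $F_T^S$ with a fixed $S$ are homogeneous of the same degree $\ch(S) = \sum_i s(S,i)$, so it is natural to work degree by degree, the graded multiplicity of $V_\lambda$ in degree $n$ being $\#\{S\in\ST(\lambda):\ch(S)=n\}$. Concretely I would lift the $F_T^S$ to $F[x_1,\ldots,x_d]$, fix a monomial order, and compute the leading monomial of each $F_T^S$: the column-antisymmetrization in $\varepsilon_T$ singles out, among the conjugates of $x_1^{s(S,1)}\cdots x_d^{s(S,d)}$, a distinguished top monomial built from the exponents of $S$ placed along $T$. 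The crux is to verify that the assignment $(S,T)\mapsto \operatorname{lead}(F_T^S)$ is injective, using the standardness of $S$ and $T$ and the bijective nature of the charge-word encoding. Distinct leading monomials then give at once the nonvanishing $F_T^S\neq 0$ used above and the global linear independence, and hence, via the dimension count, both part (1) and part (2).
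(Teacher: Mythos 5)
The paper does not prove this theorem at all: it is quoted verbatim from the literature (Terasoma--Yamada, \cite{higher_specht_2}), so your proposal must stand on its own. Its module-theoretic skeleton is correct and well thought out: the identity $\pi\cdot F_T^S=F_{\pi T}^S$, the equivariant map $\psi$ out of the left ideal $F[S_d]\varepsilon_{t_0}$ (irreducible because $d!$ is invertible in $F$), and the count $\dim_F \Rco=d!=\#\{(S,T)\}$ do correctly reduce both parts to a single claim, namely that the $d!$ elements $F_T^S$ are linearly independent \emph{in $\Rco$}.

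The genuine gap is in how you propose to prove that claim. Your leading-monomial computation takes place in the lifts $\tilde F_T^S\in F[x_1,\ldots,x_d]$; distinct leading monomials show that the \emph{lifts} are independent in the polynomial ring, which says nothing about their images in the quotient $\Rco=F[x_1,\ldots,x_d]/(s_1,\ldots,s_d)$: a nontrivial combination of the lifts could lie in the ideal, and even the nonvanishing $F_T^S\neq 0$ in $\Rco$ (which part (1) needs) does not follow. To descend to the quotient by this method you must in addition show that the leading monomials avoid the initial ideal of $(s_1,\ldots,s_d)$ --- e.g.\ for lex with $x_1>\cdots>x_d$ this initial ideal is $(x_1,x_2^2,\ldots,x_d^d)$, with standard monomials the $d!$ monomials $x_1^{e_1}\cdots x_d^{e_d}$, $e_i<i$ --- and only then does ``any combination has a standard leading monomial, hence is not in the ideal'' go through. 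This is not a formality, and the hoped-for injectivity is delicate: already for $d=3$, $\lambda=(2,1)$, take $S$ the tableau with rows $1\,3$ and $2$ (charge subscripts $0,1,1$). Then \emph{both} standard $T$ give the same distinguished monomial $x_T^{\chw S}=x_2x_3$, and one computes $F_{T}^S=x_2x_3-x_1x_2$ resp.\ $x_2x_3-x_1x_3$. For lex $x_1>x_2>x_3$ the leading monomials $x_1x_2$, $x_1x_3$ are distinct but both lie in the initial ideal $(x_1,x_2^2,x_3^3)$, so nothing follows in $\Rco$; for the reversed order the leading monomials coincide. So the step you label ``the crux'' is not merely unverified --- as formulated (injectivity of the leading-monomial map) it is the wrong invariant, and the actual combinatorial heart of the theorem, which the cited proof has to work for, is missing. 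A last caution: your aside that the degree-$n$ multiplicity of $V_\lambda$ in $\Rco$ is $\#\{S\in\ST(\lambda)\mid \ch(S)=n\}$ is the Lusztig--Stanley fake-degree theorem, a result of comparable depth, so it cannot be treated as free background.
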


\subsection{The $S_d$-closure}

In this section we review the construction and basic properties of the $S_d$-closure construction of Bhargava--Satriano \cite{bhargavasatriano}. Throughout, all rings are commutative with $1$. Let $A$ be a ring of rank $d$ over a base ring $B$, by which we mean a $B$-algebra which is free of rank $d$ as a $B$-module. For us, $B$ will typically be $k[t]$ and $A$ will be the integral closure of $k[t]$ inside a function field over $k(t)$. The $S_d$-closure will be a certain ring $G(A/B)$ with an action of $S_d$ which is a natural model for a Galois closure of $A$ over $B$ with Galois group $S_d$. 

For an element $a\in A$ let 
\[
P_a(x) = x^d - s_1(a)x^{d-1} + s_2(a)x^{d-2} + \ldots + (-1)^d s_d(a)
\]
be the characteristic polynomial of the multiplication by $a$ map $A\to A$. We denote by $a^{(1)}, a^{(2)}, \ldots, a^{(d)}$ the elements $a\otimes 1\otimes \ldots \otimes 1, 1\otimes a\otimes 1 \otimes \ldots \otimes 1, \ldots,  1\otimes \ldots \otimes 1\otimes a$ in $A^{\otimes d}$. These will play the role of the conjugates of $a$. Denote by $I(A,B)$ the ideal in $A^{\otimes d}$ generated by all expressions
\[
s_j(a) - \sum_{1\leq i_1< i_2 < \ldots < i_j \leq d} a^{(i_1)}\cdots a^{(i_j)},
\]
for $a\in A$ and $j\in \{1, \ldots, d\}$. 

We define the \emph{$S_d$-closure of $A/B$}, denoted by $G(A/B)$ as the $B$-algebra 
\[
G(A/B) = A^{\otimes d} / I(A,B).
\]
There is a natural action of $S_d$ on $G(A/B)$ by permuting the coordinates in $A^{\otimes d}$.

There is also a naturally defined \emph{trace map}
\[
\Tr_{G(A/B)/B}: G(A/B) \to B: \alpha \mapsto \sum_{\sigma\in S_d} \sigma(\alpha).
\]

If $A/B$ is a degree $d$ extension of fields with Galois group $S_d$, then the $S_d$-closure $G(A/B)$ is simply the Galois closure of $A/B$. More generally, if $\tilde{A}$ is the Galois closure of $A/B$ and $\Gal(\tilde{A}/B) = H\subset S_d$, then 
\[
G(A/B) \cong \Ind_{H}^{S_d} \tilde{A} 
\]
as $B[S_d]$-algebras \cite[Thm.\,2]{bhargavasatriano}. As a $B$-algebra, this is simply $\tilde{A}^r$ where $r$ is the index of $H$ in $S_d$.

The most important property of the $S_d$-closure is that it commutes with base change. 

\begin{theorem}{\cite[Thm.\,1]{bhargavasatriano}}\label{thm:Sdclosure.basechange}
If $A/B$ is a ring of rank $d$ and $C$ is a $B$-algebra then there is a natural isomorphism
\[
G(A/B)\otimes_B C\cong G((A\otimes_B C)/C)
\]
as $C$-algebras.
\end{theorem}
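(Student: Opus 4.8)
The plan is to realize both sides as quotients of one and the same $C$-algebra and then to match the two ideals, reducing the entire statement to a single \emph{universal} identity. First I would fix a $B$-basis $\omega_1,\dots,\omega_d$ of $A$ with $\omega_1=1$ (possible since $1$ is unimodular) and record the canonical $C$-algebra isomorphism
\[
(A\otimes_B C)^{\otimes_C d}\;\cong\;A^{\otimes_B d}\otimes_B C ,
\]
which is $S_d$-equivariant and sends $(\omega_l\otimes 1)^{(i)}$ to $\omega_l^{(i)}\otimes 1$; this is the usual compatibility of tensor powers with base change, and it matches the permutation actions on slots. Writing $A_C=A\otimes_B C$ and using that $-\otimes_B C$ is right exact, $G(A/B)\otimes_B C$ becomes $A_C^{\otimes_C d}$ modulo the ideal generated by the image of $I(A,B)$. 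Since $G(A_C/C)=A_C^{\otimes_C d}/I(A_C,C)$, the theorem reduces to the equality of ideals
\[
I(A_C,C)\;=\;I(A,B)\cdot A_C^{\otimes_C d}.
\]

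The inclusion $\supseteq$ is immediate: because the characteristic polynomial $P_{a\otimes 1}$ over $C$ is the image of $P_a$ over $B$ and the conjugates correspond as above, the image of each generator $s_j(a)-\sum_{i_1<\dots<i_j}a^{(i_1)}\cdots a^{(i_j)}$ is precisely the defining relation of $I(A_C,C)$ attached to $a\otimes 1$. For the reverse inclusion I would pass to a universal element. Introduce indeterminates $t_1,\dots,t_d$, set $\theta=\sum_l t_l\,\omega_l\in A\otimes_B B[t_1,\dots,t_d]$, and write $r_j(\theta)=s_j(\theta)-\sum_{i_1<\dots<i_j}\theta^{(i_1)}\cdots\theta^{(i_j)}$. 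Given any $\alpha=\sum_l\alpha_l(\omega_l\otimes 1)\in A_C$, the $B$-algebra map $t_l\mapsto\alpha_l$ carries $\theta$ to $\alpha$, and since both $s_j$ and the elementary symmetric functions of the conjugates commute with ring maps, the induced map on tensor powers carries $r_j(\theta)$ to the generator $s_j(\alpha)-\sum\alpha^{(i_1)}\cdots\alpha^{(i_j)}$. Hence the reverse inclusion would follow at once from the single claim that, for every $j$, all the $t$-monomial coefficients of $r_j(\theta)$ lie in $I(A,B)$ — equivalently, that the factorization $P_\theta(x)=\prod_{i=1}^d\bigl(x-\theta^{(i)}\bigr)$ holds identically in $G(A/B)[t_1,\dots,t_d][x]$.

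The whole difficulty is concentrated in this last claim, and it is where I expect the real obstacle to lie. The factorization holds after every specialization $t_l\mapsto b_l\in B$ — these are exactly the defining relations of $I(A,B)$ — but deducing the identity in the $t_l$ is a coefficient-extraction problem that over a base of positive characteristic cannot be solved by naive interpolation: finite-difference formulas introduce denominators, and the separable locus of a rank-$d$ algebra need not be dense. To make the argument characteristic-free I would reduce to the universal base $B_0=\ZZ[\gamma_{lm}^n]/(\text{commutativity, associativity and unit relations})$ carrying the universal rank-$d$ algebra $A_0$ with its marked basis: every $(A,B)$ is a unique base change of $(A_0,B_0)$, the coefficients of $r_j(\theta)$ are the images of those computed over $B_0$, and the defining relations of $I(A,B)$ are likewise images of those in $I(A_0,B_0)$, so that coefficient-membership over $B_0$ propagates to every $B$ and hence to every $C$. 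It therefore suffices to verify the universal identity once, over $B_0$, by an explicit symmetric-function computation in the structure constants $\gamma_{lm}^n$; this computation is the genuinely technical core carried out in \cite{bhargavasatriano}, and it is what upgrades ``true at every $B$-point'' to the identity valid over an arbitrary base ring.
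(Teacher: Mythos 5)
First, a caveat on the comparison itself: the paper does not prove this statement — it is imported verbatim as \cite[Thm.\,1]{bhargavasatriano} — so there is no paper-internal proof to match; your attempt can only be judged on its own. Its scaffolding is correct and is the standard framing: the $S_d$-equivariant identification $(A\otimes_B C)^{\otimes_C d}\cong A^{\otimes_B d}\otimes_B C$, right-exactness of $-\otimes_B C$ to present $G(A/B)\otimes_B C$ as $A_C^{\otimes_C d}$ modulo the extended ideal, the compatibility of characteristic polynomials with base change (which correctly gives the inclusion $I(A_C,C)\supseteq I(A,B)\cdot A_C^{\otimes_C d}$, and also disposes of base change along surjections $C'\twoheadrightarrow C$), and the reduction of the reverse inclusion to the generic element $\theta=\sum_l t_l\,\omega_l$.

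The genuine gap is exactly at the point you flag and then delegate: the claim that every $t$-monomial coefficient of $r_j(\theta)$ lies in $I(A,B)$ — equivalently that $P_\theta(x)=\prod_{i=1}^d(x-\theta^{(i)})$ holds identically in $G(A/B)[t_1,\dots,t_d][x]$ — is not a routine verification that can be outsourced; it is equivalent to the theorem. Indeed, had one \emph{defined} the ideal by the generic relations, base change would be formal, and the entire content of \cite[Thm.\,1]{bhargavasatriano} is that the ideal generated by the $B$-point values $r_j(a)$, $a\in A$, coincides with this generic ideal; your final sentence concedes that this ``technical core'' is taken from the reference, so the attempt is a (correct) reduction, not a proof. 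Moreover, the universal-base step buys nothing: specialization from $B_0=\ZZ[\gamma_{lm}^n]/(\text{relations})$ is valid, but the coefficient-extraction problem over $B_0$ is precisely as hard as the general case — $B_0$ is not a domain and has no evident torsion-freeness or dense separable locus that would legitimize interpolation, and polarization identities only recover multilinear coefficients integrally (inclusion–exclusion over subsets of the $t_l$), while non-multilinear coefficients such as that of $t_1^2t_2$ incur factorial denominators; Bhargava and Satriano in fact argue directly over an arbitrary base with symmetric-function identities rather than via $B_0$. A minor further slip: the assertion that a basis with $\omega_1=1$ exists ``since $1$ is unimodular'' is unjustified — $1$ is indeed unimodular, but unimodular elements of free modules need not extend to a basis (stably free modules need not be free); this is harmless here only because the normalization $\omega_1=1$ is never actually used in your argument.
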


For monogenic extensions and products of rings we have the following.

\begin{theorem}{\cite[Thm.\,16]{bhargavasatriano}}\label{thm:Sdclosure.monogenic}
Let $f$ be a monic polynomial of degree $d$ in $B[x]$ and let $A = B[x]/(f(x))$. Then the $S_d$-closure $G(A/B)$ is of rank $d!$ over $B$, with a basis consisting of all monomials
\[
\prod_{i=1}^d x_i^{e_i}
\]
where the exponents satisfy $0\leq e_i < i$, and $x_i$ denote the conjugates of $x=x_1$ in $G(A/B)$.
\end{theorem}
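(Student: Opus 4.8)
The plan is to identify $G(A/B)$ with the \emph{universal splitting algebra} of $f$ and then read off its standard monomial basis. Write $\sigma_j(y_1,\dots,y_d)=\sum_{i_1<\cdots<i_j}y_{i_1}\cdots y_{i_j}$ for the $j$-th elementary symmetric polynomial. First I would record that, since $A=B[x]/(f(x))$, tensoring over $B$ gives a canonical identification
\[
A^{\otimes d}\cong B[x_1,\dots,x_d]/(f(x_1),\dots,f(x_d)),
\]
under which $x_i$ is precisely the conjugate $a^{(i)}$ for $a=x$. Taking $a=x$ in the generators of $I(A,B)$, and using that the characteristic polynomial of multiplication by $x$ on the monogenic algebra $A$ is $f$ itself (so that $s_j(x)$ is, up to sign, the $j$-th coefficient of $f$), the resulting relations are exactly $\sigma_j(x_1,\dots,x_d)=s_j(x)$ for $j=1,\dots,d$, which is equivalent to the factorization $f(X)=\prod_{i=1}^d(X-x_i)$ holding in the quotient. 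Let $\mathfrak{a}\subseteq A^{\otimes d}$ be the ideal these particular relations generate; then $A^{\otimes d}/\mathfrak{a}$ is by definition the universal splitting algebra $R_f$ of $f$ over $B$ (note that $f(x_i)\equiv 0$ holds automatically modulo these relations, by specializing the factorization at $X=x_i$).

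The heart of the argument is to show $I(A,B)=\mathfrak{a}$, i.e.\ that the relations coming from the single element $a=x$ already force all the others. By construction $\mathfrak{a}\subseteq I(A,B)$, so it suffices to prove that every generator $s_j(a)-\sigma_j(a^{(1)},\dots,a^{(d)})$ of $I(A,B)$ vanishes in $R_f$. Here monogenicity is essential: every $a\in A$ is of the form $a=g(x)$ with $g\in B[X]$, so $a^{(i)}=g(x_i)$ and $\sigma_j(a^{(1)},\dots,a^{(d)})=\sigma_j(g(x_1),\dots,g(x_d))$ is a symmetric polynomial in $x_1,\dots,x_d$. By the fundamental theorem of symmetric polynomials it equals $Q_j(\sigma_1(x),\dots,\sigma_d(x))$ for a universal polynomial $Q_j$ over $B$, and modulo $\mathfrak{a}$ this reduces to the constant $Q_j(s_1(x),\dots,s_d(x))\in B$. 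It remains to identify this constant with $s_j(a)$, which is the factorization identity
\[
P_a(X)=\prod_{i=1}^d\bigl(X-g(x_i)\bigr)
\]
expressing that the characteristic polynomial of multiplication by $g(x)$ on $A$ splits over $R_f$ with ``roots'' $g(x_i)$. I would prove this by a universality/specialization argument: both $s_j(a)$ and $Q_j(s_1(x),\dots,s_d(x))$ are universal polynomial expressions in the coefficients of $f$ and $g$, so it suffices to verify their equality after base change to a field in which $f$ has $d$ distinct roots $\alpha_1,\dots,\alpha_d$; there $A\cong\prod_i B[x]/(x-\alpha_i)$, multiplication by $g(x)$ is diagonal with entries $g(\alpha_i)$, and the factorization is immediate. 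I expect this step --- reducing the characteristic-polynomial data of a general element to that of the generator, and pinning the reduced constant to $s_j(a)$ --- to be the main obstacle; everything before and after it is formal.

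Granting $G(A/B)\cong R_f$, the rank and basis follow from the structure theory of the universal splitting algebra, which I would establish by adjoining roots one at a time in the order $x_d,x_{d-1},\dots,x_1$. Set $R^{(1)}=B[x_d]/(f)$, free of rank $d$; over $R^{(1)}$ one has $f(X)=(X-x_d)f^{(1)}(X)$ with $f^{(1)}$ monic of degree $d-1$, so $R^{(2)}=R^{(1)}[x_{d-1}]/(f^{(1)}(x_{d-1}))$ is free of rank $d-1$ over $R^{(1)}$; peeling off linear factors in this way yields after $d$ steps a $B$-algebra that is free of rank $d(d-1)\cdots 1=d!$ and visibly satisfies $f(X)=\prod_i(X-x_i)$, hence coincides with $R_f$. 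At the $(d+1-i)$-th stage the adjoined conjugate $x_i$ is a root of a monic polynomial of degree $i$, so it contributes powers $x_i^{e_i}$ with $0\le e_i<i$ (and $x_1$, being determined by $s_1(x)-x_2-\cdots-x_d$, occurs only to the power $e_1=0$). Multiplying out the iterated bases gives exactly the monomials $\prod_{i=1}^d x_i^{e_i}$ with $0\le e_i<i$, of which there are $1\cdot 2\cdots d=d!$, completing the proof.
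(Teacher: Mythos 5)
The paper offers no proof of this statement at all: it is quoted directly from Bhargava--Satriano \cite[Thm.\,16]{bhargavasatriano}, so the only meaningful comparison is with the argument in that reference, which your proposal essentially reconstructs. Your outline is correct: for monogenic $A$ the ideal $I(A,B)$ collapses to the symmetric-function relations coming from the single generator $x$, identifying $G(A/B)$ with the universal splitting algebra $R_f$, whose standard monomial basis then gives both the rank $d!$ and the exponent bounds $0\leq e_i<i$. You also correctly isolate the one non-formal step, namely the identity $P_a(X)=\prod_i\bigl(X-g(x_i)\bigr)$; the universality argument you sketch does close it, provided it is phrased as checking the identity for \emph{generic} coefficients (i.e.\ over $\ZZ[u_1,\ldots,u_d,v_0,\ldots,v_{d-1}]$ with $f,g$ generic, where the generic $f$ is separable over the fraction field), since for an arbitrary $B$ and $f$ no base change to a field with $d$ distinct roots need exist. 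The only other point that deserves a line of justification is the final assertion that the iterated root-adjunction tower ``coincides with $R_f$'': the map $R_f\to R^{(d)}$ is clearly surjective, and to see it is an isomorphism one either verifies that $R^{(d)}$ has the same universal property as $R_f$ (using that monic polynomials are non-zero-divisors in $C[X]$, so the factorization $f=(X-x_d)f^{(1)}$ can be matched against any other ordered splitting), or shows that the $d!$ standard monomials already span $R_f$ as a $B$-module; both are standard, so this is a presentational remark rather than a gap.
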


\begin{theorem}{\cite[Thm.\,6]{bhargavasatriano}}\label{thm:Sdclosure.product}
Let $A_1, \ldots, A_\ell$ be rings of rank $d_1, \ldots, d_\ell$ over $B$. Then
\[
G(A_1\times \ldots \times A_\ell/B) \cong \Ind_{S_{d_1}\times \ldots \times S_{d_\ell}}^{S_d} \left(G(A_1/B)\otimes_B \ldots \otimes_B G(A_\ell/B)\right)
\]
as $B[S_d]$-algebras.
\end{theorem}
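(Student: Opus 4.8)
The plan is to prove the isomorphism by exhibiting an explicit orthogonal idempotent decomposition of $G(A/B)$ whose factors are permuted by $S_d$ exactly as in an induced algebra. Write $A = A_1\times\cdots\times A_\ell$ and let $\epsilon_1,\ldots,\epsilon_\ell\in A$ be the orthogonal idempotents with $\epsilon_i$ the identity of $A_i$, so that $\sum_i\epsilon_i = 1$. Inside $A^{\otimes d}$ I would form the conjugate idempotents $\epsilon_i^{(j)}$ and then, for each function $f\colon\{1,\ldots,d\}\to\{1,\ldots,\ell\}$, the element $e_f = \prod_{j=1}^d \epsilon_{f(j)}^{(j)}$. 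Using that for each fixed position $j$ the family $\{\epsilon_i^{(j)}\}_i$ is a complete set of orthogonal idempotents (the $\epsilon_i$ are, and $(-)^{(j)}$ is a ring map), one checks that the $e_f$ are orthogonal idempotents with $\sum_f e_f = 1$. Since tensor products distribute over finite products, this gives a decomposition $A^{\otimes d} = \prod_f e_f A^{\otimes d}$ with $e_f A^{\otimes d} \cong A_{f(1)}\otimes_B\cdots\otimes_B A_{f(d)}$.

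The heart of the argument is to pass to the quotient $G(A/B) = A^{\otimes d}/I(A,B)$ and see which summands survive. Here I would use the characteristic polynomial relations for the idempotents themselves: multiplication by $\epsilon_i$ on $A$ is a projector of rank $d_i$, so $P_{\epsilon_i}(x) = x^{d-d_i}(x-1)^{d_i}$, and the defining relations of $I(A,B)$ say precisely that $\prod_{j}(x-\epsilon_i^{(j)}) = P_{\epsilon_i}(x)$ in $G(A/B)[x]$. Multiplying this identity by $e_f$ and using that $\epsilon_i^{(j)}e_f$ equals $e_f$ or $0$ according as $f(j)=i$ or not, the left-hand side collapses to $x^{d-n_i}(x-1)^{n_i}e_f$, where $n_i = \#f^{-1}(i)$. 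Comparing with $x^{d-d_i}(x-1)^{d_i}e_f$ and reading off, when $n_i\neq d_i$, the coefficient at the lowest power of $x$ where the two polynomials differ — which equals $\pm1$ — forces $e_f\in I(A,B)$ unless $n_i=d_i$ for all $i$. Thus only the \emph{balanced} functions, those with $\#f^{-1}(i)=d_i$ (the ordered set partitions indexed by $S_d/(S_{d_1}\times\cdots\times S_{d_\ell})$), contribute. This step needs no hypothesis on the characteristic, since the relevant coefficient is a unit over any base ring.

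For a balanced $f$, I would identify $e_f G(A/B)$ with $G(A_1/B)\otimes_B\cdots\otimes_B G(A_\ell/B)$: restricting $P_a(x)=\prod_j(x-a^{(j)})$ to the $e_f$-component and writing $a=(a_1,\ldots,a_\ell)$, one has $a^{(j)}e_f = a_{f(j)}^{(j)}e_f$, and since $P_a = \prod_i P_{a_i}^{A_i}$ is block-diagonal, specializing $a$ to the element supported on the $i$-th factor and cancelling the resulting powers of $x$ shows that the surviving relations are exactly, block by block, the defining relations of $G(A_i/B)$ on the positions in $f^{-1}(i)$; the relations for a general $a$ are products of these and so add nothing new. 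This yields $e_fG(A/B)\cong\bigotimes_i G(A_i/B)$ compatibly with the residual block-permutation action. Finally, $S_d$ sends $e_f$ to $e_{\sigma\circ f}$ and hence permutes the balanced summands simply transitively with stabilizer $S_{d_1}\times\cdots\times S_{d_\ell}$ at the standard partition; assembling the factors produces the claimed isomorphism $G(A/B)\cong\Ind_{S_{d_1}\times\cdots\times S_{d_\ell}}^{S_d}\big(G(A_1/B)\otimes_B\cdots\otimes_B G(A_\ell/B)\big)$ of $B[S_d]$-algebras. (As a consistency check, over fields this recovers the description already recorded in the excerpt, since as a plain $B$-algebra the right-hand side is $[S_d:S_{d_1}\times\cdots\times S_{d_\ell}]$ twisted copies of $\bigotimes_i G(A_i/B)$.)

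I expect the main obstacle to be this last identification together with the equivariance bookkeeping: one must track carefully which tensor positions belong to which block, verify that the generators of $I(A,B)$ restrict to exactly the generators of $\bigotimes_i I(A_i,B)$ and nothing larger, and confirm that the $S_d$-action threads through these identifications to match the \emph{standard} induced-algebra action rather than a twist of it. An alternative would be a functor-of-points proof, interpreting $\Hom_B(G(A/B),C)$ as the set of $d$-tuples $(\phi_1,\ldots,\phi_d)$ of $B$-algebra maps $A\to C$ with $P_a(x)=\prod_i(x-\phi_i(a))$ and decomposing such tuples by which factor each $\phi_i$ hits; but making this precise over non-connected $C$ reintroduces exactly the same idempotent bookkeeping, so I would prefer the concrete decomposition above.
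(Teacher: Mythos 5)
This statement is not proved in the paper at all: it is imported verbatim from Bhargava--Satriano \cite[Thm.\,6]{bhargavasatriano}, so there is no internal proof to compare against; the comparison can only be with the original source. Your argument is correct, and it is essentially the argument of that source: decompose $A^{\otimes d}$ by the orthogonal idempotents $e_f=\prod_j \epsilon_{f(j)}^{(j)}$, use the defining relation $\prod_j(x-\epsilon_i^{(j)})=x^{d-d_i}(x-1)^{d_i}$ in $G(A/B)[x]$ to kill the unbalanced components (your observation that the lowest differing coefficient is $\pm 1$ is exactly what makes this work over an arbitrary base ring, where a naive comparison of binomial coefficients could fail to produce a unit), identify each balanced component with $G(A_1/B)\otimes_B\cdots\otimes_B G(A_\ell/B)$ by verifying both inclusions of ideals, and assemble the surviving components into the (co)induced algebra. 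Two small slips should be fixed but do not affect the structure: the $S_d$-action sends $e_f$ to $e_{f\circ\sigma^{-1}}$, not $e_{\sigma\circ f}$ (the composite $\sigma\circ f$ is not even defined, since $f$ takes values in $\{1,\ldots,\ell\}$), and the action on the balanced summands is transitive with stabilizer $S_{d_1}\times\cdots\times S_{d_\ell}$, not \emph{simply} transitive. The substantive point --- that for balanced $f$ the restriction of $I(A,B)$ to the $e_f$-component generates exactly the relations of $\bigotimes_i G(A_i/B)$, no more (general $a$ contributes only products of block relations, since $P_a=\prod_i P_{a_i}^{A_i}$) and no less (specialize to $a$ supported on one factor and cancel the power of $x$, which is injective on polynomial rings) --- is argued soundly.
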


So as $B$-algebras, this gives that 
\[
G(A_1\times \ldots \times A_\ell/B) \cong  \left(G(A_1/B)\otimes_B \ldots \otimes_B G(A_\ell/B)\right)^{\binom{d}{d_1, \ldots, d_\ell}}.
\]

\section{Scrollar invariants of representations}\label{sec:scrollars}

In this section we introduce scrollar invariants of irreducible representations (or equivalently partitions) and prove various results about them. We first set some notation.

Throughout, we fix an integer $d\geq 4$ and let $k$ be a field of characteristic zero or larger than $d$. We let $\varphi: C\to \PP^1$ be a degree $d$ cover of curves over $k$, where $C$ is a nice curve of genus $g$. We denote by $K/k(t)$ the function field extension corresponding to $\varphi$ and let $L = G(K/k(t))$ be the $S_d$-closure of this extension. This is a $k(t)$-algebra of rank $d!$, with a natural action of $S_d$. Let $\OO_K$ be the integral closure of $k[t]$ in $K$, and similarly let $\OO_{K, \infty}$ be the integral closure of $k[t^{-1}]$ in $K$. We will call $\OO_K$ (and $\OO_{K, \infty}$) the \emph{ring of integers of $K$}. We define the \emph{ring of integers of $L$} to be $\OO_L = G(\OO_K/k[t])$ and similarly $\OO_{L, \infty} = G(\OO_{K, \infty}/k[t^{-1}])$. It is important to note that $\OO_L$ is typically \emph{not} integrally closed and hence not equal to maximal order of $L$. In fact, we will later prove that this happens if and only if the morphism $\varphi: C\to \PP^1$ is simply branched. 

\subsection{The rank of the ring of integers}

Our first goal is to prove that $\OO_L$ is a ring of rank $d!$ over $k[t]$, so that we can consider $\OO_L$ as a subring of $L$. In general, it can happen that the rank of the $S_d$-closure of a rank $d$ ring extension $A/B$ is strictly larger than $d$, see \cite[Thm.\,9]{bhargavasatriano}.

\begin{proposition}
With notation as above, $\OO_L$ (resp.\, $\OO_{L, \infty}$) is a rank $d!$ ring over $k[t]$ (resp.\, $k[t^{-1}]$), which is naturally contained in $L$.
\end{proposition}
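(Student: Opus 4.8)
The plan is to establish rank $d!$ for $\OO_L = G(\OO_K/k[t])$ by combining the base-change property of the $S_d$-closure (Theorem~\ref{thm:Sdclosure.basechange}) with the explicit rank computation for monogenic extensions (Theorem~\ref{thm:Sdclosure.monogenic}) and products (Theorem~\ref{thm:Sdclosure.product}). The key observation is that rank is a local, in fact a pointwise, property: since $\OO_K$ is a rank $d$ free module over the Dedekind domain $k[t]$, it suffices to check that $G(\OO_K/k[t])$ is free of rank $d!$ after base change to each point, and this can be done by localizing and completing. The statement for $\OO_{L,\infty}$ over $k[t^{-1}]$ is identical after replacing $t$ by $t^{-1}$, so I will only treat $\OO_L$.

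\textbf{First}, I would reduce to a local statement. Because $\OO_L = G(\OO_K/k[t])$ is a quotient of $\OO_K^{\otimes d}$, which is free of rank $d^d$ over $k[t]$, it is a finitely generated $k[t]$-module, and it is $k[t]$-free if and only if it is locally free of constant rank $d!$. By Theorem~\ref{thm:Sdclosure.basechange}, for any point $\mathfrak{p}\in\Spec k[t]$ with residue field $\kappa(\mathfrak{p})$ and completed local ring $\widehat{\OO}_{k[t],\mathfrak{p}}$, base change gives
\[
G(\OO_K/k[t])\otimes_{k[t]} \widehat{\OO}_{k[t],\mathfrak{p}} \;\cong\; G\bigl((\OO_K\otimes_{k[t]} \widehat{\OO}_{k[t],\mathfrak{p}})/\widehat{\OO}_{k[t],\mathfrak{p}}\bigr).
\]
So it is enough to show the $S_d$-closure has rank $d!$ over each complete local ring $R = \widehat{\OO}_{k[t],\mathfrak{p}}$, which is a complete DVR of residue characteristic zero or larger than $d$.

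\textbf{Second}, over such a complete DVR $R$ the extension $\OO_K\otimes_{k[t]} R$ decomposes as a finite product $\prod_{i} R_i$ of complete DVRs $R_i$, each a totally ramified extension of $R$ of some degree $d_i$ with $\sum_i d_i = d$ (here I use that the residue characteristic exceeds $d$, or is zero, so every such local extension is tamely ramified and hence monogenic; the unramified part contributes factors that are themselves monogenic over $R$ by the primitive element theorem for finite separable residue extensions). Applying Theorem~\ref{thm:Sdclosure.product} to this product reduces the rank computation to the monogenic factors, and Theorem~\ref{thm:Sdclosure.monogenic} then gives that each $G(R_i/R)$ is free of rank $d_i!$ over $R$. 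The multinomial bookkeeping in Theorem~\ref{thm:Sdclosure.product}, namely the factor $\binom{d}{d_1,\ldots,d_\ell}$ times $\prod_i d_i!$, collapses to exactly $d!$, giving rank $d!$ at every point.

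\textbf{The main obstacle} I expect is the decomposition step: to invoke the monogenicity hypothesis of Theorem~\ref{thm:Sdclosure.monogenic} I must argue that each local factor $R_i$ is genuinely monogenic over $R$, and this is precisely where the characteristic condition ($0$ or $>d$) is essential, since it guarantees tame ramification and lets me write each $R_i = R[x]/(f_i)$ with $f_i$ a suitable Eisenstein-times-unramified monic polynomial. Once the rank is shown to be the constant $d!$ at every point of $\Spec k[t]$, local freeness of constant rank over the Dedekind domain $k[t]$ upgrades to global freeness, so $\OO_L$ is free of rank $d!$. Finally, to see that $\OO_L$ is \emph{naturally contained in $L$}, I would note that $L = G(K/k(t)) = G(\OO_K/k[t])\otimes_{k[t]} k(t)$ again by Theorem~\ref{thm:Sdclosure.basechange} (taking $C = k(t)$), and that the canonical map $\OO_L \to \OO_L\otimes_{k[t]}k(t) = L$ is injective because $\OO_L$ is $k[t]$-free and hence torsion-free.
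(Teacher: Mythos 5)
Your proof is correct, and it rests on the same three Bhargava--Satriano inputs as the paper's (Theorems~\ref{thm:Sdclosure.basechange}, \ref{thm:Sdclosure.monogenic} and~\ref{thm:Sdclosure.product}), but the local reduction is executed differently. The paper stays with the PID structure theorem: it writes $G(\OO_K/k[t])\cong k[t]^{r}\oplus(\text{torsion})$, gets $r=d!$ by base change to $k(t)$, and kills the torsion by contradiction --- after reducing to $k$ algebraically closed, the fiber $\OO_K/(p)$ at any prime $p$ is a product of truncated polynomial rings $k[t]/(t^{e_i})$, so Theorems~\ref{thm:Sdclosure.monogenic} and~\ref{thm:Sdclosure.product} force the fiber of $\OO_L$ to have dimension exactly $d!$, which torsion would violate. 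You instead complete at each closed point and decompose $\OO_K\otimes_{k[t]}\widehat{k[t]}_{\mathfrak{p}}$ into complete DVRs; the price is that monogenicity of each factor is now a genuine theorem about complete DVRs with separable residue extension (this is where your characteristic hypothesis enters; note also that the factors are not ``totally ramified'' in general --- they can have residue field extensions --- though your parenthetical essentially repairs this), and you additionally need faithfully flat descent of freeness from $\widehat{k[t]}_{\mathfrak{p}}$ down to $k[t]_{\mathfrak{p}}$. What each approach buys: the paper's fiberwise computation keeps the monogenicity input trivial (truncated polynomial rings are visibly monogenic) at the cost of the reduction to $k^{\alg}$ and the contradiction bookkeeping; yours is more direct, avoids passing to the algebraic closure, and proves freeness pointwise rather than ruling out torsion, at the cost of heavier local algebra. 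Both arguments conclude the containment $\OO_L\subset L$ the same way, from torsion-freeness of a free module.
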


In other words, $\OO_L$ is an order in $L$, which is not necessarily maximal.

\begin{proof}
Since $k[t]$ is a PID, we can decompose $G(\OO_K/k[t])$ as a $k[t]$-module as follows
\[
G(\OO_K/k[t]) \cong k[t]^r \oplus \frac{k[t]}{(q_1)}\oplus \ldots \oplus \frac{k[t]}{(q_n)}
\]
where $q_i\neq 0$ for any $i$. By Theorem~\ref{thm:Sdclosure.basechange}, if we base change $G(\OO_K/k[t])$ to $k(t)$ we obtain $L$, which is a ring of rank $d!$ over $k(t)$. Hence $r=d!$. We wish to show that there is no torsion, i.e.\ $n=0$. For this, note that $G(\OO_K/k[t])$ has torsion if and only if $G(\OO_K/k[t])\otimes_{k[t]} k^{\alg} [t]$ has torsion, where $k^{\alg}$ is an algebraic closure of $k$. So we may assume that $k$ is algebraically closed. Suppose that $n>0$ and let $p$ be a prime in $k[t]$ dividing $q_1$. Then $G(\OO_K/k[t])\otimes_{k[t]} k[t]/(p)$ is a ring of rank strictly larger than $d!$. Now, there are integers $e_1, \ldots, e_m$ such that, as $k[t]/(p)$-algebras, we have that
\[
\frac{\OO_K}{(p)} \cong \frac{k[t]}{(t^{e_1})} \oplus \ldots \frac{k[t]}{(t^{e_m})}.
\]
But then $G(\frac{\OO_K}{(p)} / \frac{k[t]}{(p)} )$ is the $S_d$-closure of a product of monogenic extensions, and so by Theorems \ref{thm:Sdclosure.monogenic} and \ref{thm:Sdclosure.product} this is a ring of rank $d!$. This is a contradiction, showing that $G(\OO_K/k[t])$ is of rank $d!$ and the natural morphism $G(\OO_K/k[t])\to L$ is injective.
\end{proof}

\subsection{Scrollar invariants}

Let $V$ be a $k(t)$-vector space inside $L$ of dimension $n$ and put $\OO_V = V\cap \OO_L$ and $\OO_{V, \infty} = V\cap \OO_{L, \infty}$. Then $\OO_V$ is a free $k[t]$-module of rank $n$, and similarly $\OO_{V, \infty}$ is a free $k[t^{-1}]$-module of rank $n$. Hence by \cite[Cor.\,4.3]{hessRR} there exist unique integers $0\leq r_1 \leq \ldots \leq r_n$ together with a $k[t]$-basis $v_1, \ldots, v_n$ of $\OO_V$ such that
\[
t^{-r_1}v_1, \ldots, t^{-r_n}v_n
\]
is a $k[t^{-1}]$-basis of $\OO_{V, \infty}$. We call the $r_i$ the \emph{scrollar invariants} of $V$ and a basis $v_1, \ldots, v_n$ a \emph{reduced basis}. The scrollar invariants of $K$ are $0, e_1, \ldots, e_{d-1}$, where the $e_i$ are the usual scrollar invariants associated to the morphism $\varphi: C\to \PP^1$ as in equation~\eqref{eq:splitting.varphi*OC}.

Recall that $L$ is isomorphic to the regular representation $k(t)[S_d]$. We follow the notation from before and denote for $\lambda\vdash d$ the corresponding isotypic component in $L$ by $W_\lambda$. We set $\OO_\lambda = W_\lambda \cap \OO_L$ and similarly $\OO_{\lambda, \infty} = W_\lambda\cap \OO_{L, \infty}$. We will be interested in the scrollar invariants of $W_\lambda$. Crucial to our discussion is the following.

\begin{theorem}
The scrollar invariants of $W_\lambda$ form a multi-set of the form
\[
e_{\lambda, 1}, \ldots, e_{\lambda, 1}, \qquad e_{\lambda, 2}, \ldots, e_{\lambda, 2}, \qquad e_{\lambda, \dim V_\lambda}, \ldots, e_{\lambda, \dim V_\lambda},
\]
where each block contains $\dim V_\lambda$ elements.
\end{theorem}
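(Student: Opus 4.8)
The plan is to exploit the $S_d$-action, which is present throughout the construction, in order to reduce the statement to Grothendieck's splitting theorem for an ordinary rank-$\dim V_\lambda$ bundle. First I would record that $\OO_L = G(\OO_K/k[t])$ is $S_d$-stable: the permutation action on $\OO_K^{\otimes d}$ preserves the ideal $I(\OO_K, k[t])$, since its generators $s_j(a) - \sum_{i_1<\cdots<i_j} a^{(i_1)}\cdots a^{(i_j)}$ are symmetric in the conjugates and hence $S_d$-invariant. The same holds at infinity for $\OO_{L,\infty}$. Consequently $\OO_\lambda = W_\lambda\cap \OO_L$ and $\OO_{\lambda,\infty} = W_\lambda\cap\OO_{L,\infty}$ are $S_d$-stable lattices in $W_\lambda$, and the vector bundle $\mathcal{W}_\lambda$ on $\PP^1$ obtained by gluing $\OO_\lambda$ over $\Spec k[t]$ to $\OO_{\lambda,\infty}$ over $\Spec k[t^{-1}]$ is $S_d$-equivariant for the trivial action on the base. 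By the definition of scrollar invariants via a reduced basis, the multiset of scrollar invariants of $W_\lambda$ is precisely the splitting type of $\mathcal{W}_\lambda$.

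Next I would decompose $\mathcal{W}_\lambda$ as an equivariant bundle. Because $k$ has characteristic $0$ or larger than $d$, the order $d! = |S_d|$ is invertible in $k$, hence in both $k[t]$ and $k[t^{-1}]$; therefore the central idempotent $\pi_\lambda\in\QQ[S_d]$ projecting onto the $V_\lambda$-isotypic part lies in $\ZZ[1/d!][S_d]$ and acts on the integral lattices $\OO_\lambda$ and $\OO_{\lambda,\infty}$. Forming the multiplicity sheaf $\mathcal{M}_\lambda = \mathcal{H}om_{S_d}(V_\lambda\otimes_k \OO_{\PP^1},\, \mathcal{W}_\lambda)$, a locally free sheaf on $\PP^1$ carrying the trivial $S_d$-action, I claim the evaluation morphism $V_\lambda\otimes_k \mathcal{M}_\lambda \to \mathcal{W}_\lambda$ is an isomorphism of equivariant bundles. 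This can be checked over $\Spec k[t]$ and $\Spec k[t^{-1}]$ separately, where it amounts to the statement that a $V_\lambda$-isotypic module over the semisimple group algebra is canonically $V_\lambda \otimes (\text{its multiplicity space})$. Since $W_\lambda \cong V_\lambda^{\dim V_\lambda}$ as an $S_d$-representation, the bundle $\mathcal{M}_\lambda$ has rank $\dim V_\lambda$.

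Finally, Grothendieck's theorem gives a splitting $\mathcal{M}_\lambda \cong \bigoplus_{i=1}^{\dim V_\lambda} \OO_{\PP^1}(-e_{\lambda,i})$ for unique integers $e_{\lambda,1}\leq\cdots\leq e_{\lambda,\dim V_\lambda}$, and tensoring with $V_\lambda$ yields
\[
\mathcal{W}_\lambda \cong \bigoplus_{i=1}^{\dim V_\lambda} \bigl(V_\lambda\otimes_k \OO_{\PP^1}(-e_{\lambda,i})\bigr) \cong \bigoplus_{i=1}^{\dim V_\lambda} \OO_{\PP^1}(-e_{\lambda,i})^{\oplus \dim V_\lambda},
\]
so the splitting type, and hence the multiset of scrollar invariants of $W_\lambda$, consists of each $e_{\lambda,i}$ with multiplicity $\dim V_\lambda$, which is exactly the asserted block form. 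I expect the genuinely delicate point to be the second paragraph: one must ensure the isotypic decomposition is valid integrally and simultaneously at both $0$ and $\infty$ — not merely generically over $k(t)$ — so that $\mathcal{M}_\lambda$ is an honest vector bundle and the evaluation map is an isomorphism of bundles rather than only of generic fibres. This is precisely where the hypothesis on the characteristic enters, guaranteeing that $\pi_\lambda$ preserves the two lattices and that the decomposition glues across $\PP^1$; equivalently, one can phrase the argument as the construction of an $S_d$-equivariant reduced basis for the pair $(\OO_\lambda, \OO_{\lambda,\infty})$.
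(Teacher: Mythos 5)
Your proposal is correct, and it proves exactly what is claimed. For comparison: the paper's own ``proof'' is just a deferral to \cite[Sec.\,5]{syzrep}, where the block structure is obtained by working directly with reduced bases; the mechanism there is the same two inputs you isolate ($S_d$-stability of both orders $\OO_\lambda$, $\OO_{\lambda,\infty}$ inside $L$, and invertibility of $d!$ in $k$), but run concretely: for each integer $m$, the $k(t)$-span of those elements of $\OO_\lambda$ whose invariant is at most $m$ is an $S_d$-subrepresentation of $W_\lambda\cong V_\lambda^{\dim V_\lambda}$, hence has dimension divisible by $\dim V_\lambda$, which already forces each value to occur in blocks of size $\dim V_\lambda$. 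Your route repackages this structurally: glue the two lattices into an $S_d$-equivariant bundle $\mathcal{W}_\lambda$ on $\PP^1$, peel off the multiplicity bundle $\mathcal{M}_\lambda=\mathcal{H}om_{S_d}(V_\lambda\otimes_k\OO_{\PP^1},\mathcal{W}_\lambda)$, and apply Grothendieck splitting to $\mathcal{M}_\lambda$. What this buys is an intrinsic characterization -- the scrollar invariants of $\lambda$ are precisely the splitting type of the multiplicity bundle -- which is essentially the parabolic/Mehta--Seshadri viewpoint the paper itself mentions in Section~1.2 as an alternative approach; the reduced-basis argument is more elementary and needs no sheaf-theoretic bookkeeping. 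One point in your second paragraph deserves sharper wording: over the charts you are dealing with modules over $k[t][S_d]$, which is \emph{not} a semisimple ring, so ``a $V_\lambda$-isotypic module over the semisimple group algebra decomposes canonically'' is not literally the statement you need. What makes the evaluation map an isomorphism integrally is that $V_\lambda$ is \emph{absolutely} irreducible (true for Specht modules in characteristic $0$ or $>d$), so that $\pi_\lambda k[t][S_d]\cong M_{\dim V_\lambda}(k[t])$ is a matrix algebra and Morita equivalence applies to \emph{all} modules over it, not just free or semisimple ones. With that standard fact made explicit, your argument is complete.
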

\begin{proof}
This follows along the same lines as \cite[Sec.\,5]{syzrep}.
\end{proof}

We call the $e_{\lambda, 1}, e_{\lambda, 2}, \ldots, e_{\lambda, \dim V_\lambda}$ the \emph{scrollar invariants of $\lambda$ with respect to $L$}, or with respect to $\varphi$.

We recall a little lemma which is useful in proving that a candidate reduced basis is in fact reduced.

\begin{lemma}\label{lem:candidate.basis.is.reduced}
Let $V$ be a $k(t)$-vector subspace of $L$ with scrollar invariants $r_1\leq \ldots \leq r_n$. Suppose that the elements $v_1, \ldots, v_n\in \OO_V$ form a $k(t)$ basis for $V$, that there are integers $r_1'\leq \ldots \leq r_n'$ such that
\[
r_1 + \ldots + r_n = r_1' + \ldots + r_n'
\]
and that $t^{-r_i'}v_i$ is in $\OO_{V, \infty}$. Then $r_i=r_i'$ for all $i$ and $v_1, \ldots, v_n$ is a reduced basis for $V$.
\end{lemma}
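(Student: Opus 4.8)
The plan is to compare the given candidate basis against a genuine reduced basis via change-of-basis matrices at both $t=0$ and $t=\infty$, and to extract the conclusion from a single determinant computation. To begin, invoke the definition of scrollar invariants (following \cite[Cor.\,4.3]{hessRR}) to fix an honest reduced basis: a $k[t]$-basis $w_1, \ldots, w_n$ of $\OO_V$ such that $t^{-r_1}w_1, \ldots, t^{-r_n}w_n$ is a $k[t^{-1}]$-basis of $\OO_{V,\infty}$, where $r_1 \leq \ldots \leq r_n$ are the scrollar invariants of $V$. Since $v_1, \ldots, v_n \in \OO_V$ and the $w_i$ form a $k[t]$-basis of $\OO_V$, I can write $v_j = \sum_i a_{ij} w_i$ with $a_{ij} \in k[t]$; collecting these into the matrix $A = (a_{ij})$, the fact that both families are $k(t)$-bases of $V$ forces $\det A \in k[t] \setminus \{0\}$.

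Next I would carry the same bookkeeping to infinity. Setting $\tilde{w}_i = t^{-r_i} w_i$ and $\tilde{v}_j = t^{-r_j'} v_j$, one computes $\tilde{v}_j = \sum_i \left(t^{r_i - r_j'} a_{ij}\right) \tilde{w}_i$, so that in matrix form the transition is $B = D_1 A D_2$ with $D_1 = \diag(t^{r_1}, \ldots, t^{r_n})$ and $D_2 = \diag(t^{-r_1'}, \ldots, t^{-r_n'})$. The hypothesis $t^{-r_j'} v_j \in \OO_{V,\infty}$ says precisely that every entry $b_{ij}$ lies in $k[t^{-1}]$, since the $\tilde{w}_i$ form a $k[t^{-1}]$-basis of $\OO_{V,\infty}$. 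The linchpin is now the determinant: $\det B = t^{\sum_i r_i - \sum_j r_j'}\det A = \det A$, where the exponent vanishes exactly because of the hypothesis $\sum r_i = \sum r_i'$. Thus $\det A = \det B \in k[t] \cap k[t^{-1}] = k$, and being nonzero it is a unit; hence $A \in \GL_n(k[t])$ and, by the same computation, $B \in \GL_n(k[t^{-1}])$.

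Finally, $A \in \GL_n(k[t])$ means $v_1, \ldots, v_n$ is itself a $k[t]$-basis of $\OO_V$, while $B \in \GL_n(k[t^{-1}])$ means $t^{-r_1'}v_1, \ldots, t^{-r_n'}v_n$ is a $k[t^{-1}]$-basis of $\OO_{V,\infty}$. This exhibits $v_1, \ldots, v_n$ as a reduced basis with associated integers $r_1' \leq \ldots \leq r_n'$; by the uniqueness of scrollar invariants in the cited definition, this forces $r_i = r_i'$ for all $i$, completing the proof. The argument is short and the only real subtlety is the determinant step: it is the equal-sum hypothesis that makes the spurious power of $t$ cancel, and the observation $k[t] \cap k[t^{-1}] = k$ that upgrades an a priori merely invertible-over-$k(t)$ change of basis into an automorphism of both integral lattices simultaneously. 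I would take care to state explicitly that the candidate invariants are already sorted, so that uniqueness applies without reordering.
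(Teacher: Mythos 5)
Your proof is correct and complete: the change-of-basis matrices $A$ at $t=0$ and $B=D_1AD_2$ at $t=\infty$, the cancellation $\det B = t^{\sum r_i - \sum r_i'}\det A = \det A$ forced by the equal-sum hypothesis, and the conclusion $\det A \in k[t]\cap k[t^{-1}] = k$ giving $A\in\GL_n(k[t])$ and $B\in\GL_n(k[t^{-1}])$, followed by the uniqueness statement of \cite[Cor.\,4.3]{hessRR}, is exactly the standard argument. The paper itself gives no proof here, deferring to \cite[Lem.\,24]{syzrep}, and your write-up supplies precisely that argument, so there is nothing to add.
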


\begin{proof}
See \cite[Lem.\,24]{syzrep}.
\end{proof}

\subsection{Resolvent curves}\label{sec:resolvents} We next show that a reduced basis for $L$ itself can be obtained by taking reduced bases for each $W_\lambda$. In fact, we prove a more general statement about the scrollar invariants of \emph{resolvent curves}. For $H$ a subgroup of $S_d$ let $\OO_L^H$ be the subring of $\OO_L$ consisting of the elements fixed by $H$, and define $\OO_{L, \infty}^H$ similarly. Let $v_1, \ldots, v_n$ be a reduced basis of $\OO_L^H$ with scrollar invariants $r_1, \ldots, r_n$. Then we define the \emph{resolvent curve $\res_H C$} by gluing together $\Spec \OO_L^H$ and $\Spec \OO_{L, \infty}^H$ via the bases $v_1, \ldots, v_n$ and $t^{-r_1}v_1, \ldots, t^{-r_n}v_n$. This is a curve, i.e.\ it is projective and reduced, and comes naturally equipped with a degree $[S_d:H]$ morphism 
\[
\res_H \varphi: \res_H C\to \PP^1
\]
coming from the inclusion $k[t]\subset \OO_L^H$. It may happen that $\res_H C$ is singular or reducible, and we will later classify precisely when this happens.

When we talk about the scrollar invariants of $\res_H C$, we will mean the scrollar invariants $r_1, \ldots, r_n$ as above, i.e.\ those of $\OO_L^H$. We denote by $\Ind_H^{S_d} \mathbf{1}$ the representation of $S_d$ induced by the trivial representation of $H$, in other words this is the coset representation $k(t)[S_d/H]$. One can now prove Theorem~\ref{thm:scrollar.invs.resolvent}, which expresses the scrollar invariants of $\res_H C$ in terms of the scrollar invariants of various partitions $\lambda$.

\begin{proof}[Proof of Theorem~\ref{thm:scrollar.invs.resolvent}]
The proof is identical to \cite[Thm.\,3]{syzrep}
\end{proof}

\subsection{The $S_e$-closure of $k[t^{1/e}]$}\label{sec:Se.closure.k[t1/e]}

Many of the proofs below will proceed by working geometrically and locally. In other words, we base change to $k^\sep[[t]]$ instead of $k[t]$, where $k^\sep$ is some separable closure of $k$. So let us first prove that the scrollar invariants behave well with respect to this passage.

\begin{lemma}
Let $\lambda$ be a partition of $d$. Then the scrollar invariants of $\lambda$ in $K\otimes_k k^\sep$ are the same as those of $\lambda$ in $K$.
\end{lemma}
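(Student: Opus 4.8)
The plan is to show that the scrollar invariants of $\lambda$ are unchanged under base change from $k$ to $k^\sep$. The essential point is that these invariants are defined via the pair of lattices $\OO_\lambda = W_\lambda \cap \OO_L$ and $\OO_{\lambda,\infty} = W_\lambda \cap \OO_{L,\infty}$ sitting inside the $k(t)$-vector space $W_\lambda$, together with the Hess--Stöhr normal form from \cite[Cor.\,4.3]{hessRR}. Thus I would reduce everything to checking that each of the ingredients $-$ the $S_d$-closure $\OO_L$, the ring of integers $\OO_K$, the isotypic decomposition $L \cong \bigoplus_\lambda W_\lambda$, and the resulting reduced basis $-$ commutes with the flat extension $k \to k^\sep$.

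First I would use Theorem~\ref{thm:Sdclosure.basechange}: base change of the $S_d$-closure gives $\OO_L \otimes_k k^\sep \cong G(\OO_K \otimes_k k^\sep / k^\sep[t])$, and likewise at infinity. Writing $K' = K \otimes_k k^\sep$, the ring of integers $\OO_{K'}$ is the integral closure of $k^\sep[t]$ in $K'$; since $k^\sep/k$ is separable (hence $\OO_K \otimes_k k^\sep$ is reduced and regular in codimension one where $\OO_K$ is), one has $\OO_{K'} = \OO_K \otimes_k k^\sep$, so that $G(\OO_{K'}/k^\sep[t]) \cong \OO_L \otimes_k k^\sep$. The $S_d$-action is compatible with this identification because it only permutes tensor factors, and base change is $S_d$-equivariant. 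Consequently the isotypic component $W_\lambda'$ for the extension over $k^\sep$ satisfies $W_\lambda' \cong W_\lambda \otimes_{k(t)} k^\sep(t)$, and the integral structures match: $\OO_\lambda \otimes_k k^\sep = W_\lambda' \cap (\OO_L \otimes_k k^\sep)$, with the analogous statement at infinity. Here one uses that intersecting lattices commutes with flat base change, and that the idempotent decomposition of $k(t)[S_d]$ projecting onto each isotypic piece is already defined over $\QQ \subseteq k$ (so it is unaffected by extending scalars and remains the same after base change).

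Finally, given a reduced basis $v_1, \ldots, v_n$ of $\OO_\lambda$ with scrollar invariants $r_1 \leq \ldots \leq r_n$, the elements $v_1 \otimes 1, \ldots, v_n \otimes 1$ form a $k^\sep[t]$-basis of $\OO_\lambda \otimes_k k^\sep$ and $t^{-r_1} v_1 \otimes 1, \ldots, t^{-r_n} v_n \otimes 1$ form a $k^\sep[t^{-1}]$-basis of $\OO_{\lambda,\infty} \otimes_k k^\sep$, again by flatness. Since the sum $\sum_i r_i$ is preserved and each $t^{-r_i} v_i \otimes 1$ lies in the integral closure at infinity over $k^\sep$, Lemma~\ref{lem:candidate.basis.is.reduced} shows that the $r_i$ are exactly the scrollar invariants of $\lambda$ over $k^\sep$ and that $v_1 \otimes 1, \ldots, v_n \otimes 1$ is already reduced. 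This gives the equality of the two multi-sets of invariants.

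The step I expect to be the main obstacle is the compatibility of the ring of integers with base change, namely $\OO_{K'} = \OO_K \otimes_k k^\sep$, and the parallel statement that forming the $S_d$-closure of the integral ring commutes with $\otimes_k k^\sep$. The first requires that $\OO_K$ stays integrally closed after a separable (though possibly infinite) extension of the constant field, which is where separability of $k^\sep/k$ is genuinely used; one must handle this carefully rather than invoke smoothness naively, since $C$ is only assumed nice over $k$. The second is cleaner, following directly from Theorem~\ref{thm:Sdclosure.basechange}, but it must be applied to the integral models $\OO_K/k[t]$ and $\OO_{K,\infty}/k[t^{-1}]$ rather than to the function fields, so one should verify that the theorem applies to these rank-$d$ ring extensions (which it does, since $\OO_K$ is free of rank $d$ over the PID $k[t]$).
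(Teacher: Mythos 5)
Your proposal is correct and takes essentially the same approach as the paper, which likewise establishes $\OO_K\otimes_k k^\sep[t] \cong \OO_{K\otimes_k k^\sep[t]}$ (citing that $k[t]\to k^\sep[t]$ is a direct limit of \'etale morphisms, via \cite[Lem.\,10.147.5]{stacks-project}), then applies Theorem~\ref{thm:Sdclosure.basechange} to the integral model exactly as you do, and finishes by invoking Theorem~\ref{thm:scrollar.invs.resolvent} with $H=1$ in place of your explicit isotypic-component and reduced-basis argument. The one small blemish is your appeal to Lemma~\ref{lem:candidate.basis.is.reduced}, which presupposes that $\sum_i r_i$ is unchanged under base change (asserted but not justified); this detour is unnecessary, since the base-changed basis $v_i\otimes 1$ together with the exponents $r_i$ already realizes the normal form whose uniqueness is part of \cite[Cor.\,4.3]{hessRR}, so the invariants agree directly.
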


\begin{proof}
The morphism $k[t]\to k^\sep[t]$ is a direct limit of étale morphisms, so by~\cite[Lem.\,10.147.5]{stacks-project} the natural map $\OO_K\otimes k^\sep[t] \to \OO_{K\otimes k^\sep[t]}$ is an isomorphism of $k^\sep[t]$-algebras. By Theorem~\ref{thm:Sdclosure.basechange} we therefore also have that
\[
\OO_L\otimes k^\sep[t] \cong G(\OO_{K\otimes k^\sep[t]}/k^\sep[t])
\]
is an isomorphism of $k^\sep[t]$-algebras. Now Theorem~\ref{thm:scrollar.invs.resolvent} for the maximal resolvent with respect to $H=1$ gives the desired result.
\end{proof}

So for the rest of this subsection, we assume that $k$ is separably closed. We denote by $\hat{K}, \hat{L}, \hat{\OO}_K, \hat{\OO}_L, ...$ the corresponding objects completed at $0$, i.e.\ tensored with $k((t))$ or $k[[t]]$. Since $S_d$-closure commutes with base change, we have that 
\[
\hat{\OO}_L = G(\hat{\OO}_K/k[[t]]).
\]
Assume that $0$ has ramification type $(e_1, \ldots, e_s)$ in $K$. Then since we are working in characteristic zero or larger than $d$, $\hat{K}$ is isomorphic to $k((t^{1/e_1}))\oplus \ldots \oplus k((t^{1/e_s}))$. Hence we also have that $\hat{\OO}_K = k[[t^{1/e_1}]]\oplus \ldots \oplus k[[t^{1/e_s}]]$ as a $k[[t]]$-extension, and so as a $k[[t]]$-algebra
\[
\hat{\OO}_L = \bigotimes_i G(k[[t^{1/e_i}]] / k[[t]]),
\]
by Theorems~\ref{thm:Sdclosure.monogenic} and~\ref{thm:Sdclosure.product}. Now note that by base change, $G(k[[t^{1/e_i}]] / k[[t]])$ is also equal to the completion of $G(k[t^{1/e_i}]/k[t])$. For the rest of this section, we fix an integer $e$ and study the $S_e$-closure $R_e = G(k[t^{1/e}] / k[t])$. Denote by $\alpha_1, \ldots, \alpha_e$ the elements $t_1^{1/e}, \ldots, t_e^{1/e}$ in $R_e$. So $\alpha_i^e = t$ and $\alpha_1\alpha_2\cdots \alpha_e = \pm t$. By Theorem~\ref{thm:Sdclosure.monogenic} $R_e$ is a ring of rank $e!$ with a basis given by the elements
\[
\alpha_1^{\ell_1}\alpha_2^{\ell_2}\cdots \alpha_e^{\ell_e}
\]
for $0\leq \ell_i < i$. Denote by $s_1, \ldots, s_e$ the elementary symmetric polynomials in the $\alpha_i$. Since the minimal polynomial of $\alpha_i$ is $X^e - t$, we have that $s_1 = \ldots = s_{e-1} = 0$ while $s_e = \pm t$. For $n = (n_1, \ldots, n_e)$ a sequence of positive integers with $n_1\geq ... \geq n_e$, we define $s_n = \prod_i s_{n_i}$. Note that $s_n = 0$ as soon as $e\nmid n_i$ for some $i$. Similarly, we denote by $m_n$ the \emph{monomial symmetric polynomial} in the $\alpha_i$ associated to $n$. In more detail this is
\[
m_n = \sum \alpha_{i_1}^{n_1} \alpha_{i_2}^{n_2} \ldots \alpha_{i_e}^{n_e},
\]
where the sum is over all conjugates of this element. For example, $m_{(2)} = \sum_i \alpha_i^2$.

\begin{lemma}\label{lem:trace.product.alphai}
Let $(n_1, \ldots, n_e)$ be a sequence of positive integers with $n_1\geq ... \geq n_e$. Then
\[
\Tr(\alpha_1^{n_1}\cdots \alpha_e^{n_e}) = \begin{cases} 0 & \text{ if } e\nmid \sum n_i, \\
c t^{\sum n_i/e} & \text{ if } e\mid \sum n_i, \end{cases}
\]
where $c\in \ZZ$.
\end{lemma}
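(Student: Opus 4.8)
The plan is to reduce the computation to the fundamental theorem of symmetric polynomials, exploiting the fact that the elementary symmetric polynomials $s_1,\ldots,s_e$ in the $\alpha_i$ take the prescribed values $0,\ldots,0,\pm t$ in $R_e$. First I would unwind the definition of the trace. Since $\Tr(\beta)=\sum_{\sigma\in S_e}\sigma(\beta)$, we have
\[
\Tr(\alpha_1^{n_1}\cdots\alpha_e^{n_e}) = \sum_{\sigma\in S_e}\alpha_{\sigma(1)}^{n_1}\cdots\alpha_{\sigma(e)}^{n_e}.
\]
Collecting equal monomials, each distinct rearrangement of the exponent vector $(n_1,\ldots,n_e)$ occurs the same number of times, namely $N=\prod_v(\#\{i:n_i=v\})!$. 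Hence $\Tr(\alpha_1^{n_1}\cdots\alpha_e^{n_e})=N\,m_n$, where $m_n$ is the monomial symmetric polynomial in the $\alpha_i$ and $N\in\ZZ_{>0}$. This reduces the statement to understanding the single element $m_n\in R_e$.

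Next I would apply the fundamental theorem of symmetric polynomials over $\ZZ$. Since $m_n$ is a symmetric polynomial in $\alpha_1,\ldots,\alpha_e$ with integer coefficients, there is a unique $P\in\ZZ[y_1,\ldots,y_e]$ with $m_n=P(s_1,\ldots,s_e)$, an identity in $\ZZ[\alpha_1,\ldots,\alpha_e]$. As $m_n$ is homogeneous of degree $\sum_i n_i$ for the usual grading $\deg\alpha_i=1$, and the $s_j$ are algebraically independent with $\deg s_j=j$, the polynomial $P$ is weighted-homogeneous of degree $\sum_i n_i$ for the weights $\deg y_j=j$. This identity remains valid after mapping the $\alpha_i$ to their images in $R_e$; there, as recalled above, $s_1=\cdots=s_{e-1}=0$ and $s_e=\pm t$.

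It then remains to substitute these values into $P$. Setting $y_1=\cdots=y_{e-1}=0$ annihilates every monomial of $P$ except the pure powers of $y_e$, and weighted-homogeneity forces such a power $y_e^a$ to satisfy $ea=\sum_i n_i$. Consequently, if $e\nmid\sum_i n_i$ then no term survives, so $m_n=0$ in $R_e$ and the trace vanishes; whereas if $e\mid\sum_i n_i$ then $P(0,\ldots,0,y_e)=c_0\,y_e^{\sum_i n_i/e}$ for a single coefficient $c_0\in\ZZ$, giving $m_n=c_0(\pm t)^{\sum_i n_i/e}$ and therefore $\Tr(\alpha_1^{n_1}\cdots\alpha_e^{n_e})=c\,t^{\sum_i n_i/e}$ with $c=\pm Nc_0\in\ZZ$. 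Note that integrality of $c$ is automatic here, following from $N\in\ZZ$ together with the fundamental theorem over $\ZZ$; no separate reduction to a model over $\ZZ$ is required.

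The only genuinely delicate point is the descent of the symmetric-function identity to $R_e$: one must be sure that the formal elementary symmetric polynomials $s_j$ in the $\alpha_i$ really specialize to $0,\ldots,0,\pm t$, which is precisely what the generators of the defining ideal $I(k[t^{1/e}],k[t])$ enforce for the element $a=t^{1/e}$ (whose characteristic polynomial is $X^e-t$). I therefore expect this step — tying the abstract $S_e$-closure relations to the concrete values of the elementary symmetric polynomials — to carry the real content, with the remainder being a routine manipulation of symmetric polynomials.
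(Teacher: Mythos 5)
Your proof is correct and takes essentially the same route as the paper's: both reduce the trace to an integer multiple of the monomial symmetric polynomial $m_n$, expand $m_n$ in the elementary symmetric polynomials, and use homogeneity together with the relations $s_1=\cdots=s_{e-1}=0$, $s_e=\pm t$ (forced by the minimal polynomial $X^e-t$ via the defining ideal of the $S_e$-closure) to conclude. The only differences are cosmetic: you make the integer multiplier $N$ explicit and phrase the symmetric-function step via weighted homogeneity of the polynomial $P$, where the paper instead expands $m_n$ in the degree-graded basis of products $s_{n'}$.
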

\begin{proof}
We have that $\Tr(\alpha_1^{n_1}\cdots \alpha_e^{n_e}) = c m_n$ for some $c\in \ZZ$. It is well known that the elementary symmetric functions form a basis for the space of symmetric functions, so we may write
\[
m_n = \sum_{n'} c_{n'} s_{n'}
\]
where the sum is over decreasing sequences $n' = (n_1', \ldots, n_e')$ with $\sum n_i' = \sum n_i$. If $\sum n_i$ is not divisible by $e$ then for every $n'$ in the above sum, there is some $i$ for which $n_i'$ is not divisible by $e$ and so $m_n = 0$. If $e$ divides $\sum n_i$, then for every $n'$ appearing in the sum either $s_{n'} = 0$, or $s_{n'}$ is equal to $c_{n'} t^{\sum n_i/e}$ for some $c_{n'}\in \ZZ$. This proves the lemma.
\end{proof}

By base change, we have that $R_e\otimes k(t) = G(k(t^{1/e}) / k(t))$, which is isomorphic as an $S_e$-representation to the regular representation $k(t)[S_e]$. For $\lambda$ a partition of $e$, let $R_{e, \lambda}$ be the $\lambda$-isotypic component of $R_e$, i.e.\ it consists of all elements $x\in R_e$ which generate the representation $V_\lambda$ in $R_e\otimes k(t)$ (together with $0$). We will look for a convenient basis of $R_{e,\lambda}$, which will be based on the higher Specht polynomials from Section~\ref{sec:higher.specht}. Let $\Rco = k(t)[x_1, \ldots, x_e]/(\tilde{s}_1, \ldots, \tilde{s}_e)$ be the ring of coinvariants with its natural $S_e$-action permuting the variables, where the $\tilde{s}_i$ are the elementary symmetric polynomials in the $x_i$. For $T, S\in \ST(\lambda)$ denote by $F_T^S\in \Rco$ the higher Specht polynomial corresponding to the pair of tableaux $(T,S)$. We denote by $\tilde{F}_T^S\in k(t)[x_1, \ldots, x_e]$ the natural lift of $F_T^S$ to $k(t)[x_1, \ldots, x_n]$, which is homogeneous of degree $\ch(S)$. Let $G_T^S$ be the element of $R_e$ obtained from $F_T^S$ by replacing each $x_i$ by $\alpha_i$. While the $G_T^S$ are not quite the basis we require, we still have the following.

\begin{lemma}
With notation as above, we have the following.
\begin{enumerate}
\item For every $\lambda\vdash e$ and $S\in \ST(\lambda)$ the set
\[
\{G_T^S\mid T\in\ST(\lambda)\}
\]
generates a representation in $R_e\otimes k(t)$ which contains a unique irreducible component isomorphic to $V_\lambda$.
\item The set 
\[
\{G_T^S \mid T,S\in\ST(\lambda), \lambda\vdash e\}
\] 
is a $k(t)$-basis of $R_e\otimes k(t)$.
\end{enumerate}
\end{lemma}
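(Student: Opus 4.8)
The plan is to treat both statements simultaneously by realizing $R_e\otimes k(t)$ as one member of a graded family over $B=k(t)[u]$ that also contains the coinvariant ring $\Rco$, and then transport the higher Specht basis of $\Rco$ across this family. Grade $B$ by $\deg u = e$ and $B[x_1,\dots,x_e]$ by $\deg x_i=1$, and set
\[
\widetilde{R}=B[x_1,\dots,x_e]/(\tilde s_1,\dots,\tilde s_{e-1},\,\tilde s_e-u).
\]
Since the characteristic polynomial of $x$ in $B[x]/(x^e-u)$ is $X^e-u$, this is exactly the $S_e$-closure $G(B[x]/(x^e-u)/B)$; by Theorem~\ref{thm:Sdclosure.monogenic} it is free over $B$ of rank $e!$ with monomial basis $\{\prod_i x_i^{\ell_i}:0\le\ell_i<i\}$, and since all its defining relations are homogeneous, $\widetilde{R}$ is a graded $B$-algebra. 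Base change (Theorem~\ref{thm:Sdclosure.basechange}) identifies $\widetilde{R}/u\widetilde{R}\cong\Rco$ and $\widetilde{R}/(u\mp t)\widetilde{R}\cong R_e\otimes k(t)$. Writing $\Phi_T^S\in\widetilde{R}$ for the image of $\tilde F_T^S$, homogeneous of degree $\ch(S)$, these two specializations send $\Phi_T^S$ to $F_T^S$ and to $G_T^S$ respectively.

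For part (2) I would compare the $e!$ elements $\{\Phi_T^S\}$ with the monomial basis. Let $M$ be the $e!\times e!$ matrix over $B$ expressing the $\Phi_T^S$ in the basis $\{\prod_i x_i^{\ell_i}\}$. By homogeneity its $((S,T),\ell)$ entry is homogeneous of degree $\ch(S)-\sum_i\ell_i$, so every term of $\det M$ has the single degree $\sum_{S,T}\ch(S)-\sum_\ell\sum_i\ell_i$ (the bijection defining a term redistributes the columns), and hence $\det M$ is homogeneous. Setting $u=0$ turns $\{\Phi_T^S\}$ into $\{F_T^S\}$, which is a basis of $\Rco$ by Theorem~\ref{thm:higher.specht.poly}(2), so $\det M$ does not vanish at $u=0$. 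A homogeneous element of $k(t)[u]$ that is nonzero at $u=0$ must be a nonzero constant, whence $\det M\in k(t)^\times$; in particular it stays nonzero after specializing $u\mapsto\pm t$. Thus $\{G_T^S\}$ is a $k(t)$-basis of $R_e\otimes k(t)$.

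For part (1) I would use that, for fixed $S$, the span $N_S=\Span\{\tilde F_T^S:T\in\ST(\lambda)\}\subset k(t)[x]$ is an $S_e$-stable irreducible subspace isomorphic to $V_\lambda$, concentrated in degree $\ch(S)$; this is the polynomial-ring form of the higher Specht construction of~\cite{higher_specht_2}, following from the identity $\rho\,\tilde F_T^S=\tilde F_{\rho T}^S$ together with the straightening of the $\tilde F_{\rho T}^S$ into standard ones. The substitution $q\colon k(t)[x]\to R_e\otimes k(t)$, $x_i\mapsto\alpha_i$, is $S_e$-equivariant, so $q(N_S)=\Span\{G_T^S:T\}$ is a homomorphic image of the irreducible $V_\lambda$ and hence is $0$ or $\cong V_\lambda$. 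Part (2) shows the $G_T^S$ are nonzero and independent, so $q(N_S)\cong V_\lambda$; being $S_e$-stable it equals the representation generated by the $G_T^S$, giving (1). As a semisimple safety net I would observe that the leading-term ideal of $\ker q$ is $(\tilde s_1,\dots,\tilde s_e)$, so the degree filtration yields $\mathrm{gr}(R_e\otimes k(t))\cong\Rco$ as $S_e$-modules, and since the characteristic is $0$ or exceeds $d\ge e$, Maschke gives $M\cong\mathrm{gr}\,M\subset\Rco$ for any submodule $M$; the degree-$\ch(S)$ part of $\mathrm{gr}\,M_S$ is precisely the higher Specht copy of $V_\lambda$.

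The main obstacle is the control of the lower-order terms, i.e. the claim that $N_S$ is $S_e$-stable inside the full polynomial ring. Over $\Rco$ this is immediate from Theorem~\ref{thm:higher.specht.poly}, but over $k(t)[x]$ the relation $s_e=\pm t$ produces corrections of degree $\ch(S)-e$ after substitution, and one must verify these fall back into $\Span\{G_T^S\}$. I expect to settle this either via the explicit Garnir/straightening relations for higher Specht polynomials, which already hold in $k(t)[x]$, or, absent a clean reference, by the associated-graded argument above: $M_S\cong\mathrm{gr}\,M_S$ has top piece equal to the single higher Specht copy $\cong V_\lambda$, and a multiplicity count against the basis of part (2) forces the $V_\lambda$-multiplicity of $M_S$ to be exactly one.
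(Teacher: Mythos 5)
Your treatment of part (2) is correct and genuinely different from the paper's. The paper proves (1) first and then deduces (2) by arguing that the copies of $V_\lambda$ attached to different $S$ meet only in $0$, and by comparing with the fact that both $R_e\otimes k(t)$ and $\Rco$ are isomorphic to the regular representation. You instead obtain (2) directly by a flat degeneration: your graded family $\widetilde{R}$ over $k(t)[u]$ interpolates between $\Rco$ (at $u=0$) and $R_e\otimes k(t)$ (at $u=\pm t$), and the transition determinant from the monomial basis of Theorem~\ref{thm:Sdclosure.monogenic} to the $\Phi_T^S$ is homogeneous, nonzero at $u=0$ by Theorem~\ref{thm:higher.specht.poly}(2), hence a nonzero constant. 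This buys you (2) with no representation-theoretic input beyond the special fibre, and in particular without first knowing (1), so the logical order is reversed relative to the paper. The one point you should make explicit is the identification of $\widetilde{R}$ (and of $R_e\otimes k(t)$) with the universal splitting algebra $k(t)[x_1,\ldots,x_e]/(\tilde s_1,\ldots,\tilde s_{e-1},\tilde s_e\mp t)$: the splitting algebra surjects onto the $S_e$-closure, both are free of rank $e!$ with the monomial basis mapping to the monomial basis, so the surjection is an isomorphism; the paper uses the same identification implicitly when it writes $s_1=\ldots=s_{e-1}=0$, $s_e=\pm t$.

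Part (1) is where the genuine gap sits, and you located it yourself: Theorem~\ref{thm:higher.specht.poly} is a statement in $\Rco$, and acting by $\sigma$ on $\tilde F_T^S$ in the polynomial ring produces correction terms lying in the ideal $(\tilde s_1,\ldots,\tilde s_e)$, so the $S_e$-stability of $\Span\{\tilde F_T^S\mid T\in\ST(\lambda)\}$ is not available off the shelf. The paper's proof is built exactly around this obstacle: it keeps the correction term, $\sigma G_T^S=\sum_{T'}a_{T'}G_{T'}^S\pm t\,H_e(\alpha_1,\ldots,\alpha_e)$ with $H_e$ of degree $\ch(S)-e$, and passes to the quotient by everything of degree $<\ch(S)$ --- that is, the paper's argument \emph{is} your fallback argument. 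But note that the fallback, as you state it, does not deliver the uniqueness assertion of (1): writing $M_S$ for the $S_e$-subrepresentation generated by $\{G_T^S\mid T\in\ST(\lambda)\}$, the graded module $\mathrm{gr}\,M_S\subseteq\Rco$ could a priori contain further copies of $V_\lambda$ in degrees $\ch(S')<\ch(S)$ coming from other $S'\in\ST(\lambda)$, and your multiplicity count against part (2) would require the modules $M_S$ themselves --- not merely the spans of their generators --- to form a direct sum, which is precisely what is in question. (The paper's own justification of the word "unique" is comparably terse, but the gap is real.)

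The good news is that your primary route can be completed by one observation you did not make: $\varepsilon_{T_0}=\bigl(\sum_{\tau\in C(T_0)}\sgn(\tau)\tau\bigr)\bigl(\sum_{\rho\in R(T_0)}\rho\bigr)$ is a Young symmetrizer, so $k(t)[S_e]\varepsilon_{T_0}$ is a minimal left ideal (characteristic $0$ or $>e$). Fix a standard $T_0$ and restrict the equivariant map $k(t)[S_e]\to k(t)[x_1,\ldots,x_e]$, $z\mapsto z\cdot x_{T_0}^{\chw S}$, to this left ideal: its image is exactly the $S_e$-module generated by $\tilde F_{T_0}^S$, hence is zero or irreducible; it is nonzero because its image in $\Rco$ is the span of the $F_T^S$, which is $V_\lambda$ by Theorem~\ref{thm:higher.specht.poly}(1). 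Since the standard $\tilde F_T^S$ are linearly independent (already so in $\Rco$) and every $\tilde F_{\sigma T_0}^S$ lies in this irreducible module, the module equals $\Span\{\tilde F_T^S\mid T\in\ST(\lambda)\}$, which settles your stability claim. Applying your substitution map $q$ then exhibits $M_S$ as a nonzero quotient of an irreducible module, so $M_S\cong V_\lambda$; this proves (1) with uniqueness --- indeed with irreducibility of $M_S$, which is stronger than what the paper's filtration argument yields.
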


\begin{proof}
For (1), let $\sigma\in S_d$. Then by Theorem~\ref{thm:higher.specht.poly} we have
\[
\sigma F_T^S = \sum_{T'\in\ST(\lambda)} a_{T'} F_{T'}^S,
\]
for some $a_{T'}$ in $k(t)$. Hence we have that in $k(t)[x_1, \ldots, x_e]$
\[
\sigma \tilde{F}_T^S = \sum_{T'\in\ST(\lambda)} a_{T'} \tilde{F}_{T'}^S + \sum_{i=1}^e s_i H_i(x_1, \ldots, x_e),
\]
where the $H_i\in k(t)[x_1, \ldots, x_e]$ are homogeneous of degree $\ch(S) - i = \deg(F_T^S) - i$. In $R_e\otimes k(t)$ this gives that
\[
\sigma G_T^S = \sum_{T'\in\ST(\lambda)} a_{T'} G_{T'}^S \pm t H_e(\alpha_1, \ldots, \alpha_e).
\]
In the quotient (as a vector space) of $R_e\otimes k(t)$ by all polynomials in the $\alpha_i$ of degree less than $ \ch(S)$ we see that $G_T^S$ generate a representation isomorphic to $V_\lambda$, by the theory of higher Specht polynomials. So acting with $S_e$ on $\{G_T^S\mid T\in\ST(\lambda)\}$ gives a representation in $R_e\otimes k(t)$ which contains a unique irreducible component isomorphic to $V_\lambda$.

For (2), we note that the irreducible components $V_\lambda$ obtained in the previous paragraph are disjoint for different $S$. In more detail, if $S, S'$ are different elements in $\ST(\lambda)$, then the two irreducible copies of $V_\lambda$ in $R_e\otimes k(t)$ obtained by acting with $S_e$ on $\{G_T^S \mid T\in \ST(\lambda)\}$ and $\{G_T^{S'} \mid T\in \ST(\lambda)\}$ have only $0$ in common. Since both $R_e\otimes k(t)$ and $\Rco$ are isomorphic as $S_e$-representations to the regular representation, this implies that the map $\Rco\to R_e\otimes k(t)$ sending $F_T^S$ to $G_T^S$ is an isomorphism of $k(t)$-vector spaces.
\end{proof}

Consider the $k(t)$-linear map
\[
\psi: \Rco\to R_e\otimes k(t): F_T^S\mapsto G_T^S.
\]
By the previous lemma, this is an isomorphism of vector spaces. We modify $\psi$ to
\[
\psi_0: \Rco\to R_e\otimes k(t): \beta \mapsto \frac{1}{e!} \sum_{\sigma\in S_e} \sigma^{-1} \phi(\sigma\beta).
\]
Then by Schur's lemma $\psi_0$ is an $S_e$-equivariant map. Moreover, by part (i) of the previous lemma $\psi_0$ is an isomorphism of $S_e$-representations. In particular, for every $\lambda\vdash e$ and $S\in \ST(\lambda)$, 
\[
\operatorname{span}\{\psi_0(F_T^S) \mid T\in\ST(\lambda)\}
\]
is an irreducible subrepresentation of $R_e\otimes k(t)$ isomorphic to $V_\lambda$. Even more, these give the decomposition of $R_e\otimes k(t)$ into its irreducible subrepresentations.

Our next goal is to prove that the elements $\psi_0(F_T^S)$ actually form a $k[t]$-basis of $R_e$.

The discriminant of $R_e$ over $k[t]$ is defined as usual as the determinant of the $e!\times e!$ matrix $(\Tr (\omega_i \omega_j))_{i,j}$ where $\{\omega_i\}_i$ is a $k[t]$-basis of $R_e$. By a small adaptation of Theorem~\ref{thm:scrollar.invs.resolvent} to this setting, we may take a $k[t]$-basis of the form $\{\omega_{\lambda, i}\}_{\lambda\vdash e}$ such that $\omega_{\lambda,i}$ generates a copy of $V_\lambda$ in $R_e\otimes k(t)$. If $\lambda\neq \lambda'$ are two partitions of $e$, then for any $i,j$, $\Tr(\omega_{\lambda,i}\omega_{\lambda', j}) = 0$. Indeed, this follows from the fact that $V_\lambda\otimes V_{\lambda'}$ does not contain the trivial representation. Hence the discriminant $\disc(R_e/k[t])$ splits into a product of discriminants $\disc(R_{e,\lambda}/k[t])$, one for each partition $\lambda\vdash e$ and which is defined as the determinant of the matrix $(\Tr(\omega_{\lambda, i}\omega_{\lambda, j})_{i,j}$ where $\{\omega_{\lambda, i}\}_i$ is any $k[t]$-basis for $R_{e,\lambda}$. We will consider discriminants only up to a unit. To prove the volume formula, the following is the key lemma. Recall that $p(\lambda) = \frac{1}{2}(\dim V_\lambda - \chi_\lambda((12)))$.

\begin{lemma}\label{lem:basis.Se.closure.k[t1/e]}
Let $\lambda\vdash e$. 
\begin{enumerate}
\item The set $\{\psi_0(F_T^S) \mid T, S\in \ST(\lambda)\}$ is a $k[t]$-basis for $R_{e, \lambda}$.
\item We have that
\[
\disc (R_{e,\lambda} / k[t]) = t^{p(\lambda)\dim(V_\lambda) (e-1) }.
\]
\end{enumerate}
\end{lemma}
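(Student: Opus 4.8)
The plan is to prove both parts together, using the charge grading on $R_e$ and a discriminant computation to force the candidate basis to be a genuine basis.

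\medskip

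\textbf{Setup and strategy.} I would first observe that $R_e = G(k[t^{1/e}]/k[t])$ carries a natural grading (or filtration) by degree in the $\alpha_i$, where each $\alpha_i$ has degree $1$ and $t = \pm s_e$ has degree $e$. Under the isomorphism $\psi_0$, the element $\psi_0(F_T^S)$ is (up to lower-degree corrections coming from the relation $s_e = \pm t$) represented by $G_T^S$, which is homogeneous of degree $\ch(S)$ in the $\alpha_i$. The key numerical input is that for a standard tableau $S$, the charge $\ch(S) = \sum_{i=1}^d s(S,i)$ records exactly the $\alpha$-degree, and the largest subscript $m(S) = s(S,e)$ controls the $t$-adic valuation after reduction. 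The plan for part (1) is to show that the $e! = \sum_\lambda (\dim V_\lambda)^2$ elements $\{\psi_0(F_T^S)\}$, which we already know form a $k(t)$-basis of $R_e \otimes k(t)$ and lie in $R_e$, in fact generate $R_e$ over $k[t]$; equivalently, that the index $[R_e : \sum k[t]\,\psi_0(F_T^S)]$ is trivial. This I would detect via the discriminant: the candidate basis is a genuine $k[t]$-basis if and only if the discriminant of the lattice it spans equals $\disc(R_e/k[t])$ up to a unit, so parts (1) and (2) are really one statement.

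\medskip

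\textbf{The discriminant computation.} For part (2), I would compute $\disc(R_{e,\lambda}/k[t])$ directly from the candidate basis $\{\psi_0(F_T^S)\}$, using Lemma~\ref{lem:trace.product.alphai} to control the trace pairing. The trace $\Tr(\psi_0(F_T^S)\,\psi_0(F_{T'}^{S'}))$ vanishes unless the two factors pair to a symmetric combination whose total $\alpha$-degree $\ch(S) + \ch(S')$ is divisible by $e$, in which case it is a constant times $t^{(\ch(S)+\ch(S'))/e}$. Since $\psi_0$ is $S_e$-equivariant and Specht components for distinct $\lambda$ pair to zero (as $V_\lambda \otimes V_{\lambda'}$ contains no trivial summand, exactly as argued in the text for the block decomposition of $\disc(R_e/k[t])$), the Gram matrix is block-diagonal. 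Within the $\lambda$-block, the $t$-adic valuation of each entry is $(\ch(S)+\ch(S'))/e$, so the determinant's valuation is governed by summing the charges. The clean bookkeeping device is Corollary~\ref{cor:sum.maximal.reading.word}: $\sum_{S\in\ST(\lambda)} m(S) = p(\lambda)(e-1)$. I expect the diagonal (highest-valuation) contribution to dominate and to give $t$-adic valuation $\dim V_\lambda \cdot \sum_{S} m(S) = p(\lambda)\dim(V_\lambda)(e-1)$ — matching the target exponent — with the off-diagonal terms either vanishing or contributing to lower order in a way that does not affect the leading valuation of the determinant.

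\medskip

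\textbf{Closing the loop and the main obstacle.} Having computed the valuation of the candidate-basis discriminant as $t^{p(\lambda)\dim(V_\lambda)(e-1)}$, I would verify that summing over all $\lambda \vdash e$ reproduces $\disc(R_e/k[t])$ up to a unit, by independently computing the latter via the monogenic basis from Theorem~\ref{thm:Sdclosure.monogenic}. The generator–relation discriminant of $G(k[t^{1/e}]/k[t])$ can be read off from the basis $\{\alpha_1^{\ell_1}\cdots\alpha_e^{\ell_e} : 0 \le \ell_i < i\}$ and Lemma~\ref{lem:trace.product.alphai}; matching the two totals forces each block's candidate basis to be saturated, giving part (1) simultaneously. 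The main obstacle I anticipate is controlling the off-diagonal and lower-degree correction terms: because $\psi_0(F_T^S)$ differs from the homogeneous $G_T^S$ by contributions proportional to $t$ (from reducing $s_e = \pm t$), the Gram matrix is not literally diagonal, and one must argue that the symmetrization $\psi_0$ together with the triangularity in the charge grading keeps the determinant's $t$-adic valuation equal to the sum of the leading diagonal valuations. I would handle this by ordering the basis by increasing charge and showing the Gram matrix is, after clearing the common $t$-power in each row and column, congruent to an invertible constant matrix modulo $t$ — so that the valuation count reduces exactly to the charge sum, and no hidden cancellation or inflation occurs.
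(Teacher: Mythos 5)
Your overall strategy (prove (1) and (2) simultaneously by computing the discriminant of the candidate lattice and matching the sum over all $\lambda$ against $\disc(R_e/k[t])=t^{(e-1)e!/2}$) is sound, and it genuinely differs from the paper's route: the paper never computes the Gram matrix at $0$, but instead observes that $t^{-m(S)}\psi_0(F_T^S)$ lies in the closure at infinity $R_{e,\infty}$, deduces from this the degree bound $\deg \disc(R_{e,\lambda})\leq \dim V_\lambda\sum_S m(S)$, and forces equality by summing over $\lambda$. However, your computation at $0$ has a genuine gap in its central valuation count. By Lemma~\ref{lem:trace.product.alphai}, each Gram entry equals $c\, t^{(\ch(S)+\ch(S'))/e}$ with $c\in k$ constant (and is $0$ whenever $e\nmid \ch(S)+\ch(S')$); consequently \emph{every} nonzero term in the determinant expansion has the \emph{same} valuation, namely $\tfrac{2}{e}\dim V_\lambda \sum_{S\in\ST(\lambda)}\ch(S)$, and the candidate discriminant is exactly $t^{\frac{2}{e}\dim V_\lambda\sum_S \ch(S)}\det C$ for a constant matrix $C$. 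To land on the target exponent you therefore need the identity $\sum_{S\in\ST(\lambda)}\ch(S)=p(\lambda)\binom{e}{2}$, a statement about the \emph{full} charge statistic. This is not Corollary~\ref{cor:sum.maximal.reading.word}, which computes $\sum_S m(S)=\sum_S s(S,e)$, i.e.\ only the top subscript; you have conflated the two statistics (the paper's statistic $m(S)$ is the relevant one for its argument at infinity, not for yours at $0$). The identity you actually need is true and provable by the same telescoping: Lemma~\ref{lem:plambda.reading.words} gives $\sum_S s(S,i)=p(\lambda)(i-1)$ for each $i$, and summing over $i$ yields $p(\lambda)\binom{e}{2}$; but your proposal neither states nor proves it, and attributes the resulting number to the wrong result.

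Moreover, the mechanism you invoke is incorrect as described: the diagonal entries $\Tr(\psi_0(F_T^S)^2)$ vanish identically whenever $e\nmid 2\ch(S)$ (already for $\lambda=(2,1)$, $e=3$, and the tableau with $\ch(S)=1$), and all nonzero terms enter at exactly the same order, so there is no ``diagonal dominance,'' no lower-order off-diagonal contribution, and no useful triangularity after ordering by charge. For the same reason your plan to show the cleared matrix is ``invertible modulo $t$'' has no content: the cleared matrix is already constant, and the real point --- $\det C\neq 0$, which your closing step needs so that the valuation is exact rather than merely a lower bound on vanishing --- follows instead from non-degeneracy of the trace form on the \'etale algebra $R_e\otimes k(t)$ restricted to the isotypic block $W_\lambda$ (orthogonality of isotypic components, plus the fact, established in the lemma preceding this one in the paper, that the $\psi_0(F_T^S)$ form a $k(t)$-basis of $W_\lambda$). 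Finally, the obstacle you spend the most effort on is a non-issue: $\psi_0(F_T^S)$ is by construction a $\QQ$-linear combination of conjugates of $\alpha_1^{s(S,1)}\cdots\alpha_e^{s(S,e)}$, so it is purely of degree $\ch(S)$ in the $\alpha_i$, with no lower-degree corrections from the relation $s_e=\pm t$. With the charge-sum identity supplied and the invertibility argument replaced by non-degeneracy, your proof closes correctly and is a legitimate alternative to the paper's.
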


\begin{proof}
We will prove both results at the same time.

First, by~\cite[Sec.\,6]{bhargavasatriano} the discriminant $\disc(R_e/k[t])$ is equal to $t^{(e-1)e!/2}$. Note also that every element of the form $\psi_0(F_T^S)$ is certainly contained in $R_{e, \lambda}$. We now consider the $S_e$-closure above infinity, $R_{e,\infty} = G(k[t^{-1/e}]/k[t^{-1}])$. The rings $R_e$ and $R_{e,\infty}$ both embed in 
\[
R_e\otimes_{k[t]} k(t) = R_{e,\infty}\otimes_{k[t^{-1}]} k(t^{-1}) = G(k(t^{1/e})/k[t]).
\]
For $T, S\in \ST(\lambda)$, note that $\psi_0(F_T^S)$ is a $\ZZ$-linear combination of conjugates of the element $\alpha_1^{s(S,1)}\cdots \alpha_d^{s(S,d)}$. Therefore, we have that $t^{-m(S)}\psi_0(F_T^S)$ is an element of $R_{e,\infty}$, where $m(S)$ is the largest index in the charge word of $S$. Since the discriminant of $R_{e,\infty,\lambda}$ is an element of $k[t^{-1}]$, we see that the degree of the discriminant of $R_{e,\lambda}$ is bounded by 
\[
\dim V_\lambda \cdot \sum_{S\in\ST(\lambda)} m(S) = p(\lambda)(e-1)\dim(V_\lambda),
\]
by Corollary~\ref{cor:sum.maximal.reading.word}. Using that
\[
\sum_{\lambda\vdash e} p(\lambda)(e-1)\dim(V_\lambda) = (e-1)\frac{e!}{2},
\]
and the fact that $R_e$ has discriminant $t^{(e-1)e!/2}$ over $k[t]$, we obtain that
\[
\disc(R_{e,\lambda}/k[t]) = (t^{p(\lambda)(e-1)\dim V_\lambda}),
\]
and that the elements $\{\psi_0(F_T^S) \mid \lambda\vdash e, T,S\in\ST(\lambda)\}$ form a $k[t]$-basis for $R_e$.
\end{proof}

\subsection{The volume formula}

To prove Theorem~\ref{thm:schreyer.is.scrollar}, we will rely on a formula for the sum of the scrollar invariants of an irreducible representation. So for $\lambda$ a partition of $d$, we define the \emph{volume of $\lambda$ (with respect to $K$)} to be
\[
\vol_K (\lambda) = e_{\lambda, 1} + \ldots + e_{\lambda, \dim V_\lambda},
\]
i.e.\ it is the sum of the scrollar invariants of $\lambda$ with respect to $L$. The terminology comes from the analogy with the successive minima of a number field and Minkowski's second theorem~\cite[Sec.\,7]{hessRR}.

\begin{lemma}\label{lem:discr.formula.lambda.part}
Let $\lambda$ be a partition of $d$. Then
\[
\disc \OO_\lambda = \left( \disc \OO_K \right)^{p(\lambda) \dim V_\lambda},
\]
and similarly for $\OO_{\lambda, \infty}$.
\end{lemma}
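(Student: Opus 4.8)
The plan is to prove the discriminant formula by reducing to the local, geometric setting already developed in Section~\ref{sec:Se.closure.k[t1/e]}, where the analogous formula has essentially been established in Lemma~\ref{lem:basis.Se.closure.k[t1/e]}. The statement is a comparison of two discriminants (over $k[t]$), namely $\disc \OO_\lambda$ and $(\disc \OO_K)^{p(\lambda)\dim V_\lambda}$, and discriminants of orders in a global function field are determined by their local contributions at each closed point of $\PP^1$. So the first step is to reduce to the case where $k$ is separably closed, using the same argument as in the lemma comparing scrollar invariants over $k$ and $k^\sep$: the map $k[t]\to k^\sep[t]$ is a direct limit of \'etale morphisms, so discriminants are preserved under this base change and it suffices to verify the identity after tensoring with $k^\sep$.

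Once $k$ is separably closed, both sides of the claimed equality are determined by their valuations at each closed point of $\Spec k[t]$ (and similarly at infinity). The key point is that $\disc \OO_K$ itself is, up to units, the product over all branch points of $\varphi$ of local factors governed by the ramification type at that point, and by Theorem~\ref{thm:Sdclosure.basechange} the formation of $\OO_L = G(\OO_K/k[t])$ and hence of the isotypic piece $\OO_\lambda$ commutes with completion. Thus at a point where the ramification type is $(e_1, \ldots, e_s)$, the completed $S_d$-closure decomposes as the tensor product $\hat{\OO}_L = \bigotimes_i G(k[[t^{1/e_i}]]/k[[t]])$ via Theorems~\ref{thm:Sdclosure.monogenic} and~\ref{thm:Sdclosure.product}, and I would extract the local contribution to $\disc\OO_\lambda$ from this decomposition. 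The engine for this is precisely Lemma~\ref{lem:basis.Se.closure.k[t1/e]}(2), which computes $\disc(R_{e,\lambda}/k[t]) = t^{p(\lambda)\dim(V_\lambda)(e-1)}$ for a single cyclic block $R_e = G(k[t^{1/e}]/k[t])$; the exponent $p(\lambda)\dim V_\lambda$ there matches exactly the power appearing in the statement, which is the whole point.

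The heart of the argument is therefore bookkeeping: one must show that the local valuation of $\disc\OO_\lambda$ at a point of ramification type $(e_1,\ldots,e_s)$ equals $p(\lambda)\dim V_\lambda$ times the local valuation of $\disc\OO_K$ at that same point. The valuation of $\disc\OO_K$ at such a point is $\sum_i (e_i - 1)$ (the tame different exponent in characteristic zero or $>d$), so what I need is that the $\lambda$-isotypic local discriminant exponent is $p(\lambda)\dim V_\lambda \sum_i(e_i-1)$. For a single-block point this is exactly Lemma~\ref{lem:basis.Se.closure.k[t1/e]}(2); for a general ramification type the multiplicativity of discriminants under the tensor-product decomposition $\hat{\OO}_L = \bigotimes_i G(k[[t^{1/e_i}]]/k[[t]])$, combined with the fact that the discriminant of a tensor product of rings is the appropriate product of powers of the factor discriminants, should yield the additivity in $\sum_i(e_i-1)$. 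The main obstacle I anticipate is controlling precisely how the isotypic decomposition of $\OO_L$ into the $\OO_\lambda$ interacts with this tensor-product decomposition over a point with several ramification indices: the irreducible $V_\lambda$ does not sit inside a single tensor factor but is distributed across the induced representation $\Ind_{S_{e_1}\times\cdots\times S_{e_s}}^{S_d}(\cdots)$, so one must carefully track the contribution of each $V_\lambda$ to the local discriminant through the branching and induction. Handling this cleanly — rather than by a direct matrix computation — is where I would spend most of the effort, likely by again invoking the vanishing $\Tr(\omega_{\lambda,i}\omega_{\lambda',j})=0$ for $\lambda\neq\lambda'$ to split the discriminant isotypically and then matching exponents via a global count, using the identity $\sum_{\lambda\vdash d} p(\lambda)\dim V_\lambda(d-1) = (d-1)d!/2$ as the consistency check that the local exponents sum correctly.
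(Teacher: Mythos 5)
Your overall skeleton matches the paper's: localize (the statement is local, and the \'etale base change $k[t]\to k^{\sep}[t]$ reduces to $k$ separably closed), use the decomposition $\hat{\OO}_L \cong \Ind_{S_{e_1}\times\cdots\times S_{e_r}}^{S_d}\bigl(\bigotimes_i R_{e_i}\bigr)$, take Lemma~\ref{lem:basis.Se.closure.k[t1/e]} as the engine, and split the discriminant isotypically via trace orthogonality. However, the step you explicitly defer --- how $V_\lambda$ is distributed across the tensor factors --- is the actual heart of the proof, and the substitute you propose cannot close it. Knowing the total local discriminant of $\hat{\OO}_L$ and checking that the \emph{claimed} isotypic exponents sum to it (via $\sum_{\lambda\vdash d}p(\lambda)\dim V_\lambda = d!/2$) does not determine the individual exponents: any assignment of exponents with the correct sum passes this test. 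The forcing-by-total mechanism does work in Lemma~\ref{lem:basis.Se.closure.k[t1/e]}, but only because there one has, for \emph{every} $\lambda$ simultaneously, a per-$\lambda$ upper bound coming from the explicit higher Specht elements $\psi_0(F_T^S)$ and their integrality at infinity; upper bounds summing to the exact total force equality everywhere. For a point with several ramification indices you have produced no such per-$\lambda$ bounds, so the global count has nothing to force.

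What is actually needed (and what the paper does) is the branching-rule refinement of your count. Decompose
\[
\Res^{S_d}_{S_{e_1}\times\cdots\times S_{e_r}} V_\lambda \;=\; \bigoplus_{\mu=(\mu_1,\ldots,\mu_r)} a_\mu^\lambda\, V_{\mu_1}\otimes\cdots\otimes V_{\mu_r},
\]
build a $k[t]$-basis of the $\lambda$-isotypic part of the induced module out of tensors $\omega_{\mu_1,\ell_1}\otimes\cdots\otimes\omega_{\mu_r,\ell_r}$ of higher-Specht basis elements of the factors $R_{e_i,\mu_i}$, and then evaluate the resulting discriminant exponent using Corollary~\ref{cor:sum.maximal.reading.word} together with the identity, valid for each fixed $i$,
\[
p(\lambda) \;=\; \sum_{\mu} a_\mu^\lambda\, p(\mu_i)\prod_{j\neq i}\dim V_{\mu_j},
\]
which follows by evaluating the restricted character at a transposition placed inside the $i$-th factor (all transpositions in $S_d$ are conjugate, so the left side is independent of $i$). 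Multiplying by $(e_i-1)$ and summing over $i$ is precisely what converts the single-block exponents $p(\mu_i)(e_i-1)\dim V_{\mu_i}$ into the claimed $p(\lambda)\dim V_\lambda\sum_i(e_i-1)$. Your identity $\sum_\lambda p(\lambda)\dim V_\lambda(d-1)=(d-1)d!/2$ is only the further sum of this over all $\lambda$; without the per-$\lambda$ version, the distribution of the discriminant among the isotypic pieces is underdetermined and the lemma does not follow.
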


\begin{proof}
The statement is local, so it is enough to prove that 
\[
\disc \hat{\OO}_\lambda = \left( \disc \hat{\OO}_K \right)^{p(\lambda) \dim V_\lambda}.
\]
Assume that $0$ has ramification pattern $(e_1, \ldots, e_r)$ in $C\to \PP^1$. As mentioned above, we have that
\[
\hat{\OO}_L = \Ind_{S_{e_1}\times\ldots \times S_{e_r}}^{S_d}\left(\bigotimes_i G(k[t^{1/e_i}]/k[t])\otimes_{k[t]} k[[t]]\right).
\]
As in Section~\ref{sec:Se.closure.k[t1/e]}, denote $G(k[t^{1/e_i}]/k[t])$ by $R_{e_i}$. Since the discriminant passes through the completion, it is enough to prove that 
\[
\disc\left( \Ind_{S_{e_1}\times\ldots \times S_{e_r}}^{S_d}\big( \bigotimes_i R_{e_i} \big)_\lambda \right)  = \left(t^{p(\lambda)\dim V_\lambda \sum_i (e_i-1)}\right).
\]
To do this, we construct a $k[t]$-basis for the module on the left hand side. We can decompose 
\[
\Res_{S_{e_1}\times \ldots \times S_{e_r}}^{S_d} V_\lambda = \bigoplus_{(\mu_1, \ldots, \mu_r)} a_\mu^\lambda V_{\mu_1}\otimes \ldots \otimes V_{\mu_r},
\] 
where the sum is over all partitions $\mu_1, \ldots, \mu_r$ of $e_1, \ldots, e_r$, and $a_\mu^\lambda\in \ZZ$. Note that then for every $i$
\[
\chi_\lambda = \sum_{\mu_i\vdash e_i} \left( \sum_{\mu_1, \ldots, \hat{\mu}_i, \ldots, \mu_r} a_\mu^\lambda \prod_{j\neq i} \dim V_{\mu_j}\right) \cdot \chi_{\mu_i}.
\] 
For any $\mu_i\vdash e_i$ we fix a standard Young tableau $T_{\mu_i}$ and define the set 
\[
\{\omega_{\mu_i, 1}, \ldots, \omega_{\mu_i, \dim V_{\mu_i}}\} = \{\psi_0(F_T^S) \mid S\in \ST(\mu_i)\}.
\]
By Lemma~\ref{lem:basis.Se.closure.k[t1/e]}, acting with $S_{e_i}$ on $\{\omega_{\mu_i, j}\}_j$ gives a $k[t]$-basis for $R_{e_i, \mu_i}$. Let $\mu_1, \ldots, \mu_r$ be partitions of $e_1, \ldots, e_r$, and let $\ell_i\in \{1, \ldots, \dim V_{\mu_i}\}$. For each element 
\[
\omega(\mu, \ell) = \omega_{\mu_1, \ell_1}\otimes \ldots \otimes \omega_{\mu_r, \ell_r}\in \bigotimes_i R_{e_i, \mu_i}
\]
we obtain $a_{\mu}^\lambda$ elements in 
\[
\Ind_{S_{e_1}\times\ldots \times S_{e_r}}^{S_d}\big( \bigotimes_i R_{e_i} \big)_\lambda
\] 
each of which generates a different copy of $V_\lambda$ upon tensoring with $k(t)$. Together, these elements will form a $k[t]$-basis for $\Ind_{S_{e_1}\times\ldots \times S_{e_r}}^{S_d}\big( \bigotimes_i R_{e_i} \big)_\lambda$. When we compute the discriminant we obtain
\begin{align*}
\disc \left( \Ind_{S_{e_1}\times\ldots \times S_{e_r}}^{S_d}\big( \bigotimes_i R_{e_i} \big)_\lambda \right) &= t^{\dim V_\lambda \sum_\mu \sum_{S_i\in \ST(\mu_i)} a_\mu^\lambda \sum_i m(S_i)} \\
&= t^{\dim V_\lambda \sum_i \sum_{\mu_1, \ldots, \hat{\mu}_i, \ldots, \mu_r} \sum_{\mu_i} a_\mu^\lambda p(\mu_i)(e_i-1)\prod_{j\neq i} \dim V_{\mu_j}} \\
&= t^{\dim V_\lambda p(\lambda) \sum_i (e_i-1)},
\end{align*}
where we have used Corollary~\ref{cor:sum.maximal.reading.word}. Since the characteristic of $k$ does not divide $d$, the extension $\OO_K/k[t]$ is tamely ramified and we have that $\disc(\OO_K/k[t]) = t^{\sum (e_i-1)}$. Hence we conclude that indeed
\[
\disc(\OO_\lambda/k[t]) = \disc(\OO_K/k[t])^{\dim (V_\lambda) p(\lambda)}. \qedhere
\]
\end{proof}

We can deduce our volume formula.

\begin{corollary}[Volume formula]\label{cor:volume.formula}
Let $\lambda$ be a partition of $d$. Then
\[
\vol_K(\lambda) = p(\lambda)\cdot (g+d-1).
\]
\end{corollary}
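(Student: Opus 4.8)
The plan is to reduce the volume formula to the discriminant formula already established in Lemma~\ref{lem:discr.formula.lambda.part}, using the standard relationship between scrollar invariants (successive minima) and the discriminant. First I would recall how the scrollar invariants of an isotypic component $W_\lambda$ are defined: they come from choosing a reduced basis $v_1, \ldots, v_N$ of $\OO_\lambda$ over $k[t]$ (where $N = \dim(V_\lambda)^2$) together with integers $r_1 \leq \ldots \leq r_N$ such that $t^{-r_i} v_i$ forms a $k[t^{-1}]$-basis of $\OO_{\lambda, \infty}$. The key observation is that the sum of these scrollar invariants $\sum_i r_i$ is exactly computed by comparing the discriminants of $\OO_\lambda$ at $0$ and at $\infty$.

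More precisely, I would argue as follows. The discriminant $\disc(\OO_\lambda / k[t])$, as an element of $k[t]$ up to units, has degree equal to $\sum_i r_i$ times a suitable factor, because of how the change of basis $v_i \mapsto t^{-r_i} v_i$ affects the trace form. Concretely, $\disc(\OO_{\lambda,\infty} / k[t^{-1}])$ computed in the basis $t^{-r_i} v_i$ differs from $\disc(\OO_\lambda / k[t])$ computed in the basis $v_i$ by the factor $t^{-2 \sum_i r_i}$ (coming from the bilinearity of the trace form in each of the two basis vectors). Since both discriminants are the local contributions at $0$ and $\infty$ of a global divisor-class relation on $\PP^1$, and using that the scrollar invariants record precisely the gluing data between $\Spec \OO_\lambda$ and $\Spec \OO_{\lambda,\infty}$, the sum $\vol_K(\lambda) = \sum_{i} r_i$ (with the correct multiplicity bookkeeping, since each $e_{\lambda, j}$ appears $\dim V_\lambda$ times) equals $\tfrac{1}{2}$ of the total degree of the discriminant divisor of $\OO_\lambda$ on $\PP^1$. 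This is the standard ``Minkowski's second theorem / conductor-discriminant'' bookkeeping alluded to in the reference to~\cite[Sec.\,7]{hessRR}.

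Plugging in Lemma~\ref{lem:discr.formula.lambda.part}, the total degree of the discriminant divisor of $\OO_\lambda$ equals $p(\lambda)\dim V_\lambda$ times the total degree of the discriminant divisor of $\OO_K$. For the base curve $K$, the usual scrollar invariants are $0, e_1, \ldots, e_{d-1}$ with $\sum e_i = g + d - 1$, and the corresponding discriminant relation gives that the total degree of $\disc \OO_K$ is $2(g+d-1)$ (this is essentially the Riemann--Hurwitz / Riemann--Roch computation underlying inequality~(1) in the introduction). Dividing out the matching $\dim V_\lambda$ factors, I would obtain
\[
\vol_K(\lambda) = p(\lambda) \cdot (g + d - 1),
\]
as desired.

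The main obstacle, I expect, will be getting the multiplicity bookkeeping exactly right: the discriminant of $\OO_\lambda$ is computed over a $k[t]$-basis of size $\dim(V_\lambda)^2$, whereas the volume $\vol_K(\lambda)$ is a sum over only the $\dim V_\lambda$ distinct invariants $e_{\lambda, j}$ (each appearing with multiplicity $\dim V_\lambda$). One must verify that the discriminant-to-minima relation introduces exactly the factor of $\dim V_\lambda$ that cancels against the one in Lemma~\ref{lem:discr.formula.lambda.part}, leaving a clean identity. I would handle this carefully by writing the sum of all scrollar invariants of $W_\lambda$ (counted with multiplicity) as $\dim(V_\lambda) \cdot \vol_K(\lambda)$, matching it to $\tfrac{1}{2}\deg \disc \OO_\lambda = p(\lambda)\dim(V_\lambda)(g+d-1)$, and then dividing by $\dim V_\lambda$. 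A secondary subtlety is ensuring the discriminant-versus-minima comparison holds over $k[t]$ globally rather than only locally; but since everything in sight passes through completion and base change (as already exploited throughout Section~\ref{sec:Se.closure.k[t1/e]}), this should follow from the same localization arguments used for Lemma~\ref{lem:discr.formula.lambda.part}.
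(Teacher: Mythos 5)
Your proposal is correct and follows essentially the same route as the paper: the paper likewise deduces the volume formula from Lemma~\ref{lem:discr.formula.lambda.part} by comparing the discriminant matrix of a reduced basis of $\OO_\lambda$ with the corresponding matrix for $\OO_{\lambda,\infty}$ (the basis scaled by $t^{-e_{\lambda,i}}$), which produces exactly your factor $t^{-2\dim(V_\lambda)\vol_K(\lambda)}$, and then equates this with the ratio of the two discriminants supplied by the lemma. The only cosmetic difference is that the paper normalizes the degree bookkeeping via a reduced generator $\delta$ of the sign component $\OO_{(1^d)}$ (with $\delta^2 = \disc \OO_K$ and scrollar invariant $g+d-1$), whereas you appeal directly to $\sum_i e_i = g+d-1$ and the total degree of the discriminant divisor; these are equivalent.
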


\begin{proof}
The representation $V_{(1^d)}$ is one-dimensional. Let $\delta\in \OO_L$ be a reduced basis of $\OO_{(1^d)}$ and note that $\delta^2\in k[t]$. Then by Lemma \ref{lem:discr.formula.lambda.part}, we have that $\disc (\OO_K/k[t]) = \delta^2$. Let $\omega_1, \ldots, \omega_r$ be a reduced basis for $\OO_\lambda$, where $r = \dim (V_\lambda)^2$, with scrollar invariants $e_{\lambda, 1}, \ldots, e_{\lambda, r}$. Consider the discriminant matrix $T = (\Tr(\omega_i\omega_j))_{i,j}$, which has determinant 
\[
\disc(\OO_\lambda/k[t]) = \disc(\OO_K/k[t])^{p(\lambda)\dim (V_\lambda)} = \delta^{2p(\lambda)\dim(V_\lambda)},
\]
by Lemma~\ref{lem:discr.formula.lambda.part}. We now play the same game over infinity, so we define $T_\infty = (\Tr(t^{-e_{\lambda, i}-e_{\lambda, j} } \omega_i\omega_j)))_{i,j}$. On the one hand, by simply taking out all powers of $t$ from $T_\infty$, we have that
\[
\frac{\det T}{\det T_\infty} = t^{\dim (V_\lambda)(2 \vol_K \lambda )}.
\]
On the other hand, the discussion above shows that
\begin{align*}
\frac{\det T}{\det T_\infty} &= \frac{\disc (\OO_\lambda/k[t])^{p(\lambda)\dim (V_\lambda)}}{\disc (\OO_{\infty, \lambda}/k[t^{-1}])^{p(\lambda)\dim (V_\lambda)}} = \frac{\delta^{2p(\lambda)\dim (V_\lambda)}}{(t^{-g-d+1}\delta)^{2p(\lambda)\dim (V_\lambda)}} \\
 &= t^{(2g+2d-2)p(\lambda)\dim(V_\lambda)},
\end{align*}
from which the result follows.
\end{proof}

As for resolvent curves, we obtain the following.

\begin{corollary}\label{lem:discr.formula.resolvents}
Let $H$ be a subgroup of $S_d$. Then
\[
\disc \OO_L^H = \left( \disc \OO_K \right)^{p(H)},
\]
and similarly for $\OO_{L, \infty}^H$, where 
\[
p(H) = \frac{1}{2}\left( \dim \Ind_H^{S_d} \mathbf{1} -  \chi_{\Ind_H^{S_d} \mathbf{1} }((12))\right).
\]
\end{corollary}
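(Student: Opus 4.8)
The plan is to reduce the statement to Lemma~\ref{lem:discr.formula.lambda.part} by first splitting $\OO_L^H$ into isotypic components, computing each discriminant separately, and then reassembling via a character identity. Since the $S_d$-action preserves each isotypic component $W_\lambda$ of $L$ and $d!$ is invertible in $k[t]$, the lattice decomposes integrally as $\OO_L=\bigoplus_{\lambda\vdash d}\OO_\lambda$, and taking $H$-invariants gives $\OO_L^H=\bigoplus_{\lambda}\OO_\lambda^H$ with $\OO_\lambda^H=W_\lambda^H\cap\OO_L$. For $\lambda\neq\lambda'$ and any $\omega\in\OO_\lambda^H$, $\omega'\in\OO_{\lambda'}^H$, the product $\omega\omega'$ generates a representation whose irreducible constituents are among those of $V_\lambda\otimes V_{\lambda'}$; as $S_d$-representations are self-dual, this contains no trivial summand, so $\Tr_{L/k(t)}(\omega\omega')=0$, exactly as in the discussion preceding Lemma~\ref{lem:basis.Se.closure.k[t1/e]}. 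Hence the trace form is block-diagonal along the isotypic decomposition and
\[
\disc\OO_L^H=\prod_{\lambda\vdash d}\disc\OO_\lambda^H .
\]
It then suffices to prove, for each $\lambda$, the refined identity $\disc\OO_\lambda^H=(\disc\OO_K)^{p(\lambda)\dim V_\lambda^H}$, where $\dim V_\lambda^H=\mult(V_\lambda,\Ind_H^{S_d}\mathbf{1})$ by Lemma~\ref{lem: induced and fixed space}.

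For this per-$\lambda$ identity I would exploit the multiplicity-space structure. Over $k(t)$ we write $W_\lambda\cong V_\lambda\otimes_{k(t)}U_\lambda$ with $\dim U_\lambda=\dim V_\lambda$, where $S_d$ acts only on the first factor and $U_\lambda$ carries all of the arithmetic. Because $d!$ is invertible, $k[t][S_d]$ is separable, so the lattice $\OO_\lambda$ splits as $\tilde V_\lambda\otimes_{k[t]}\tilde U_\lambda$, where $\tilde V_\lambda=V_\lambda^{(0)}\otimes_k k[t]$ is the constant $k[t]$-form of the irreducible $V_\lambda$ and $\tilde U_\lambda$ is a multiplicity lattice of rank $\dim V_\lambda$. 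The trace form factors as a tensor product of an $S_d$-invariant form on $\tilde V_\lambda$ with a form on $\tilde U_\lambda$; since $\tilde V_\lambda$ and its invariant form are defined over $k$, the former has unit discriminant and all $t$-dependence sits in $\tilde U_\lambda$. Comparing with the case $H=1$, namely Lemma~\ref{lem:discr.formula.lambda.part}, gives $\disc\tilde U_\lambda=(\disc\OO_K)^{p(\lambda)}$ up to a unit. As $H$ acts only on the first factor, $\OO_\lambda^H=\tilde V_\lambda^H\otimes_{k[t]}\tilde U_\lambda$, so the tensor-product discriminant formula yields
\[
\disc\OO_\lambda^H=\disc(\tilde V_\lambda^H)^{\dim\tilde U_\lambda}\cdot(\disc\tilde U_\lambda)^{\dim V_\lambda^H}=(\disc\OO_K)^{p(\lambda)\dim V_\lambda^H}
\]
up to a unit, as desired.

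Finally I would carry out the representation-theoretic bookkeeping. Substituting the per-$\lambda$ identity into the product gives $\disc\OO_L^H=(\disc\OO_K)^{\sum_{\lambda}p(\lambda)\dim V_\lambda^H}$. Writing $\Ind_H^{S_d}\mathbf{1}\cong\bigoplus_\lambda V_\lambda^{\oplus\,\dim V_\lambda^H}$ and evaluating dimensions and characters at a transposition gives $\dim\Ind_H^{S_d}\mathbf{1}=\sum_\lambda\dim V_\lambda^H\dim V_\lambda$ and $\chi_{\Ind_H^{S_d}\mathbf{1}}((12))=\sum_\lambda\dim V_\lambda^H\,\chi_\lambda((12))$, so that
\[
\sum_{\lambda\vdash d}p(\lambda)\dim V_\lambda^H=\tfrac12\sum_{\lambda}\dim V_\lambda^H\big(\dim V_\lambda-\chi_\lambda((12))\big)=p(H),
\]
which is the claim; the statement for $\OO_{L,\infty}^H$ is identical, working over $k[t^{-1}]$.

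I expect the crux to be the per-$\lambda$ identity, and specifically the two integral claims underlying it: that over $k[t]$ the isotypic lattice $\OO_\lambda$ genuinely splits as (constant representation)$\otimes$(multiplicity lattice), and that the trace form factors as a tensor product whose $V_\lambda$-factor has unit discriminant. Both rely on $d!$ being invertible. Should one prefer to avoid the separability of $k[t][S_d]$, the same identity can instead be obtained by rerunning the local computation of Lemma~\ref{lem:discr.formula.lambda.part} with $H$-fixed vectors inserted, at the cost of a Mackey-theoretic analysis of $\big(\Ind_{S_{e_1}\times\cdots\times S_{e_r}}^{S_d}\bigotimes_i R_{e_i}\big)^H$.
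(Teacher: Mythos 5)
Your proof is correct, and at the level of overall skeleton it matches what the paper intends: decompose $\OO_L^H$ orthogonally along isotypic components, establish a per-$\lambda$ discriminant identity, and finish with the bookkeeping identity $\sum_{\lambda\vdash d}p(\lambda)\dim V_\lambda^H=p(H)$. The genuine difference is in how the multiplicity refinement $\disc\OO_\lambda^H=(\disc\OO_K)^{p(\lambda)\dim V_\lambda^H}$ is obtained. The paper's proof is a one-line citation of Lemma~\ref{lem:discr.formula.lambda.part} together with Theorem~\ref{thm:scrollar.invs.resolvent}: the mechanism there is that the reduced-basis structure of resolvents (isotypic pieces entering with multiplicity $\mult(V_\lambda,\Ind_H^{S_d}\mathbf{1})$) lets one rerun the local higher-Specht computation of Lemma~\ref{lem:discr.formula.lambda.part} with the multiplicity space cut down from $\dim V_\lambda$ to $\dim V_\lambda^H$ --- essentially the ``local Mackey'' alternative you flag at the end of your proposal. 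Your main route instead derives the refined identity formally from the $H=1$ statement alone: since $k[S_d]$ already splits as a product of matrix algebras over $k$ (characteristic $0$ or $>d$), one has $k[t][S_d]\cong\prod_\lambda M_{n_\lambda}(k[t])$, Morita theory gives the integral splitting $\OO_\lambda\cong\tilde{V}_\lambda\otimes_{k[t]}\tilde{U}_\lambda$ with $\tilde{U}_\lambda$ free (torsion-free over a PID), and Schur's lemma plus flat base change forces the trace form to factor as $\beta_\lambda\otimes\gamma_\lambda$ with $\beta_\lambda$ defined over $k$, hence of unit discriminant. This buys a proof that consumes only the \emph{statement} of Lemma~\ref{lem:discr.formula.lambda.part} and general algebra, with no further local computation; the paper's route instead reuses machinery it has already built. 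Two small points you should tighten: (i) the discriminant of $\OO_L^H$ is intrinsically that of the trace form of the rank-$[S_d:H]$ algebra $\OO_L^H/k[t]$, which agrees with the restriction of $\Tr_{L/k(t)}$ only up to the unit $\#H$ (harmless, since the paper takes discriminants up to units, but it should be said, and it uses that $S_d$ acts on the geometric points of $L$ as the regular $S_d$-set, so $H$ acts freely); (ii) in the final Kronecker-product step you need $\beta_\lambda$ to remain nondegenerate on $V_\lambda^H$, which holds because the $H$-isotypic decomposition of $V_\lambda$ is orthogonal for the $H$-invariant form $\beta_\lambda$.
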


Equivalently, one may define $p(H)$ as 
\[
p(H) = (d-2)! \frac{\# \{\text{transpositions } \sigma\notin H\} }{\# H}.
\]

\begin{proof}
This follows from Lemma~\ref{lem:discr.formula.lambda.part} and Theorem~\ref{thm:scrollar.invs.resolvent}.
\end{proof}

One may also state this result as a computation of the (arithmetic) genus of the resolvent curve $\res_H C$.

\begin{corollary}\label{cor:genus.resolvent}
Let $H$ be a subgroup of $C$ and suppose that $\res_H C$ is irreducible. Then the (arithmetic) genus of $\res_H C$ is equal to
\[
p(H)(g+d-1) + 1 - [S_d:H].
\]
\end{corollary}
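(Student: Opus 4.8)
The plan is to compute the Euler characteristic $\chi(\OO_{\res_H C})$ directly from the scrollar structure and then convert it into the arithmetic genus. Write $n = [S_d:H]$ for the degree of $\res_H\varphi$ and let $r_1\le \ldots \le r_n$ be the scrollar invariants of $\res_H C$, i.e. those of $\OO_L^H$. By construction $\res_H C$ is glued from $\Spec \OO_L^H$ and $\Spec \OO_{L,\infty}^H$ along a reduced basis $v_1,\ldots,v_n$ and $t^{-r_1}v_1, \ldots, t^{-r_n}v_n$, so on each chart the pushforward of the structure sheaf is free of rank $n$ and the transition function on the $i$-th summand is a power of $t$ of exponent $r_i$. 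This exhibits
\[
(\res_H\varphi)_* \OO_{\res_H C} \cong \bigoplus_{i=1}^{n} \OO_{\PP^1}(-r_i),
\]
in the same convention as equation~\eqref{eq:splitting.varphi*OC}; the identification uses only that $\OO_L^H$ and $\OO_{L,\infty}^H$ are free over $k[t]$ and $k[t^{-1}]$, so it remains valid even when $\res_H C$ is singular.

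Since $\res_H\varphi$ is finite, $\chi(\OO_{\res_H C}) = \chi\big((\res_H\varphi)_*\OO_{\res_H C}\big) = \sum_{i=1}^{n}\chi(\OO_{\PP^1}(-r_i)) = \sum_{i=1}^n (1 - r_i) = n - \sum_{i=1}^n r_i$, using $\chi(\OO_{\PP^1}(-r)) = 1-r$. The hypothesis that $\res_H C$ is irreducible (it is reduced by construction) guarantees it is geometrically connected, so $h^0(\OO_{\res_H C}) = 1$ and the arithmetic genus is $p_a = 1 - \chi(\OO_{\res_H C}) = \sum_{i=1}^n r_i - [S_d:H] + 1$. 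It therefore suffices to prove $\sum_{i=1}^n r_i = p(H)(g+d-1)$.

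For this I would mirror the proof of the volume formula, Corollary~\ref{cor:volume.formula}. Let $\omega_1,\ldots,\omega_n$ be a reduced basis of $\OO_L^H$ with scrollar invariants $r_i$, set $T = (\Tr(\omega_i\omega_j))_{i,j}$ and $T_\infty = (\Tr(t^{-r_i}\omega_i \cdot t^{-r_j}\omega_j))_{i,j} = D\,T\,D$ with $D = \diag(t^{-r_1},\ldots,t^{-r_n})$. Pure linear algebra gives $\det T/\det T_\infty = t^{2\sum r_i}$, while $\det T = \disc \OO_L^H$ and $\det T_\infty = \disc \OO_{L,\infty}^H$. By Corollary~\ref{lem:discr.formula.resolvents} these equal $(\disc \OO_K)^{p(H)}$ and $(\disc \OO_{K,\infty})^{p(H)}$, and applying the same linear-algebra identity to a reduced basis $1, w_1,\ldots,w_{d-1}$ of $\OO_K$ (scrollar invariants $0, e_1,\ldots,e_{d-1}$ summing to $g+d-1$) gives $\disc \OO_K/\disc \OO_{K,\infty} = t^{2(g+d-1)}$. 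Hence $\det T/\det T_\infty = t^{2p(H)(g+d-1)}$, and comparing exponents yields $\sum_i r_i = p(H)(g+d-1)$, as required. Alternatively, one may bypass discriminants entirely: Theorem~\ref{thm:scrollar.invs.resolvent} identifies the multiset $\{r_i\}$ with the union over $\lambda\vdash d$ of the scrollar invariants of $\lambda$, each taken $m_\lambda := \mult(V_\lambda, \Ind_H^{S_d}\mathbf{1})$ times, so $\sum_i r_i = \sum_\lambda m_\lambda \vol_K(\lambda) = (g+d-1)\sum_\lambda m_\lambda\, p(\lambda)$ by Corollary~\ref{cor:volume.formula}, and $\sum_\lambda m_\lambda\, p(\lambda) = p(H)$ is immediate from the linearity of $\lambda\mapsto \tfrac12(\dim V_\lambda - \chi_\lambda((12)))$ in the character of $\Ind_H^{S_d}\mathbf{1}$.

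The computation is essentially bookkeeping once Corollary~\ref{lem:discr.formula.resolvents} (or Theorem~\ref{thm:scrollar.invs.resolvent} together with Corollary~\ref{cor:volume.formula}) is in hand, so the main obstacle is really the passage from scrollar invariants to the genus in the first two paragraphs: one must verify that $(\res_H\varphi)_*\OO_{\res_H C}$ splits as claimed and that $p_a = 1 - \chi$ applies on the nose. Both rely on $\res_H C$ being a reduced, geometrically connected projective curve, which is precisely where the irreducibility hypothesis is used and where the formula would otherwise require correction for disconnected resolvents.
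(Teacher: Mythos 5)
Your proof is correct, but it takes a different route from the paper's. The paper's own proof is a one-line deduction: it applies the genus--discriminant form of the Riemann--Hurwitz formula (cited from Neukirch) to Corollary~\ref{lem:discr.formula.resolvents}, so that $\deg \disc \OO_L^H + \deg \disc \OO_{L,\infty}^H = p(H)(2g+2d-2)$ immediately yields $2p_a - 2 = -2[S_d:H] + p(H)(2g+2d-2)$. You instead bypass any appeal to Riemann--Hurwitz for the (possibly singular) resolvent: you read off $\chi(\OO_{\res_H C}) = [S_d:H] - \sum_i r_i$ from the splitting of $(\res_H\varphi)_*\OO_{\res_H C}$ supplied by the gluing construction, and then prove $\sum_i r_i = p(H)(g+d-1)$, either by the $T$, $T_\infty$ determinant trick (which is literally the paper's proof of Corollary~\ref{cor:volume.formula} transposed from $\OO_\lambda$ to $\OO_L^H$, with Corollary~\ref{lem:discr.formula.resolvents} as input), or, more economically, by combining Theorem~\ref{thm:scrollar.invs.resolvent} with Corollary~\ref{cor:volume.formula} and the linearity of $\lambda\mapsto p(\lambda)$ on characters. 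In effect you re-prove the relevant instance of Riemann--Hurwitz by hand; what this buys is an argument that stays entirely within results already established in the paper, and your second variant makes transparent that the only global input is the volume formula.

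One caveat: your justification of $p_a = 1 - \chi(\OO_{\res_H C})$ is wrong as stated. Irreducibility over $k$ does \emph{not} imply geometric connectedness; an irreducible resolvent can have $H^0(\OO_{\res_H C})$ equal to a proper finite extension of $k$ (this happens whenever $L^H$ contains constants beyond $k$, e.g.\ when the Galois closure of $K/k(t)$ involves a constant field extension). The slip is harmless, because $p_a := 1 - \chi(\OO)$ is the definition of the arithmetic genus, so no claim about $h^0$ is needed anywhere. For the same reason, your closing sentence has the logic backwards: in your approach the irreducibility hypothesis is never actually used (Theorem~\ref{thm:scrollar.invs.resolvent} and Corollary~\ref{cor:volume.formula} hold regardless), so with this definition the displayed formula persists verbatim for reducible resolvents. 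The hypothesis is what allows the paper to apply Riemann--Hurwitz to the function field $L^H$ (which must be a field), and what makes the arithmetic genus the natural invariant to state.
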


\begin{proof}
This follows from the previous result via the Riemann--Hurwitz formula~\cite[Prop.\,III.3.13]{neukirch}.
\end{proof}

\subsection{Duality}

There is a duality statement connecting the scrollar invariants of a partition $\lambda$ and those of the dual partition $\lambda^*$, obtained by transposing the corresponding Young diagram. Recall that we have that $V_{\lambda^*} = V_\lambda \otimes V_{(1^d)}$.

\begin{proposition}[Duality]\label{prop:duality}
Let $\lambda$ be a partition of $d$ with scrollar invariants $\{e_{\lambda, i}\}_i$ in $L$. Then the scrollar invariants of $\lambda^*$ are given by $\{g+d-1-e_{\lambda, i}\}_i$.
\end{proposition}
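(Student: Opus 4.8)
The plan is to realize the duality as a trace-duality twisted by the sign element, after establishing that the different of $\OO_L$ over $k[t]$ is the principal ideal generated by the reduced basis $\delta$ of $\OO_{(1^d)}$.

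First I would record the basic features of the sign component. Since $V_{(1^d)}$ is one-dimensional, $W_{(1^d)}$ is a single copy of the sign representation; I take $\delta$ to be a reduced basis of $\OO_{(1^d)}$, so that $\OO_{(1^d)} = k[t]\delta$ and $\OO_{(1^d),\infty} = k[t^{-1}]t^{-(g+d-1)}\delta$ (its scrollar invariant being $g+d-1$), and $\delta^2$ generates $\disc\OO_K$ up to a unit, exactly as in the proof of Corollary~\ref{cor:volume.formula}. Because $\sigma\delta = \sgn(\sigma)\delta$, multiplication by $\delta$ carries each $W_\mu$ isomorphically onto $W_{\mu^*}$ (as $V_{(1^d)}\otimes V_\mu = V_{\mu^*}$), and it sends $\OO_\mu$ into $\OO_{\mu^*}$ since $\delta\in\OO_L$. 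Writing $\OO_\lambda^\vee = \{x\in W_\lambda : \Tr(x\OO_\lambda)\subseteq k[t]\}$ for the trace dual inside $W_\lambda$ (the $S_d$-invariant trace form pairs $W_\mu$ only with $W_\mu$, as each $V_\mu$ is self-dual), a reduced basis $v_1,\dots,v_r$ of $\OO_\lambda$ with invariants $r_j$ has a dual basis $v_j^\vee$ which is a $k[t]$-basis of $\OO_\lambda^\vee$ and for which $t^{r_j}v_j^\vee$ is a $k[t^{-1}]$-basis of the trace dual at infinity.

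The heart of the argument is the claim $\OO_{\lambda^*} = \delta\,\OO_\lambda^\vee$, together with the analogous statement at infinity with $\delta$ replaced by $t^{-(g+d-1)}\delta$. Granting this, $w_j := \delta v_j^\vee$ is a $k[t]$-basis of $\OO_{\lambda^*}$, while $t^{-(g+d-1)}\delta\cdot t^{r_j}v_j^\vee = t^{-(g+d-1-r_j)}w_j$ is a $k[t^{-1}]$-basis of $\OO_{\lambda^*,\infty}$; hence $\{w_j\}$ is a reduced basis of $W_{\lambda^*}$ with scrollar invariants $g+d-1-r_j$, which is exactly the assertion of the proposition (no separate appeal to Lemma~\ref{lem:candidate.basis.is.reduced} is needed). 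To prove the claim I would establish the single global statement $\OO_L^\vee = \delta^{-1}\OO_L$, i.e.\ that the different of $\OO_L/k[t]$ is $(\delta)$, and likewise $\OO_{L,\infty}^\vee = t^{g+d-1}\delta^{-1}\OO_{L,\infty}$. Since $\OO_L = \bigoplus_\mu\OO_\mu$ (the isotypic decomposition is defined over $k[t]$ as $\charac k\nmid d!$) and the trace form is block-diagonal for this decomposition, one gets $\OO_L^\vee = \bigoplus_\mu\OO_\mu^\vee$; intersecting $\OO_L^\vee = \delta^{-1}\OO_L$ with $W_{\lambda^*}$ and using $\delta^{-1}\OO_\lambda\subseteq W_{\lambda^*}$ yields $\OO_{\lambda^*}^\vee = \delta^{-1}\OO_\lambda$, which is the claim.

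It remains to prove $\OO_L^\vee = \delta^{-1}\OO_L$, and this is where the real work lies. The statement is local, so by the reductions of Section~\ref{sec:Se.closure.k[t1/e]} (completion, together with Theorems~\ref{thm:Sdclosure.monogenic} and~\ref{thm:Sdclosure.product}) it suffices to treat the building block $R_e = G(k[t^{1/e}]/k[t])$, where $\delta = \prod_{i<j}(\alpha_i-\alpha_j)$ and $\delta^2 = u\,t^{e-1}$ for a unit $u$. For the inclusion $\delta^{-1}R_e\subseteq R_e^\vee$ I would compute, for monomials $b,b'$ in the basis of Theorem~\ref{thm:Sdclosure.monogenic},
\[
\Tr(\delta^{-1}bb') = \delta^{-2}\Tr(\delta bb') = \delta^{-2}\,\delta\sum_{\sigma\in S_e}\sgn(\sigma)\,\sigma(bb') = \delta^{-1}a,
\]
where $a = \sum_{\sigma}\sgn(\sigma)\,\sigma(bb')$ is an alternant; every alternant is divisible by the Vandermonde with symmetric quotient, so $a = \pm\delta\,h(\alpha)$ for a symmetric polynomial $h$, whence $\Tr(\delta^{-1}bb') = \pm h(\alpha)$ is symmetric in the $\alpha_i$ and therefore lies in $k[t]$ (as $s_1=\dots=s_{e-1}=0$ and $s_e=\pm t$). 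Equality then follows by comparing discriminants: $\disc(R_e^\vee)=\disc(R_e)^{-1}$ while $\disc(\delta^{-1}R_e)=N(\delta)^{-2}\disc(R_e)$, and a valuation count (each of the $(e-1)!$ copies of $k(t^{1/e})$ in $R_e\otimes k(t)$ contributes $v_{t^{1/e}}(\delta)=\binom{e}{2}$) shows that $N(\delta)$ generates $\disc R_e = t^{(e-1)e!/2}$, forcing $\delta^{-1}R_e = R_e^\vee$; the computation over $k[t^{-1}]$ is identical. Finally I would propagate this through tensor products and inductions via multiplicativity of the codifferent, and check that the product of the local Vandermondes is the localization of the global $\delta$. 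I expect this local identity $R_e^\vee = \delta^{-1}R_e$—really the integrality $\Tr(\delta^{-1}R_eR_e)\subseteq k[t]$, packaged by the alternant-equals-Vandermonde-times-symmetric identity—to be the main obstacle, with the remainder being bookkeeping with the volume formula and the isotypic decomposition.
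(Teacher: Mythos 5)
Your proof is correct, and its starting point --- realizing the duality by multiplying the trace-dual basis by the sign element $\delta$ --- is the same as the paper's. But the way you establish integrality and, above all, equality is genuinely different. The paper only proves the containment $\delta\omega_i^*\in\OO_{\lambda^*}$ (and its analogue at infinity), verifying it locally with the higher Specht polynomial bases of Lemma~\ref{lem:basis.Se.closure.k[t1/e]} (the charge estimate $\ch(S)+\ch(S')<2\binom{e}{2}$ plus invertibility of a matrix of traces), and then gets reducedness of the candidate basis by combining the volume formula, Corollary~\ref{cor:volume.formula}, with Lemma~\ref{lem:candidate.basis.is.reduced}. You instead prove the stronger statement that the codifferent of $\OO_L/k[t]$ equals $\delta^{-1}\OO_L$ (and likewise over $k[t^{-1}]$), from which both the containment and the reducedness follow at once, with no appeal to the volume formula or to Lemma~\ref{lem:candidate.basis.is.reduced}; your observation that any basis $w_j$ of $\OO_{\lambda^*}$ with $t^{-s_j}w_j$ a basis of $\OO_{\lambda^*,\infty}$ already determines the scrollar invariants is the correct uniqueness statement. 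Your local input is also more elementary: on the building block $R_e$ the integrality $\Tr(\delta^{-1}R_e R_e)\subseteq k[t]$ follows from the fact that an antisymmetrization equals the Vandermonde times a symmetric polynomial (which the defining relations of the $S_e$-closure evaluate into $k[t]$), and equality follows from the discriminant comparison $N(\delta)^2=(\delta^2)^{e!}= \text{unit}\cdot t^{(e-1)e!}$ against $\disc R_e=t^{(e-1)e!/2}$, rather than from Specht polynomial combinatorics. What the paper's route buys is reuse of machinery it develops anyway (the volume formula and the Specht bases, both needed elsewhere); what your route buys is a cleaner and stronger intermediate result --- the different of $\OO_L/k[t]$ is principal, generated by the sign element --- and logical independence from Corollary~\ref{cor:volume.formula}. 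The two points you defer as bookkeeping are indeed routine but should be written out: trace duals of finite free algebras are multiplicative under tensor products and under products/induction, and the global $\delta$ agrees up to a unit of $k[[t]]$ with the (antisymmetrized) product of local Vandermondes, e.g.\ because both generate the rank-one lattice $\hat{\OO}_{(1^d)}$ and their squares have the same $t$-valuation $\sum_i(e_i-1)$ by tameness.
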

\begin{proof}
Let $\omega_1, \ldots, \omega_r$ be a reduced basis for $\OO_\lambda$, where $r=\dim (V_\lambda)^2$, with scrollar invariants $e_{\lambda, 1}, \ldots, e_{\lambda, r}$. Let $\delta$ be a reduced basis of the one-dimensional $\OO_{(1^d)}$. By Lemma \ref{lem:discr.formula.lambda.part} the scrollar invariant of $\delta$ is $g+d-1$. Since the extension $K/k(t)$ is separable, the trace pairing $\Tr: L\to k(t)$ is non-degenerate. Hence there exists a dual basis $\omega_1^*, \ldots, \omega_r^*$ to $\omega_1, \ldots, \omega_r$ with the property that $\Tr(\omega_i\omega_j^*)$ is $1$ if $i=j$ and $0$ if $i\neq j$. We claim that the elements
\[
\delta \omega_i^*, i=1, \ldots, \dim(V_{\lambda})^2
\]
form a reduced basis for $\OO_{\lambda^*}$. These elements are indeed contained in $W_{\lambda^*}$. The strategy for this will be to use Lemma~\ref{lem:candidate.basis.is.reduced}. First, note that the candidate scrollar invariants indeed sum up to the correct value, since
\[
\sum_i (g+d-1-e_i)  = \frac{1}{2} (\dim (V_\lambda) + \chi_{\lambda}((12)) )(g+d-1).
\]
So we want to prove that $\delta\omega_i^*$ is in $\OO_{\lambda^*}$ and that $t^{-(g+d-1-e_{\lambda, i}) }\delta\omega_i^*$ is in $\OO_{\infty, \lambda^*}$. 

To prove the claim, we work locally again. Indeed, $\delta\omega_i^*$ is in $\OO_{\lambda^*}$ if and only if it is contained in $\hat{\OO}_{\lambda^*}$ for every completion of $L$. So let us localize at $0$. Assume that $0$ ramifies as $(e_1, \ldots, e_s)$ in $C\to \PP^1$. Then 
\[
\hat{\OO}_L = \bigotimes_i G(k[[t^{1/e_i}]] / k[[t]]).
\]
We first focus on one of the components $G(k[[t^{1/e_i}]] / k[[t]])$. For these we have an explicit basis via higher Specht polynomials by Lemma~\ref{lem:basis.Se.closure.k[t1/e]}. Let $\mu$ be a partition of $e_i$. Let $T, S\in \ST(\mu)$ and $T', S'\in \ST(\mu^*)$. Then $\psi_0(F_T^S)\psi_0(F_{T'}^{S'})$ is in $\OO_L$, and so the coefficient of this product at $\delta$ is in $k[t]$. This coefficient is (up to an element of $k^\times$) $\Tr(\psi_0(F_T^S)\psi_0(F^{T'}_{S'})/\delta)$. Since $\ch(S) + \ch(S') < 2\binom{e}{2}$ if $\mu\neq (1^e)$ we have in fact that $\Tr(\psi_0(F_T^S)\psi_0(F^{T'}_{S'})/\delta)$ is in $k$. Since the matrix with these elements as its entries is invertible (as the $\psi_0(F_T^S)/\delta$ generate $W_{\lambda^*}$ as a representation), we conclude that the $\delta\psi_0(F_T^S)^*$ are in $G(k[[t^{1/e_i}]] / k[[t]])$. A similar strategy to the proof of Lemma~\ref{lem:discr.formula.lambda.part} allow us to conclude that $\delta \psi_0(F_T^S)^*$ is in $\hat{\OO}_L$. The same reasoning above the other points of $\PP^1$ give the desired conclusion.
\end{proof}

\subsection{Scrollar invariants of hooks}

As in the simply branched case, the scrollar invariants of hooks $(d-i, 1^i)$ have a simple description in terms of the usual scrollar invariants $e_i$ of $C\to \PP^1$. 

\begin{proposition}\label{prop:hooks}
Let $i\in \{0, \ldots, d-1\}$. The scrollar invariants of the partition $(d-i, 1^i)$ with respect to $L$ are 
\[
\left\{ \sum_{\ell\in S} e_\ell \mid S\subset \{1, \ldots, d-1\}, \#S = i \right\}.
\]
\end{proposition}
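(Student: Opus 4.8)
The plan is to realize the hook as an exterior power, $V_{(d-i,1^i)}\cong\wedgepow{i}V_{(d-1,1)}$, and to build a candidate reduced basis of $W_{(d-i,1^i)}$ out of the reduced basis of $\OO_K$ by taking $i\times i$ minors of the matrix of conjugates. Concretely, fix a reduced basis $1=w_0,w_1,\ldots,w_{d-1}$ of $\OO_K$ with scrollar invariants $0,e_1,\ldots,e_{d-1}$, and let $\iota_1,\ldots,\iota_d\colon K\to L$ be the conjugate embeddings $a\mapsto a^{(j)}$, so that $\iota_j(w_\ell)\in\OO_L$ and $S_d$ permutes the index $j$. For an $i$-subset $S\subseteq\{1,\ldots,d-1\}$ and an $i$-subset $J\subseteq\{1,\ldots,d\}$ set
\[
D_{S,J}=\det\bigl(\iota_{j}(w_{\ell})\bigr)_{j\in J,\ \ell\in S}\in\OO_L .
\]
Since $\sigma(D_{S,J})=\pm D_{S,\sigma(J)}$, for fixed $S$ the family $\{D_{S,J}\}_J$ spans a copy of $\wedgepow{i}$ of the permutation module, whose hook summand is a copy of $V_{(d-i,1^i)}$; call this copy $\Delta_S$.

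First I would record the two integrality statements that turn the $D_{S,J}$ into candidate basis vectors. They lie in $\OO_L$ as determinants of integral elements, and pulling $t^{-e_\ell}$ out of the $\ell$-th column gives
\[
t^{-r_S}D_{S,J}=\det\bigl(\iota_j(t^{-e_\ell}w_\ell)\bigr)_{j\in J,\ \ell\in S}\in\OO_{L,\infty},\qquad r_S=\sum_{\ell\in S}e_\ell,
\]
since $t^{-e_\ell}w_\ell\in\OO_{K,\infty}$. Because $\operatorname{char}k\nmid d!$, the isotypic projector $\pi_{(d-i,1^i)}\in k[S_d]$ has coefficients in $k$ and hence preserves both $\OO_L$ and $\OO_{L,\infty}$; therefore $v_{S,J}:=\pi_{(d-i,1^i)}D_{S,J}\in\OO_{(d-i,1^i)}$ and $t^{-r_S}v_{S,J}\in\OO_{(d-i,1^i),\infty}$, and the $v_{S,J}$ span $\Delta_S$.

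Next I would reduce the statement to a counting identity plus linear independence. Decomposing the transposition $(1\,2)$ on the standard representation into eigenvalues $+1$ (with multiplicity $d-2$) and $-1$ (once), the exterior power gives $\chi_{(d-i,1^i)}((1\,2))=\binom{d-2}{i}-\binom{d-2}{i-1}$, whence $p\bigl((d-i,1^i)\bigr)=\binom{d-2}{i-1}$. Corollary~\ref{cor:volume.formula} then yields $\vol_K\bigl((d-i,1^i)\bigr)=\binom{d-2}{i-1}(g+d-1)=\binom{d-2}{i-1}\sum_\ell e_\ell=\sum_{|S|=i}r_S$, exactly the sum of the claimed multiset. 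Consequently, if the $\binom{d-1}{i}=\dim V_{(d-i,1^i)}$ copies $\Delta_S$ are linearly independent, they span all of $W_{(d-i,1^i)}$ by dimension count; choosing inside each $\Delta_S$ a $k(t)$-basis among the $v_{S,J}$ produces a $k(t)$-basis of $W_{(d-i,1^i)}$ in $\OO_{(d-i,1^i)}$ whose candidate exponents equal $r_S$ on the block from $\Delta_S$. Their total $\binom{d-1}{i}\sum_{|S|=i}r_S$ matches the sum of the true scrollar invariants of $W_{(d-i,1^i)}$, namely $\dim V_{(d-i,1^i)}\cdot\vol_K$, so Lemma~\ref{lem:candidate.basis.is.reduced} forces the candidate exponents to be the scrollar invariants. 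Hence the scrollar invariants of $(d-i,1^i)$ are precisely $\{r_S:|S|=i\}$, as claimed (the boundary cases $i=0,d-1$ being immediate).

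The main obstacle is the linear independence of the $\Delta_S$ — equivalently, that minors built from distinct column sets $S$ contribute distinct copies of $V_{(d-i,1^i)}$ rather than collapsing onto a single copy. This is exactly where the naive approach fails and where the higher Specht polynomials enter. I would prove independence after base change, which is harmless since scrollar invariants are unchanged under separable base change (as shown at the start of Section~\ref{sec:Se.closure.k[t1/e]}) and independence over $k(t)$ may be tested in any single completion of $L$. Localizing at a totally ramified point reduces $\hat\OO_L$ to a tensor product of copies of $R_e=G(k[t^{1/e}]/k[t])$, where Lemma~\ref{lem:basis.Se.closure.k[t1/e]} and Theorem~\ref{thm:higher.specht.poly} supply an explicit basis graded by charge; matching the charges $\ch(S)$ of the relevant tableaux to the column sets $S$ separates the $\Delta_S$ by their leading terms. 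Alternatively, at a totally split point the $D_{S,J}$ become $i\times i$ minors of an invertible generalized Vandermonde matrix $\bigl(w_\ell^{(m)}\bigr)_{m,\ell}$, and one checks the exterior-power data for distinct $S$ are independent. I expect this verification — linking the combinatorics of charge words to the distinct subset sums $r_S$ — to be the technical heart of the argument, precisely as flagged in the introduction.
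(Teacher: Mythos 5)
Your overall strategy is the same one the paper uses (its proof simply defers to [Prop.\,1, syzrep], which runs exactly along these lines): realize the hook $(d-i,1^i)$ inside $\wedgepow{i}$ of the permutation module via $i\times i$ minors of conjugates of a reduced basis of $\OO_K$, record integrality at $0$ and at $\infty$, compute $p\bigl((d-i,1^i)\bigr)=\binom{d-2}{i-1}$ so that Corollary~\ref{cor:volume.formula} matches the sum $\sum_{|S|=i}r_S$, and close with Lemma~\ref{lem:candidate.basis.is.reduced}. All of that arithmetic is correct, including the Pascal-rule computation of $\chi_{(d-i,1^i)}((1\,2))$ and the identity $\sum_{|S|=i}r_S=\binom{d-2}{i-1}(g+d-1)$.

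However, the step you yourself flag as the technical heart --- that the $\binom{d-1}{i}$ hook copies $\Delta_S$ are linearly independent, equivalently that your $v_{S,J}$ really contain a $k(t)$-basis of $W_{(d-i,1^i)}$ --- is left unproven, and it is precisely the hypothesis of Lemma~\ref{lem:candidate.basis.is.reduced} that makes everything else bite; without it the candidate family could span a proper subspace and the exponent-sum bookkeeping proves nothing. Moreover, of your two proposed routes, the first is not available in general: a cover need not have any totally ramified point (a simply branched cover of degree $d\geq 3$ has none), so one cannot ``localize at a totally ramified point'' to reach a single $R_d$; and at an unramified point the charge-word machinery of Section~\ref{sec:Se.closure.k[t1/e]} gives no grading to exploit. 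The second route (a totally split point, which does exist in abundance after the harmless base change to $k^{\sep}$) can be made to work, but invertibility of the specialized matrix $N=\bigl(w_\ell^{(m)}\bigr)$ alone does not formally give what you need: independence of the hook components across distinct column sets $S$ requires an actual computation, e.g.\ pairing the minors $D_{S,J}$ against the analogous minors built from the trace-dual basis $w_\ell^*$ and checking, using the normalization $\Tr_{K/k(t)}(w_\ell)=0$ for $\ell\geq 1$, that the resulting Gram matrix is block diagonal with invertible blocks indexed by $S$. As written, your argument asserts rather than proves this, so the proposal has a genuine, though localized, gap; filling it with the dual-basis Gram computation (or an equivalent multiplicity-space argument) would bring it in line with the paper's proof.
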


\begin{proof}
This is identical to \cite[Prop.\,1]{syzrep}.
\end{proof}

\subsection{Maximal resolvents}

Compared to~\cite{syzrep}, our resolvent curves can become singular or reducible. So it is a natural question to classify precisely when this happens. In this section we prove Proposition~\ref{prop:resolvent.maximal}. This shows that $\res_H C$ being singular is determined representation theoretically, and depends only on the ramification of $C\to \PP^1$.

For the proof, we first rewrite what it means for $\res_H C$ to be maximal in terms of characters of $S_d$.

Let $\tilde{\OO}_L$ be the maximal order of $L$, i.e.\ it is the integral closure of $k[t]$ in $L$. Similarly define $\tilde{\OO}_{L, \infty}$ as the integral closure of $k[t^{-1}]$ in $L$. For $\lambda\vdash d$ we define $\tilde{\OO}_\lambda$ as $\tilde{\OO}_L \cap W_\lambda$, where $W_\lambda$ is the isotypic component of $\lambda$ in $L$. Let us say that $\lambda$ is \emph{maximal in $L$} if $\OO_\lambda = \tilde{\OO}_\lambda$. For $p$ a point in $\PP^1$, let us say that $\lambda$ is \emph{maximal in $L$ at $p$} if $\OO_\lambda$ is equal to $\tilde{\OO}_\lambda$ after localizing at $p$. By Theorem~\ref{thm:scrollar.invs.resolvent}, we see that $\OO_{L}^H$ is maximal if and only if for every partition $\lambda$ of $d$ such that $\mult(V_\lambda, \Ind_H^{S_d}\lambda)>0$ we have that $\lambda$ is maximal in $L$.

For a partition $e = (e_1, \ldots, e_r)$ of $d$ let $\rho\in S_d$ be a permutation with cycle type $(e_1, \ldots, e_r)$. For an integer $i$ we let $e^i$ be the partition of $d$ corresponding to the cycle type of the permutation $\rho^i$. We denote by $\lcm(e) = \lcm\{e_1, \ldots, e_r\}$. We begin by computing the discriminant of the maximal order of the resolvent.

\begin{lemma}\label{lem:disc.maximal.order.resolvent}
Let $H$ be a subgroup of $S_d$. Assume that $0$ ramifies as $e = (e_1, \ldots, e_r)$ in $\OO_K/k[t]$. Then the discriminant of the completion $\hat{\tilde{\OO}}_L^H$ of $\tilde{\OO}_L^H$ above $0$ is equal to
\[
\prod_{i=1}^{\lcm(e) -1} t^{\frac{n(H, e^i)}{\lcm(e)}},
\]
where for $\sigma\in S_d$, $n(H, \sigma) = \chi_{\Ind_H^{S_d}\mathbf{1}}(\id) - \chi_{\Ind_H^{S_d}\mathbf{1}}(\sigma)$.
\end{lemma}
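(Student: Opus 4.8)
The plan is to reduce to the local tame picture and then count the places of the resolvent above $0$ by Burnside's lemma, converting orbit counts into character values.

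First I would reduce to the case that $k$ is separably closed. Since the discriminant commutes with the étale base change $k[t]\to k^{\sep}[t]$ (exactly as in the lemma preceding Section~\ref{sec:Se.closure.k[t1/e]}), the exponent of $t$ in $\disc(\hat{\tilde{\OO}}_L^H)$ is unchanged, and over a separably closed field every residue field above $0$ is trivial, so all residue degrees are $1$. As the characteristic is $0$ or larger than $d$, each $e_i<\charac k$ and $\lcm(e)$ has only prime factors smaller than $\charac k$, so the maximal order $\tilde{\OO}_L^H$ of the resolvent is tamely ramified above $0$. For a tame place $\mathfrak{P}$ of $L^H$ over $0$ with ramification index $e_\mathfrak{P}$ the different exponent is $e_\mathfrak{P}-1$, and with residue degree $1$ the discriminant exponent is $e_\mathfrak{P}-1$ as well (see~\cite{neukirch}). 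Hence the exponent of $t$ in the discriminant equals $\sum_{\mathfrak{P}\mid 0}(e_\mathfrak{P}-1)=[S_d:H]-\#\{\text{places above }0\}$.

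Next I would identify the places and their ramification indices. Completing at $0$ gives $\hat K\cong\bigoplus_{i=1}^r k((t^{1/e_i}))$, whose Galois closure is $k((t^{1/N}))$ with $N=\lcm(e)$; the generator $\tau\colon t^{1/N}\mapsto\zeta_N t^{1/N}$ of $\Gal\cong\ZZ/N$ restricts on each summand to a generator of its cyclic Galois group, so $\tau$ permutes the $d$ sheets as a permutation $\rho$ of cycle type $e$ and order $N$. By the local form of the Bhargava--Satriano structure theorem (Theorem~\ref{thm:Sdclosure.product}) together with the standard tame description of a branched cover by its monodromy, the places of $\res_H C$ above $0$ correspond to the orbits of $\langle\rho\rangle$ acting on $S_d/H$, the ramification index of each place being the length of the corresponding orbit. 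In particular $\#\{\text{places above }0\}$ is the number of $\langle\rho\rangle$-orbits on $S_d/H$.

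Finally I would compute. By Burnside's lemma the number of orbits is $\frac1N\sum_{i=0}^{N-1}\chi_{\Ind_H^{S_d}\mathbf{1}}(\rho^i)$, since the number of cosets fixed by $\rho^i$ is the value of the permutation character at $\rho^i$, whose cycle type is $e^i$. Writing $D=[S_d:H]=\chi_{\Ind_H^{S_d}\mathbf{1}}(\id)$, the discriminant exponent becomes $D-\frac1N\sum_{i=0}^{N-1}\chi_{\Ind_H^{S_d}\mathbf{1}}(\rho^i)$; separating the $i=0$ term (which equals $D$) and rearranging yields $\frac1N\sum_{i=1}^{N-1}\bigl(D-\chi_{\Ind_H^{S_d}\mathbf{1}}(\rho^i)\bigr)=\frac1N\sum_{i=1}^{N-1}n(H,e^i)$, which is exactly the claimed exponent. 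The main obstacle is the middle step: rigorously matching the \emph{maximal} order $\tilde{\OO}_L^H$ of the $S_d$-closure (not merely $\OO_L^H$) with the orbit picture, i.e.\ that its integral closure has precisely one place per $\langle\rho\rangle$-orbit with ramification equal to the orbit length. Over the separably closed, tamely ramified local ring this follows from Abhyankar's lemma and the local Bhargava--Satriano theorem, but it must be spelled out carefully, since it is precisely this passage to the maximal order that distinguishes the present computation from that for $\OO_\lambda$ in Lemma~\ref{lem:discr.formula.lambda.part}.
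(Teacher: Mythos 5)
Your proof is correct, but it follows a genuinely different route from the paper's. The paper's proof is global in flavour: it invokes a discriminant formula of Lenstra, Pila and Pomerance, $(\disc \tilde{\OO}_L^H)^{d!} = \prod_{\sigma\neq\id} \Norm_{L/k(t)}(\II_\sigma)^{n(H,\sigma)}$, and then analyses which primes divide each ideal $\II_\sigma$ via inertia groups (cyclic, generated by a permutation $\rho$ of cycle type $e$), uses tameness to get square-freeness of $\II_\sigma$, counts the primes of $\tilde{\OO}_L$ above $0$ as $d!/\lcm(e)$, and exploits transitivity of the $S_d$-action. You instead work purely locally: reduce to separably closed $k$, use the tame conductor–discriminant computation $\sum_{\mathfrak{P}\mid 0}(e_\mathfrak{P}-1)=[S_d:H]-\#\{\text{places}\}$, identify the places of $L^H$ above $0$ with orbits of $\langle\rho\rangle$ on $S_d/H$ (with ramification index equal to orbit length), and finish with Burnside's lemma. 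Both arguments are sound and both hinge on the same two inputs (tameness and the cycle type of the local monodromy), but yours makes the averaged form $\frac{1}{\lcm(e)}\sum_i n(H,e^i)$ of the exponent conceptually transparent — it is literally an orbit count — whereas the paper's is shorter once the LPP formula is granted. One presentational correction to the step you yourself flag as delicate: Theorem~\ref{thm:Sdclosure.product} is not quite the right tool there, since it describes the (generally non-maximal) order $\OO_L$; after passing to the maximal order, only the generic fibre matters, and the orbit picture follows from the $k(t)[S_d]$-algebra isomorphism $L\cong \Ind_G^{S_d} M$ (\cite[Thm.\,2]{bhargavasatriano}, quoted in the paper's preliminaries) together with the standard dictionary between étale algebras over $k((t))$ and finite sets with an action of the tame monodromy: the maximal order of $L^H\otimes k((t))$ is the product of the valuation rings of its field components, and those components correspond exactly to the $\langle\rho\rangle$-orbits on $S_d/H$, each of orbit length $\ell$ contributing $k[[t^{1/\ell}]]$. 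With that substitution your argument is complete.
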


\begin{proof}
We rely on a discriminant formula due to Lenstra, Pila and Pomerance, to compute the discriminant of $\tilde{\OO}_L^H$~\cite[Thm.\,4.4]{hyperelliptic}. This result is only stated for number fields, but the formula carries over directly to our setting, and states that
\[
(\disc \tilde{\OO}_L^H)^{d!} = \prod_{\sigma\in S_d\setminus\{\id\}} \Norm_{L/k(t)} (\II_\sigma)^{\# \{\tau: L^H\hookrightarrow L \mid \sigma \circ \tau \neq \tau\}},
\]
where $\II_\sigma$ is the ideal of $\tilde{\OO}_L$ generated by all elements of the form $\sigma(x)-x$, for $x\in \tilde{\OO}_L$. Note that a non-zero prime $\mathcal{B}\subset \tilde{\OO}_L$ above $0$ divides $\II_\sigma$ if and only if $\sigma$ is in the inertia group of $\mathcal{B}$. By~\cite[SZ.\,1]{vanderwaerden} or~\cite[Cor.\,7.59]{milne_ant}, this inertia group is cyclic and generated by a permutation $\rho$ of cycle type $(e_1, \ldots, e_r)$. So in the product we can disregard those $\sigma$ which are not conjugate to a power of $\rho$. Now note that
\[
\# \{\tau: L^H\hookrightarrow L \mid \sigma \circ \tau \neq \tau\} = n(H, \sigma).
\]
Since $\charac k$ does not divide $d$, the extension $K/k(t)$ is tamely ramified, ensuring that $\II_\sigma$ is a square-free ideal, i.e.\ $\BB^2$ never divides $\II_\sigma$. This description of the inertia group moreover yields that the ramification index of $0$ in $\OO_L$ is equal to $\lcm (e)$, the least common multiple of the ramification indices of $0$ in $\OO_K$. Hence the number of primes above $0$ in $\tilde{\OO}_L$ is equal to $d! / \lcm (e)$. Since the action of $S_d$ is transitive on the primes above $0$ in $\tilde{\OO}_L$, a computation shows that
\[
\disc(\hat{\tilde{\OO}}_L^H/k[[t]]) = \prod_{i=1}^{\lcm(e)-1} t^{\frac{n(H, e^i)}{\lcm (e)}},
\]
as desired.
\end{proof}

\begin{lemma}\label{lem:resolvent.maximal.lem}
Let $\lambda$ be a non-trivial partition of $d$. Then $\lambda$ is maximal in $L$ if and only if for every ramification pattern $e = (e_1, \ldots, e_r)$ of $C\to \PP^1$ and every $i=1, \ldots, \ord e - 1$, we have that
\[
\frac{\dim V_\lambda - \chi_\lambda(e^i)}{\dim V_\lambda - \chi_\lambda((12))} = \frac{d-\#\mathrm{Fix}(e^i)}{2},
\]
where $\mathrm{Fix}(e^i)\subset \{1, \ldots, d\}$ is the set of fixed points of a permutation of cycle type $e^i$.
\end{lemma}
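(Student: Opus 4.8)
The plan is to detect maximality through discriminants. Since $\OO_\lambda \subseteq \tilde\OO_\lambda$ are two $k[t]$-lattices in the isotypic component $W_\lambda$, we have $\OO_\lambda = \tilde\OO_\lambda$ if and only if $\disc \OO_\lambda$ and $\disc\tilde\OO_\lambda$ agree up to a unit. Maximality is a local condition, so I would fix a point of $\PP^1$ and compare the two discriminants after completing there. At an unramified point both orders are étale and the comparison is automatic, so the only content is at a point with ramification pattern $e = (e_1, \ldots, e_r)$; write $m = \ord e = \lcm(e)$ and let $\rho$ have cycle type $e$. Thus $\lambda$ is maximal in $L$ if and only if the local discriminant exponents of $\OO_\lambda$ and $\tilde\OO_\lambda$ coincide at every such point, the argument over $\infty$ being identical with $k[t]$ replaced by $k[t^{-1}]$.

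For the first exponent I would invoke Lemma~\ref{lem:discr.formula.lambda.part}: since $K/k(t)$ is tamely ramified, $\disc\hat\OO_K = t^{\sum_j (e_j - 1)}$, and hence
\[
\deg \disc \hat\OO_\lambda = p(\lambda)\dim V_\lambda \sum_j (e_j - 1) = p(\lambda)\dim V_\lambda\,(d - r).
\]
The main work is the second exponent, the isotypic refinement of Lemma~\ref{lem:disc.maximal.order.resolvent}. Over the completion the inertia group is cyclic of order $m$, generated by $\rho$, acting tamely on a uniformizer $t^{1/m}$ by a primitive $m$-th root of unity $\zeta$. Decomposing $\hat L = \bigoplus_{j=0}^{m-1}\Ind_{\langle\rho\rangle}^{S_d}\chi_j$ along the $t^{1/m}$-grading (where $\chi_j(\rho) = \zeta^j$), this is an $S_d$-equivariant, valuation-compatible splitting, and the trace form pairs the grade-$j$ summand nondegenerately with the grade-$(m-j)$ summand, contributing one power of $t$ for each $j \neq 0$ and none for $j = 0$. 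Since the $\lambda$-isotypic part of the grade-$j$ summand has dimension $\dim V_\lambda\cdot\mult\bigl(V_\lambda, \Ind_{\langle\rho\rangle}^{S_d}\chi_j\bigr)$, counting basis elements gives
\[
\deg \disc \hat{\tilde\OO}_\lambda = \dim V_\lambda\bigl(\dim V_\lambda - \dim V_\lambda^{\langle\rho\rangle}\bigr) = \frac{\dim V_\lambda}{m}\sum_{i=1}^{m-1}\bigl(\dim V_\lambda - \chi_\lambda(e^i)\bigr),
\]
using $\dim V_\lambda^{\langle\rho\rangle} = \frac1m\sum_{i=0}^{m-1}\chi_\lambda(e^i)$. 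I expect this local discriminant computation to be the main obstacle; alternatively it can be extracted from Lemma~\ref{lem:disc.maximal.order.resolvent} with $H = 1$ by tracking isotypic components through the trace-orthogonality of the $W_\lambda$.

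Equating the two exponents, maximality at the chosen point becomes, after cancelling $\dim V_\lambda$ and multiplying by $m$,
\[
p(\lambda)\, m\,(d - r) = \sum_{i=1}^{m-1}\bigl(\dim V_\lambda - \chi_\lambda(e^i)\bigr).
\]
Here I would invoke the orbit-counting (Burnside) identity for $\langle\rho\rangle$ acting on $\{1, \ldots, d\}$, namely $\sum_{i=0}^{m-1}\#\mathrm{Fix}(\rho^i) = mr$, which rearranges to $\sum_{i=1}^{m-1}(d - \#\mathrm{Fix}(e^i)) = m(d-r)$. The maximality condition then reads
\[
\sum_{i=1}^{m-1}\Bigl[\,p(\lambda)\bigl(d - \#\mathrm{Fix}(e^i)\bigr) - \bigl(\dim V_\lambda - \chi_\lambda(e^i)\bigr)\,\Bigr] = 0.
\]
By Lemma~\ref{lem:repr.lemma.for.maximality}, using $\dim V_\lambda - \chi_\lambda((12)) = 2p(\lambda) > 0$ for non-trivial $\lambda$, every summand is non-negative, so the sum vanishes if and only if each summand vanishes, i.e. $\dim V_\lambda - \chi_\lambda(e^i) = p(\lambda)(d - \#\mathrm{Fix}(e^i))$ for every $i = 1, \ldots, m-1$. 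This is exactly the asserted equality, and running the argument at every ramified point yields the statement for every ramification pattern $e$.
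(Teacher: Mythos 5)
Your proof is correct, and it takes a genuinely different route from the paper's. The paper never computes the discriminant of $\tilde{\OO}_\lambda$ isotypically: it compares $\disc \OO_L^H = (\disc\OO_K)^{p(H)}$ (Corollary~\ref{lem:discr.formula.resolvents}) with $\disc\tilde{\OO}_L^H$ computed via the Lenstra--Pila--Pomerance formula (Lemma~\ref{lem:disc.maximal.order.resolvent}), obtains a permutation-character criterion for maximality of $\OO_L^H$, and then extracts the statement for a single $\lambda$ by letting $H$ run over all Young subgroups $S_\lambda$. You instead work directly with the nested lattices $\OO_\lambda\subseteq\tilde{\OO}_\lambda$ and compute $\deg\disc\hat{\tilde{\OO}}_\lambda = \dim V_\lambda\bigl(\dim V_\lambda - \dim V_\lambda^{\langle\rho\rangle}\bigr)$ from the inertia grading $\hat{\tilde{\OO}}_L \cong \bigoplus_{j}\Ind_{\langle\rho\rangle}^{S_d}\chi_j\otimes t^{j/m}k[[t]]$, using that the trace form pairs grade $j$ with grade $m-j$ (this is where tameness, i.e.\ $\charac k \nmid d$, enters, via the different of $k[[t^{1/m}]]/k[[t]]$) and that an invariant perfect pairing restricts perfectly to isotypic components. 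That formula is correct --- it is consistent with Lemma~\ref{lem:disc.maximal.order.resolvent} at $H=1$ --- and it eliminates both the Young-subgroup bookkeeping and the appeal to the Lenstra--Pila--Pomerance formula, at the cost of redoing locally some structure the paper has packaged in that lemma. Your argument also makes explicit a step the paper leaves terse: the discriminant comparison is a single equation summed over $i=1,\ldots,m-1$, and it splits into the asserted per-$i$ equalities only because each summand $p(\lambda)\bigl(d-\#\mathrm{Fix}(e^i)\bigr) - \bigl(\dim V_\lambda - \chi_\lambda(e^i)\bigr)$ is non-negative, which is exactly the inequality of Lemma~\ref{lem:repr.lemma.for.maximality}, usable because $\dim V_\lambda - \chi_\lambda((12)) = 2p(\lambda)>0$ for non-trivial $\lambda$; the paper needs the same input when it asserts that discriminant equality for $\OO_L^H$ is equivalent to $n(H,e^i)=p(H)n(S_{d-1},e^i)$ for every $i$ separately. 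What the paper's detour buys is economy elsewhere, since Lemma~\ref{lem:disc.maximal.order.resolvent} is reused for the discriminant tables in Section~\ref{sec:examples}; what yours buys is a self-contained local computation at the level of the isotypic component itself, together with the correct reading of ``maximal'' as a condition over both charts $k[t]$ and $k[t^{-1}]$, which is what the statement requires since the ramification patterns of $\varphi$ include the fibre above $\infty$.
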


\begin{proof}
The statement is local and geometric, so it is enough to prove this above $0\in \PP^1$ with the assumption that $k$ is algebraically closed. Let $e = (e_1, \ldots, e_r)$ be the ramification pattern of $0$ in $C\to \PP^1$. We first investigate when $\OO_L^H$ is maximal. By Lemma \ref{lem:discr.formula.resolvents} the resolvent $\OO_L^H$ has discriminant
\[
(\disc \OO_K)^{p(H)}.
\] 
Note that $\OO_L^H$ is maximal if and only if its discriminant is equal to the discriminant of $\tilde{\OO}_L^H$. So by the previous lemma for $H$ and $S_{d-1}$, $\OO_L^H$ is maximal if and only if for every $i=1, \ldots, \ord e-1$ we have that
\[
n(H, e^i) = p(H)n(S_{d-1}, e^i).
\]
Rewriting, this equation is equivalent to the statement that
\[
\frac{[S_d:H]- \chi_{\Ind_H^{S_d}\mathbf{1}}(e^i)}{[S_d:H]- \chi_{\Ind_H^{S_d}\mathbf{1}}((12))} = \frac{d-\#\mathrm{Fix}(e^i)}{2}.
\]
The result now follows by considering all Young subgroups $S_\lambda$ in $S_d$, for all partitions $\lambda$ of $d$.
\end{proof}

\begin{proof}[Proof of Proposition~\ref{prop:resolvent.maximal}]
The proof follows directly from Lemma~\ref{lem:repr.lemma.for.maximality}, Lemma~\ref{lem:resolvent.maximal.lem} and the discussion above.
\end{proof}

\subsection{Irreducible resolvents}

If the Galois closure of $K/k(t)$ does not have the full symmetric group $S_d$ as its Galois group, then the curve corresponding to $L$ will be reducible. However, it can still happen that a resolvent $L^H$ is irreducible. This will depend only on the Galois group of the Galois closure of $K/k(t)$ and the subgroup $H$, as described by Proposition~\ref{prop:resolvent.irre}.

\begin{proof}[Proof of Proposition~\ref{prop:resolvent.irre}]
Denote by $M$ the Galois closure of the extension $K/k(t)$. Let $r = [S_d:G] =  d! / \#G$. Then by \cite{bhargavasatriano}, $L$ is isomorphic to $M^r$ as a $k(t)$-algebra. In fact, $L$ is isomorphic as a $k(t)[S_d]$-algebra to
\[
\Ind_{G}^{S_d} M.
\]
Let us be a bit more explicit about the $S_d$-action on $L$. Let $s_1, \ldots, s_r$ be coset representatives for $G$ in $S_d$. Then we can write $L = \oplus_{i=1}^r s_i M$ as $M[S_d]$-module. Every $\sigma\in S_d$ determines a permutation $j_\sigma$ of the $s_i$ via $\sigma s_i = s_{j_\sigma(i)} g_{\sigma, i}$ for some $g_{\sigma, i}\in G$. For $x = \sum_i s_i x_i$ in $L$ the action of $\sigma$ on $x$ is defined by
\[
\sigma(x) = \sigma\left( \sum_i s_i x_i\right) = \sum_i s_{j_\sigma(i)} g_{\sigma, i}(x_i).
\]
Now assume that $GH=S_d$. Equivalently, this means that the action of $H$ on $S_d/G$ via left translation is transitive. Take any element $x = \sum_i s_i x_i$ in $L^H$. For every $i=1, \ldots, r$ there is by transitivity a $\sigma \in H$ such that $j_{\sigma}(1) = i$. Acting with this $\sigma$ on $x$ shows that $v_i = g_{\sigma, 1}(v_1)$ and hence all non-zero elements of $L^H$ are invertible. In other words, $L^H$ is a field. 

Conversely, assume that $GH\neq S_d$. Equivalently, the action of $H$ on $S_d/G$ is not transitive. Then we find $i,j\in \{1, \ldots, r\}$ such that for every $\sigma\in H$ we have that $j_{\sigma}(i)\neq j$. Now consider the element 
\[
\sum_{h\in H} h\cdot (s_i\cdot 1) \in L,
\]
(here $s_i\cdot 1$ is interpreted in $L = \oplus_{i=1}^r s_i M$.) The result is clearly a non-zero non-invertible element of $L^H$, since the coordinate at $s_j$ will be zero while the coordinate at $s_i$ is $\#H\neq 0$. Hence $L^H$ is not a field.
\end{proof}

\begin{remark}
In fact, the above proof shows somewhat more. Consider the action of $H$ on $S_d/G$, and assume that its orbits have sizes $r_1, \ldots, r_m$. Then $\res_H C$ is the union of $m$ curves $C_1, \ldots, C_m$, and for each $i$ $\res_H \varphi$ restricts to a morphism $C_i\to \PP^1$ of degree $r_i$.	
\end{remark}

\section{Scrollar syzygies and splitting types}

In this section we prove Theorem~\ref{thm:schreyer.is.scrollar}. To do this, we first recall how one can construct a minimal free resolution of $d$ points in $\PP^{d-2}$ using Galois theory~\cite{syzrep}. Afterwards, we use this resolution to study the splitting types of the Casnati--Ekedahl resolution of a degree $d$ cover $C\to \PP^1$, and relate these to our generalized scrollar invariants.

This section is completely analogous to Sections 4 and 6 from~\cite{syzrep}, but replacing the Galois closure by the $S_d$-closure. With the theory developed in the previous section, all proofs in~\cite{syzrep} carry over with minimal changes, hence no proofs will be given. 

\subsection{Scrollar syzygies from representation theory}\label{sec:resolution}

Throughout this subsection, $d\geq 4$ is a positive integer and $F$ will denote a field of characteristic zero or larger than $d$. This will later be specified to $F=k(t)$, but the construction of the free resolution works more generally. Recall that for $2\leq i\leq d-2$ we have defined the partitions $\lambda_i = (d-i, 2, 1^{i-2-1})$ of $d$. We extend this notation by $\lambda_0 = (d), \lambda_1 = (d-1, 1), \lambda_d = (1^d)$ (but we leave $\lambda_{d-1}$ undefined). 

Let $K/F$ be a degree $d\geq 4$ field extension, which is automatically separable because $F$ has characteristic $0$ or larger than $d$. Let $L = G(K/F)$ be the $S_d$-closure of $K/F$. By \cite[Thm.\,2]{bhargavasatriano} this is a ring of rank $d!$ over $F$. As in \cite{syzrep}, we will construct a minimal free resolution of a certain configuration of $d$ points in $\PP^{d-2}$ using Galois theory. All proofs are identical, since the arguments in the Galois closure work just as well in the formal $S_d$-closure. So we omit proofs, but give the construction of the resolution for later use.

Let $1 = \alpha_0, \alpha_1, \ldots,\alpha_{d-1}$ be a basis for $K/F$, where without loss of generality we can assume that $\Tr_{L/F}( \alpha_i) = 0$ for $i=1, \ldots, d-1$. Denote by $\alpha_i^*$ the dual basis of $\alpha_0, \alpha_1, \ldots, \alpha_{d-1}$ with respect to the trace pairing $\Tr_{L/F}$ on $K$, i.e.\ $\Tr_{L/F}(\alpha_i\alpha_j^*) = 1$ if $i=j$ and $0$ if $i\neq j$. Let $\rho^{(1)}, \ldots, \rho^{(d)}$ be the $d$ distinct field embeddings of $K$ in $F^\alg$, an algebraic closure of $F$. Consider the $d$ points
\[
(\rho^{(1)}(\alpha_1^{*}): \ldots : \rho^{(1)}(\alpha_{d-1}^{*})), (\rho^{(2)}(\alpha_1^{*}): \ldots : \rho^{(2)}(\alpha_{d-1}^{*})), \ldots, (\rho^{(d)}(\alpha_1^{*}): \ldots : \rho^{(d)}(\alpha_{d-1}^{*}))
\]
in $\PP^{d-2}$. These points are distinct, and no $d-1$ lie on a hyperplane. This set of points is closed under the action of $S_d$ acting on the upper indices in the $\rho^{(i)}$ and hence is defined over $F$. Let $I$ be the ideal in $F[x_1, \ldots, x_{d-1}]$ of this set of points. We will give a Galois-theoretic construction for the minimal free resolution of $R = F[x_1, \ldots, x_{d-1}]/I$.

Recall that, as an $S_d$-representation, $L$ is isomorphic to 
\[
L \cong F[S_d],
\]
i.e.\ $L$ is the regular representation of $S_d$. Hence it splits as a direct sum of subrepresentations $L = \oplus_\lambda W_\lambda$, where the sum is over all partitions $\lambda$ of $d$ and $W_\lambda$ is the isotypic component corresponding to $\lambda$. Every $W_\lambda$ is isomorphic to $V_\lambda^{\dim V_\lambda}$. For $2\leq i\leq d-2$ or $i=0,1,d$ define $V_i = W_{\lambda_i} \cap L^{S_{\lambda_i}}$ and write $\beta_{i-1} = \dim V_i$. We take a $F$-bases $\alpha_1, \ldots, \alpha_{d-1}$ of $V_1$ and $\omega_1^i, \ldots, \omega_{\beta_{i-1}}^i$ of $V_i$ for $2\leq i \leq d-2$.

Then our resolution will take the form
\begin{multline}\label{eqn:resolution.d.points}
0 \to V_d^\ast \otimes R(-d) \to V_{d-2}^\ast \otimes R(-d+2) \to V_{d-3}^\ast \otimes R(-d+3) \to \ldots \\
\to V_4^\ast \otimes R(-4) \to V_3^\ast \otimes R(-3) \to V_2^\ast \otimes R(-2) \to V_0^\ast \otimes R \to R/I \to 0,
\end{multline}
where $V_i^* = \Hom_F(V_i, F)$. The polynomial ring $R$ will come about as $\Sym V_1^*$.

\subsection{The construction}

We know explain how the maps in the resolution are constructed, beginning with the first step in the resolution. Let $y_1, \ldots, y_{d-1}$ be an $F$-basis of $V_{(d-1,1)}$ such that $y_1$ is fixed by $S_{d-1}$ and the other $y_i$ are conjugate to $y_1$ in a way compatible with the $S_d$-action. Recall that $\Sym^2 V_{(d-1,1)}$ decomposes as $V_{(d)}\oplus V_{(d-1,1)}\oplus V_{(d-2,2)}$. Hence by Lemma \ref{lem: induced and fixed space}, there is, up to scalar multiplication, a unique element 
\[
p^1 = \sum_{m,n=1}^{d-1} p^1_{mn} y_m\otimes y_n \in \Sym^2 V_{(d-1,1)}
\]
which is fixed by $S_2\times S_{d-2}$ and which generates $V_{(d-2,2)}$, where $p_{mn}^1\in F$. We may assume that $p^1_{mn} = p^1_{nm}$ for all $m,n$. Use this element to construct the following quadratic map
\[
\psi_1: \Sym^2 V_1\to V_2: \alpha\otimes \beta\mapsto \sum_{m,n=1}^{d-1} p^1_{mn} \alpha^{(m)} \beta^{(n)}.
\]
Here the $p^1_{mn}$ are in $F$. However, note that this construction works just as well over the prime subfield of $F$, so we can assume that all $p^1_{mn}$ are already contained in this prime subfield. By dualizing, we obtain a map $\psi_1^*: V_2^*\to \Sym^2 V_1^*$. We now identify $V_1^*$ with $R_1$ to obtain a linear map $\psi_1^*: V_2^*\otimes R(-2)\to V_0^*\otimes R$, which is the first step in our resolution. 

Now let $2\leq i\leq d-3$ and let $y_1, \ldots, y_{d-1}$ be as above. Take an $F$-basis $w_1^i, \ldots, w_{\beta_{i-1}}^i$ of $V_{\lambda_i}$ such that $w_1^i$ is fixed by $S_{\lambda_{i}}$ and all other $w_j^i$ are conjugate to $w_1^i$. By Lemma \ref{lem: multiplicity fixed} and Lemma \ref{lem: induced and fixed space} there is, up to scalar multiplication, a unique element
\[
p^i = \sum_{m=1}^{\beta_{i-1}}\sum_{n=1}^{d-1} p^i_{mn} w_m^i \otimes y_n \in V_{\lambda_i} \otimes V_{d-1,1}
\]
which is fixed by $S_{\lambda_{i+1}}$ and generates the representation $V_{\lambda_{i+1}}$. Again, the $p^i_{mn}$ may be taken to be in the prime subfield of $F$. Use this element to construct the map
\[
\psi_i: V_i\otimes V_1\to V_{i+1}: \omega\otimes \alpha\mapsto \sum_{m=1}^{\beta_{i-1}} \sum_{n=1}^{d-1} p^i_{mn} \omega^{(m)} \alpha^{(n)},
\]
where the conjugation of $\omega$ is labelled compatibly with $w_1^i$, i.e.\ $\omega^{(m)} = \sigma(\omega)$ for any $\sigma\in S_d$ with $\sigma(w_1^i) = w_m^i$. Upon dualizing and identifying $V_1^*$ with $R_1$, we obtain maps $\psi_i^*: V_{i+1}^*\otimes R(-i-1)\to V_i^*\otimes R(-i)$, which will be the next steps in the resolution.

For the final step in the resolution, consider the representation $V_{\lambda_{d-2}}\otimes \Sym^2 V_{(d-1,1)}$ which has a unique subrepresentation isomorphic to $V_{(1^d)}$. Then there exists an element, unique up to scalar multiplication,
\[
p^{d-2} = \sum_{i=1}^{\beta_{d-3}} \sum_{j,\ell=1}^{d-1} p_{ij\ell}^{d-2} w_i^{d-2} \otimes (y_j\otimes y_\ell)
\]
on which $S_d$ acts as the sign representation, where the $p_{ij\ell}^{d-2}$ are contained in the prime subfield of $F$. We may assume that $p_{ij\ell}^{d-2} = p_{i \ell j}^{d-2}$ for all $i,j,\ell$. As before, this gives a linear map
\[
\psi_{d-2}: V_{d-2} \otimes \Sym^2 V_1 \to V_d.
\]
Upon dualizing and identifying $V_1^*$ with $R_1$ we obtain the final map of the resolution $\psi_{d-2}^*: V_d^*\otimes R(-d)\to V_{d-2}^*\otimes R(-d+2)$.

\begin{theorem}
With the notation as above, the sequence \eqref{eqn:resolution.d.points} is a minimal free resolution of $R/I$.
\end{theorem}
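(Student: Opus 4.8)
The plan is to verify that the sequence \eqref{eqn:resolution.d.points} is exact and minimal, and the cleanest route is to reduce to the simply-branched case treated in \cite{syzrep} by base change. The entire construction of the maps $\psi_i$ is purely representation-theoretic: the structure constants $p^i_{mn}$ lie in the prime subfield and depend only on $d$, not on the particular extension $K/F$. Thus the complex \eqref{eqn:resolution.d.points} is obtained from a universal complex of $S_d$-representations by specializing the conjugates $\alpha_i^{(m)}$ to the explicit values coming from the embeddings $\rho^{(j)}$. First I would observe that exactness and minimality of such a complex of graded free $R$-modules can be checked after faithfully flat base change, and in particular after passing to $F^{\alg}$, where the $S_d$-closure $L\otimes_F F^{\alg}$ becomes (a power of) the split étale algebra $(F^{\alg})^d$ with its standard $S_d$-action by Theorem~\ref{thm:Sdclosure.basechange}.

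Next I would invoke the fact that over $F^{\alg}$ the configuration of $d$ points is, up to projective equivalence, the same configuration that arises from a simply branched (indeed generic) cover: the hypotheses that the points are distinct and that no $d-1$ of them lie on a hyperplane are exactly what is needed, and these are guaranteed by the nondegeneracy of the dual basis $\alpha_i^*$ under the trace pairing. Since \cite{syzrep} proves that the analogous sequence is a minimal free resolution whenever $L$ is the genuine Galois closure with group $S_d$, and since over $F^{\alg}$ our situation is reduced to precisely that geometric configuration of $d$ general points in $\PP^{d-2}$, the result follows there. The key point is that the maps $\psi_i^*$ and their duals, being defined by the same universal $S_d$-equivariant tensors $p^i$ (which by Lemmas~\ref{lem: induced and fixed space}, \ref{lem: multiplicity fixed} and Corollary~\ref{cor: multiplicity double next} and Lemma~\ref{lem: sym3} are unique up to scalar and land in the correct isotypic pieces), agree with those of \cite{syzrep} after this base change.

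Concretely I would carry out the steps as follows. First, confirm that the composite $\psi_i^*\circ \psi_{i+1}^*$ vanishes: this is a statement that a certain $S_d$-equivariant contraction of $p^i$ against $p^{i+1}$ is zero, which follows from the multiplicity computations in Corollary~\ref{cor: multiplicity double next} and Lemma~\ref{lem: sym3} showing that the relevant $V_{\lambda_{i+2}}$ does not occur in $V_{\lambda_i}\otimes \Sym^2 V_{(d-1,1)}$ for $i\le d-4$, so the complex is genuinely a complex. Second, observe that minimality is automatic because every map raises the polynomial degree by exactly one (each $\psi_i^*$ is given by degree-one entries after identifying $V_1^*$ with $R_1$), so no unit entries appear. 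Third, deduce exactness from \cite{syzrep} via the base-change reduction described above, using that the Betti numbers $\beta_i=\dim V_{i+1}$ match the known Betti numbers of $d$ general points in $\PP^{d-2}$.

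The main obstacle I anticipate is justifying the base-change reduction rigorously: one must check that the complex built from the $S_d$-closure $L$ specializes correctly, i.e.\ that forming the points and the maps $\psi_i$ commutes with $-\otimes_F F^{\alg}$, and that the étale algebra $L\otimes_F F^{\alg}$ really produces the general configuration rather than a degenerate one. This hinges on Theorem~\ref{thm:Sdclosure.basechange} together with the nondegeneracy of the trace pairing (which holds since $K/F$ is separable), and on verifying that the dual-basis points remain in general position over $F^{\alg}$. Once that is in place, every remaining verification is a formal transcription of the arguments in \cite[Sec.\,4, 6]{syzrep}, which is exactly why the theorem is stated without a separate proof.
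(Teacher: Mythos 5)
Your strategy is genuinely different from the paper's. The paper's proof is a transfer of arguments, not of statements: it observes that every computation in \cite[Sec.\,4]{syzrep} is a formal manipulation with conjugates, isotypic components and the multiplication of an algebra that is the regular representation of $S_d$, and hence applies verbatim when the genuine Galois closure is replaced by the formal $S_d$-closure $L$ (the only delicate point, noted in a footnote, is one lemma of \cite{syzrep} that uses infiniteness of $F$). You instead try to deduce the theorem from the \emph{statement} of \cite{syzrep} by faithfully flat base change. The skeleton of such a reduction is reasonable: exactness and minimality can indeed be tested after $-\otimes_F F^{\alg}$; the formation of $L$, of the spaces $V_i$, of the dual basis and of the maps $\psi_i^*$ commutes with this base change (Theorem~\ref{thm:Sdclosure.basechange}, plus the fact that isotypic components, invariants and trace duality commute with flat extensions); and $d$ points of $\PP^{d-2}$ in linearly general position form a single $\PGL_{d-1}$-orbit.

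The genuine gap is the step where you invoke \cite{syzrep} ``over $F^{\alg}$''. The hypothesis of \cite{syzrep} --- a degree $d$ \emph{field} extension whose Galois closure has group $S_d$ --- is never satisfied over an algebraically closed field, since $K\otimes_F F^{\alg}\cong (F^{\alg})^d$ is split; so there is nothing to cite there. Nor can you place the reference cover over $F$ itself, because $F$ need not admit any $S_d$-extension (e.g.\ $F=\QQ_p$ with $p>d$: every degree $d$ extension is tamely ramified, so its Galois closure is metacyclic, never $S_d$ for $d\geq 4$). What the argument needs is the intermediate object you allude to (``universal complex'') but never construct: the complex built by the same recipe from the split algebra $k_0^d$ over the prime field $k_0$, together with (i) an isomorphism of your complex $\otimes_F F^{\alg}$ with the base change of this split model, using uniqueness of the pair consisting of the split rank $d$ algebra and its $S_d$-closure, and (ii) exactness of the split model, obtained by base-changing the situation of \cite{syzrep} for an $S_d$-extension of an auxiliary field $F'$ of the same characteristic (such $F'$ exists, e.g.\ $\QQ$ or $\FF_p(t)$, but this must be said and is where the characteristic hypothesis enters). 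Without (i) and (ii) assembled over a common field, projective equivalence of the point configurations only tells you that $R/I$ has the correct graded Betti numbers, and your fallback --- a complex with the correct Betti numbers and no unit entries --- does not imply exactness: replacing all differentials in \eqref{eqn:resolution.d.points} by zero leaves the terms and the minimality untouched but destroys exactness. So the approach is salvageable, but the key logical link is missing as written, and once (i) and (ii) are supplied honestly the total effort is comparable to the paper's direct transfer of the arguments of \cite{syzrep} to the formal $S_d$-closure.
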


\begin{proof}
As mentioned above, the proof is completely identical to the work in \cite[Sec.\,4]{syzrep}. The computations there were done with the assumption that the Galois closure of $K/F$ has Galois group $S_d$. However, all of the arguments hold just as well in the formal $S_d$-closure $L$ here \footnote{There is one point in \cite[Lem.\,14]{syzrep} where it is used that $F$ is infinite to conclude that certain polynomials vanishing everywhere are zero. However, these polynomials have degree at most $4$ and we work in characteristic $0$ or larger than $d$. Since $d$ is at least $4$, we can still conclude from vanishing everywhere that they are the zero polynomial.}. 
\end{proof}

\begin{remark}
As in~\cite[Sec.\,4.14]{syzrep}, this construction also works for $d=3$. In that case, we obtain a resolution of $3$ points in $\PP^1$ of the form
\[
0 \to V_3^*\otimes R(-3) \to V_0^*\otimes R \to R/I \to 0.
\]
\end{remark}

\subsection{The splitting types are scrollar}

In this section we give a scrollar interpretation for Schreyer's invariants. Let us first recall the setting. Let $ \varphi: C\to \PP^1$ be a degree $d\geq 4$ cover, where $C$ is a nice curve of genus $g$. Define the vector bundle $\EE$ on $\PP^1$ as
\[
\EE = \OO_{\PP^1}(e_1)\oplus \OO_{\PP^1}(e_2)\oplus \ldots \oplus \OO_{\PP^1}(e_{d-1}),
\]
where $e_1, \ldots, e_{d-1}$ are the scrollar invariants of $C\to \PP^1$. The map $\varphi$ decomposes as the composition of the \emph{relative canonical embedding} $C\hookrightarrow \PP(\EE)$ with the natural bundle map $\pi: \PP(\EE)\to \PP^1$. The Picard group of $\PP(\EE)$ is generated by two classes $R, H$ where $R$ is the \emph{ruling class} $R = \pi^*\OO_{\PP^1}(1)$, and $H$ is the \emph{hyperplane class} $H = j^* \OO_{\PP^{g+2d-3}}(1)$ with $j: \PP(\EE) \to \PP^{g+2d-3}$ the tautological map. Casnati--Ekedahl~\cite{casnati_ekedahl}, generalizing work by Schreyer~\cite{schreyer}, give a minimal graded free resolution of $C$ inside $\PP(\EE)$ of the form 
\begin{align*}
0 \rightarrow \mathcal{O}_{\PP(\mathcal{E})} (-dH + (g-d-1)R) \rightarrow 
 \\
\bigoplus_{j=1}^{\beta_{d-3}} \mathcal{O}_{\PP(\mathcal{E})}(-(d-2)H + b_j^{(d-3)}R) \rightarrow \bigoplus_{j=1}^{\beta_{d-4}} \mathcal{O}_{\PP(\mathcal{E})}(-(d-3)H + b_j^{(d-4)}R)  \rightarrow   \\
\cdots \rightarrow \bigoplus_{j=1}^{\beta_1} \mathcal{O}_{\PP(\mathcal{E})}(-2H + b_j^{(1)}R) \rightarrow \mathcal{O}_{\PP(\mathcal{E})} \rightarrow \mathcal{O}_C \rightarrow 0,
\end{align*}
for certain integers $b_j^{(i)}$, called the \emph{splitting types}. These splitting types satisfy a duality statement~\cite[Cor.\,4.4]{schreyer}, for every $i\in \{1, \ldots, d-3\}$ we have that
\[
\{b_{j}^{(d-2-i)}\}_j = \{g+d-1-b^{(i)}_j\}_j.
\]
This can also be seen as a corollary of our duality result, Proposition~\ref{prop:duality}, in combination with Theorem~\ref{thm:schreyer.is.scrollar}.

The fibres of the map $\pi: \PP(\EE)\to \PP^1$ are isomorphic to $\PP^{d-2}$, and we equip these with homogeneous coordinates $x_1, \ldots, x_{d-1}$. We write $s,t$ for coordinates on $\PP^1$. This allows us to speak about defining equations for $C$, and more generally for the syzygies appearing in the Casnati--Ekedahl resolution. In more detail, the first map 
\[
\bigoplus_{j=1}^{\beta_1} \mathcal{O}_{\PP(\mathcal{E})}(-2H + b_j^{(1)}R) \rightarrow \mathcal{O_{\PP(\mathcal{E})}}
\]
is described by $\beta_1$ quadratic forms ($j=1, \ldots, \beta_1$)
\[
Q^j = \sum_{j_1+j_2+\ldots + j_{d_1} = 2} Q^{j}_{j_1, \ldots, j_{d-1}}(s,t) x_1^{j_1}\cdots x_{d-1}^{j_{d-1}},
\]
where $Q^j_{j_1, \ldots, j_{d-1}}(s,t)\in F[s,t]$ is homogeneous of degree 
\[
j_1e_1 + j_2e_2 + \ldots + j_{d-1}e_{d-1} - b_j^{(1)}.
\]
For $i\in \{1, \ldots, d-4\}$, the map
\[
\bigoplus_{j=1}^{\beta_{i+1}} \mathcal{O}_{\PP(\mathcal{E})}(-(i+2)H + b_j^{(i+1)}R) 
\to \bigoplus_{j=1}^{\beta_i} \mathcal{O}_{\PP(\mathcal{E})}(-(i+1)H + b_j^{(i)}R) 
\]
can be represented by a $\beta_i\times \beta_{i+1}$ matrix whose $(j_1,j_2)$ entry is a linear form $L_1(s,t)x_1 + \ldots L_{d-1}(s,t)x_{d-1}$ where $L_j(s,t)\in F[s,t]$ is homogeneous of degree 
\[
e_j + b_{j_1}^{(i)} - b_{j_2}^{(i+1)}.
\]
The last step in the resolution may be described similarly using quadratic forms.

To prove Theorem~\ref{thm:schreyer.is.scrollar} we apply the construction of the resolution from section \ref{sec:resolution} to the generic fibre of $C\to \PP^1$. By applying this construction with reduced bases, we find exactly the Casnati--Ekedahl resolution as above. 

\begin{proof}[Proof of Theorem~\ref{thm:schreyer.is.scrollar}]
The proof is identical to \cite{syzrep}, replacing the use of the maximal order of $L/k(t)$ by the $S_d$-closure of $\OO_K$. Crucial for this proof to work is the volume formula, Corollary~\ref{cor:volume.formula}.
\end{proof}

\section{Examples and applications}\label{sec:examples}

In this section we discuss some examples and applications of our work. We discuss a more general Maroni bound on the largest scrollar invariant of irreducible resolvent curves, which we apply to give new bounds on the splitting types. This extends the bounds from \cite{syzrep} beyond the simply branched case. We then give a detailed treatment of the most important resolvents in degree $d\leq 6$.

\subsection{The Maroni bound}

There are many results in the literature about understanding what the scrollar invariants of a cover $\varphi: C\to \PP^1$ can be, see e.g.\ \cite{bujokas_patel, coppensmartens, sameera, ohbuchi, peikertrosen, ballico, deopurkar_patel_bundles, landesmanlitt}. Using ideas from~\cite[Thm.\,3.1]{2torsionclassgroup}, we obtain the following general Maroni bound for resolvent curves.

\begin{theorem}[Maroni bound]\label{thm:maroni.bound}
Let $d\geq 4$ be an integer and $k$ a field of characteristic zero or larger than $d$. Let $\varphi: C\to \PP^1$ be a degree $d$ morphism over $k$, where $C$ is a nice curve of genus $g$. Let $H$ be a subgroup of $S_d$ for which $\res_H C$ is irreducible. Denote by $r_1 \leq \ldots \leq r_{[S_d:H]-1}$ the scrollar invariants of $\res_H \varphi: \res_H C\to \PP^1$. Then 
\[
r_{[S_d:H]-1} \leq \frac{2g(\res_H C) + 2[S_d:H] - 2}{[S_d:H]},
\]
where $g(\res_H C)$ is the arithmetic genus of $\res_H C$.
\end{theorem}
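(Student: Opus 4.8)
The plan is to reduce the inequality to the averaged form of the classical Maroni bound and then to prove that form directly for the (possibly non-maximal) order $\OO_L^H$. Set $D=[S_d:H]$ and $r_0=0$. Combining the volume formula (Corollary~\ref{cor:volume.formula}) with Theorem~\ref{thm:scrollar.invs.resolvent} shows that $\sum_{i=1}^{D-1}r_i=p(H)(g+d-1)$, and Corollary~\ref{cor:genus.resolvent} rewrites the right-hand side as $g(\res_H C)+D-1$. Therefore $\sum_{i=0}^{D-1}r_i=g(\res_H C)+D-1$, so that $\frac{2g(\res_H C)+2D-2}{D}=\frac{2}{D}\sum_i r_i$ and the statement becomes equivalent to
\[
D\cdot r_{D-1}\le 2\sum_{i=0}^{D-1}r_i,
\]
that is, the largest scrollar invariant is at most twice the average.

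To prove this I would work with the sheaf of algebras $\AA=\psi_\ast\OO_{\res_H C}$ on $\PP^1$, where $\psi=\res_H\varphi$. By the construction of the resolvent and the definition of the scrollar invariants, $\AA$ is a rank $D$ bundle splitting as $\OO_{\PP^1}\oplus\bigoplus_{i=1}^{D-1}\OO_{\PP^1}(-r_i)$, and its multiplication is governed by the reduced basis $1=v_0,v_1,\dots,v_{D-1}$ of $\OO_L^H$ for which the $t^{-r_i}v_i$ form a basis of $\OO_{L,\infty}^H$. Since $\res_H C$ is assumed irreducible, Proposition~\ref{prop:resolvent.irre} guarantees that $L^H$ is a field; thus $\AA$ is generically the field $L^H$ and multiplication by any nonzero element is injective. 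This integrality is exactly what replaces the smoothness hypothesis in the usual statement of the Maroni bound.

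The core step is to run the successive-minima argument of \cite[Thm.\,3.1]{2torsionclassgroup} in this function-field setting. Concretely, the pole orders at infinity satisfy $v_iv_j\in H^0(\res_H C,(r_i+r_j)F_\infty)$, where $F_\infty=\psi^{-1}(\infty)$; equivalently, writing $v_{D-1}v_j=\sum_k c_{jk}(t)v_k$ with $c_{jk}\in k[t]$ forces $\deg c_{jk}\le r_j+r_{D-1}-r_k$. One then exploits that, because $L^H$ is a field, a top-weight generator and its products cannot all be absorbed into a subbundle of strictly smaller weight; combining these degree constraints with the non-degeneracy of the trace pairing yields $D\,r_{D-1}\le 2\sum_i r_i$. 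This is precisely the mechanism by which \cite[Thm.\,3.1]{2torsionclassgroup} bounds the largest successive minimum of an order, and only the degree-$D$ separable cover structure together with the field property are used.

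The main obstacle is that $\res_H C$ may be singular, so Maroni's theorem for smooth covers cannot be quoted directly, and one cannot reduce to the normalization: enlarging $\OO_L^H$ to its maximal order only enlarges the bundle, which by termwise domination of splitting types makes the top invariant of the normalization \emph{no larger} than $r_{D-1}$ (the wrong direction), while the arithmetic genus strictly drops. The argument must therefore be carried out intrinsically for the order $\OO_L^H$, with the arithmetic genus $g(\res_H C)$ in place of the geometric genus throughout. The delicate point is to check that the multiplicative degree bookkeeping genuinely bounds $r_{D-1}$, rather than merely reproducing the Riemann--Roch identities, which are satisfied vacuously by any choice of invariants with the correct sum.
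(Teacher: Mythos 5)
Your overall frame coincides with the paper's proof: the reduction via Theorem~\ref{thm:scrollar.invs.resolvent}, Corollary~\ref{cor:volume.formula} and Corollary~\ref{cor:genus.resolvent} to the averaged statement $D\,r_{D-1}\le 2\sum_{i}r_i$ is exactly right, as are your observations that irreducibility enters through $L^H$ being a field (Proposition~\ref{prop:resolvent.irre}) and that one cannot reduce to the normalization. However, at the decisive step there is a genuine gap: the concrete constraints you write down cannot produce the inequality. From $v_{D-1}v_j=\sum_k c_{jk}(t)v_k$ with $\deg c_{jk}\le r_{D-1}+r_j-r_k$, the nonsingularity of $(c_{jk})$ (it is the matrix of multiplication by a nonzero element of the field $L^H$) together with the Leibniz expansion of the determinant gives a permutation $\sigma$ with $r_{D-1}+r_j-r_{\sigma(j)}\ge 0$ for all $j$; but summing these, the $r_j$ cancel against the $r_{\sigma(j)}$ and one obtains only $D\,r_{D-1}\ge 0$. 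So the multiplication-by-$v_{D-1}$ bookkeeping reproduces precisely the vacuous identity you warn against in your last paragraph, and the non-degeneracy of the trace pairing is not the missing ingredient --- it plays no role in the actual argument.

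What is needed instead (and what the paper does, following \cite[Thm.\,3.1]{2torsionclassgroup}) is the matrix of \emph{top} coefficients of products of the \emph{lower} basis vectors: $M=(m_{ij})_{1\le i,j\le D-2}$, where $m_{ij}$ is the coefficient of $v_{D-1}$ in $v_iv_j$. One first proves $\det M\neq 0$: otherwise some nonzero $\beta$ in the span of $v_1,\ldots,v_{D-2}$ satisfies $\beta A\subseteq A$, where $A=\mathrm{span}_{k(t)}(1,v_1,\ldots,v_{D-2})$; then $A$ and $L^H$ are both vector spaces over the field $k(t)(\beta)$, so $[k(t)(\beta):k(t)]\ge 2$ divides both $D-1$ and $D$, a contradiction --- this is where the field property is genuinely used. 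The Leibniz formula then yields a permutation $\pi$ of $\{1,\ldots,D-2\}$ with $m_{i\pi(i)}\neq 0$ for every $i$, and the degree constraint on that single coefficient forces $r_{D-1}\le r_i+r_{\pi(i)}$; summing over $i$ and adding $2r_{D-1}$ gives $D\,r_{D-1}\le 2\sum_{i=1}^{D-1}r_i$. Note that the weaker assertion that the products $v_iv_j$ ``cannot all be absorbed'' into the smaller subbundle would only give $r_{D-1}\le 2r_{D-2}$, which is insufficient: the full determinant non-vanishing, hence the permutation, is essential. Since your proposal neither identifies this matrix nor proves its nonsingularity, the core of the proof is missing, even though the strategy you name is the correct one.
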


The proof of this result crucially depends on the fact that $\res_H C$ is irreducible.

\begin{proof}
Let $n=[S_d:H]$, which is the degree of the map $\res_H C\to \PP^1$. Denote by $K$ the function field of $C$ and by $L$ its $S_d$-closure, so that $L^H$ is the function field of $\res_H C$. Let $1, \alpha_1, \ldots, \alpha_{n-1}$ be a reduced basis for $L^H$, with scrollar invariants $r_1\leq \ldots \leq r_{n-1}$. Consider the $(n-2)\times (n-2)$ matrix $M = (m_{ij})$ where $m_{ij}$ is the coefficient of $\alpha_{n-1}$ in $\alpha_i\alpha_j$. We claim that $\det M \neq 0$. If not, then there is an element $\beta$ in the span of $\alpha_1, \ldots, \alpha_{n-2}$ such that multiplication by $\beta$ fixes the $k(t)$-subspace $A$ of $L^H$ spanned by $1, \alpha_1, \ldots, \alpha_{n-2}$. Since $L^H$ is a field, so is $k(t)(\beta)$. Hence $A$ is a $k(t)(\beta)$-vector subspace of $L^H$. But $k(t)(\beta)$ has $k(t)$-dimension at least $2$, while $A$ is of codimension $1$ in $L^H$. This is a contradiction.

It follows that for some permutation $\pi\in S_{n-2}$, $m_{i\pi(i)} \neq 0$ for all $i$. For each $i$, both $v_{n-1}$ and $v_iv_{\pi(i)}$ complete $\{1, v_1, \ldots, v_{n-2}\}$ to a $k(t)$-basis of $L^H$. Since we are working with a reduced basis we have that $r_{n-1}\leq r_i + r_{\pi(i)}$. Summing over all $i$ and adding $2r_{n-1}$ gives that 
\[
nr_{n-1} \leq \sum_{i=1}^{n-2}(r_i + r_{\pi(i)}) + 2r_{n-1} \leq 2\sum_{i=1}^{n-1} r_i = 2g(\res_H C) +2n - 2.\qedhere
\]
\end{proof}

Applying this to $H = S_\lambda$ we obtain the following. 

\begin{corollary}\label{cor:maroni.bound}
Let $\varphi: C\to \PP^1$ be a degree $d\geq 4$ morphism over a field $k$ of characteristic $0$ or larger than $d$. Let $\lambda = (d_1, \ldots, d_r)$ be a partition of $d$ and assume that $R_{S_\lambda}C$ is irreducible. Then the scrollar invariants $e_{\lambda, j}$ of $\lambda$ with respect to $\varphi$ satisfy
\[
e_{\lambda, j}\leq \frac{d^2 - \sum_i d_i^2}{d(d-1)}(g+d-1).
\]
\end{corollary}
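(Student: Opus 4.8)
The plan is to apply the Maroni bound of Theorem~\ref{thm:maroni.bound} to the Young subgroup $H = S_\lambda = S_{d_1}\times\cdots\times S_{d_r}$, and then to simplify the resulting expression using the genus computation of Corollary~\ref{cor:genus.resolvent}. Throughout write $n = [S_d:S_\lambda] = d!/\prod_i d_i!$ for the degree of the resolvent cover $\res_{S_\lambda}\varphi$.

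The first step is to check that every $e_{\lambda,j}$ occurs among the scrollar invariants of $\res_{S_\lambda} C$. By Theorem~\ref{thm:scrollar.invs.resolvent} these invariants are the union, over all non-trivial $\mu\vdash d$, of the scrollar invariants of $\mu$, each multi-set counted with multiplicity $\mult(V_\mu,\Ind_{S_\lambda}^{S_d}\mathbf{1})$. For $\mu=\lambda$ this multiplicity is $\dim V_\lambda^{S_\lambda}$ by Lemma~\ref{lem: induced and fixed space}, which equals the Kostka number $K_{\lambda\lambda}=1$; in particular it is positive. Hence the whole multi-set $e_{\lambda,1},\ldots,e_{\lambda,\dim V_\lambda}$ appears, and each $e_{\lambda,j}$ is bounded above by the largest scrollar invariant $r_{n-1}$ of the (irreducible, by hypothesis) resolvent curve.

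The second step is to evaluate the right-hand side of the Maroni bound. By Corollary~\ref{cor:genus.resolvent} we have $g(\res_{S_\lambda}C)=p(S_\lambda)(g+d-1)+1-n$, so the numerator in Theorem~\ref{thm:maroni.bound} simplifies as
\[
2\,g(\res_{S_\lambda}C)+2n-2 = 2\,p(S_\lambda)(g+d-1),
\]
the constant and $\pm n$ terms cancelling cleanly; thus $r_{n-1}\leq 2p(S_\lambda)(g+d-1)/n$. It remains to compute $p(S_\lambda)/n$ using the transposition formula $p(S_\lambda)=(d-2)!\,\#\{\text{transpositions }\sigma\notin S_\lambda\}/\#S_\lambda$ from Corollary~\ref{lem:discr.formula.resolvents}. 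The transpositions lying in $S_\lambda$ are precisely those swapping two indices in a common block, so there are $\sum_i\binom{d_i}{2}$ of them; using $\sum_i d_i=d$, the number outside $S_\lambda$ is $\binom{d}{2}-\sum_i\binom{d_i}{2}=\tfrac12(d^2-\sum_i d_i^2)$. Combining this with $\#S_\lambda=\prod_i d_i!$, $1/n=\prod_i d_i!/d!$ and $d!=d(d-1)(d-2)!$ gives $2p(S_\lambda)/n=(d^2-\sum_i d_i^2)/(d(d-1))$, and the stated inequality follows.

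I expect the only genuine subtlety to be the first step, namely confirming that the invariants of $\lambda$ really do appear in the resolvent (the positivity of $\mult(V_\lambda,\Ind_{S_\lambda}^{S_d}\mathbf{1})$), since without it the Maroni bound on $r_{n-1}$ would not control the $e_{\lambda,j}$. The remainder is bookkeeping, and the agreeable cancellation in the genus term is what produces the clean closed form.
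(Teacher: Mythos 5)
Your proposal is correct and follows the paper's proof exactly: the paper also applies Theorem~\ref{thm:maroni.bound} to $H=S_\lambda$ and uses that $V_\lambda$ appears in $\Ind_{S_\lambda}^{S_d}\mathbf{1}$, leaving the genus cancellation and the computation of $p(S_\lambda)$ implicit. Your write-up simply fills in those bookkeeping details (via Corollary~\ref{cor:genus.resolvent} and the transposition count from Corollary~\ref{lem:discr.formula.resolvents}), and they check out.
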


\begin{proof}
Apply the previous theorem to $H=S_\lambda$ and use the fact that $V_\lambda$ appears in $\Ind_{S_\lambda}^{S_d} \mathbf{1}$.
\end{proof}

For the Schreyer invariants of a curve we obtain the following.

\begin{corollary}\label{cor:maroni.bound.for.schreyer}
Let $\varphi: C\to \PP^1$ be a degree $d$ morphism over $k$, whose associated Galois closure has Galois group $S_d$ or $A_d$. For $i \in \{1, 2, \ldots, d-3\}$, the elements $b_j^{(i)}$ of the splitting type of the $i$th syzygy bundle in the Casnati--Ekedahl resolution of $C$ with respect to $\varphi$ are contained in
\[
\left[ \tfrac{i(i+1)+2}{d(d-1)}(g+d-1)  , \tfrac{(i+1)(2d-i-2)-2}{d(d-1)}(g+d-1)  \right].
\]
In particular, all $b_j^{(i)}$ are non-negative.
\end{corollary}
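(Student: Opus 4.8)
The plan is to deduce both endpoints of the interval from the Maroni bound for resolvent curves, Corollary~\ref{cor:maroni.bound}, applied to two mutually transposed Young subgroups, combined with the duality for splitting types. The role of the hypothesis on the Galois group will be solely to guarantee that the relevant resolvents are irreducible, so that the Maroni bound is available.

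First I would translate the statement into the language of scrollar invariants of partitions. By Theorem~\ref{thm:schreyer.is.scrollar}, the splitting types $b_j^{(i)}$ are exactly the scrollar invariants of the partition $\lambda_{i+1} = (d-i-1, 2, 1^{i-1})$ with respect to $\varphi$. To apply Corollary~\ref{cor:maroni.bound} with $\lambda = \lambda_{i+1}$, I must check that $\res_{S_{\lambda_{i+1}}} C$ is irreducible. By Proposition~\ref{prop:resolvent.irre} this holds if and only if $G\cdot S_{\lambda_{i+1}} = S_d$, where $G\in\{S_d, A_d\}$ is the Galois group of the Galois closure. Since $\lambda_{i+1}$ has a part equal to $2$ (and $d-i-1\geq 2$ because $i\leq d-3$), the Young subgroup $S_{\lambda_{i+1}}$ contains a transposition, hence an odd permutation; therefore $A_d\cdot S_{\lambda_{i+1}} = S_d$, and a fortiori the same holds for $G=S_d$. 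This is precisely where the assumption $G\in\{S_d,A_d\}$ is used.

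For the upper bound I would apply Corollary~\ref{cor:maroni.bound} directly to $\lambda_{i+1}$. With parts $d_1 = d-i-1$, $d_2 = 2$ and $i-1$ parts equal to $1$, one computes $\sum_m d_m^2 = (d-i-1)^2 + 4 + (i-1)$, and a short simplification yields
\[
\frac{d^2 - \sum_m d_m^2}{d(d-1)} = \frac{(i+1)(2d-i-2)-2}{d(d-1)},
\]
which is exactly the right endpoint; as the Maroni bound controls every scrollar invariant of $\lambda_{i+1}$, all $b_j^{(i)}$ satisfy it. For the lower bound I would invoke duality. The transpose of $\lambda_{i+1}$ is $\lambda_{d-i-1} = (i+1, 2, 1^{d-i-3})$, so Proposition~\ref{prop:duality} together with Theorem~\ref{thm:schreyer.is.scrollar} (equivalently, Schreyer's duality $\{b_j^{(d-2-i)}\}_j = \{g+d-1 - b_j^{(i)}\}_j$) converts an upper bound on $b^{(d-2-i)}$ into a lower bound on $b^{(i)}$. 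Noting that $d-2-i$ again lies in $\{1,\ldots,d-3\}$ and that $\lambda_{d-i-1}$ also has a part $2$ (so its resolvent is irreducible by the same argument), I apply the upper bound already established at the index $d-2-i$ and subtract from $g+d-1$; this simplifies to
\[
\frac{i(i+1)+2}{d(d-1)}(g+d-1),
\]
the left endpoint. Finally, since $i\geq 1$ forces $i(i+1)+2\geq 4>0$ and $g+d-1\geq 3>0$, the lower endpoint is positive, giving non-negativity of all $b_j^{(i)}$.

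The only delicate part is the index bookkeeping — keeping straight the shift between the syzygy index $i$, the partition $\lambda_{i+1}$, its transpose $\lambda_{d-i-1}$, and the dual syzygy index $d-2-i$ — together with the two elementary algebraic simplifications. The conceptual content is carried entirely by Corollary~\ref{cor:maroni.bound} and the duality statement, with the hypothesis on $G$ entering only to ensure irreducibility of the resolvents $\res_{S_{\lambda_{i+1}}} C$ and $\res_{S_{\lambda_{d-i-1}}} C$.
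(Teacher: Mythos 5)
Your proposal is correct and takes essentially the same route as the paper: the paper's proof consists precisely of citing Corollary~\ref{cor:maroni.bound}, Theorem~\ref{thm:schreyer.is.scrollar} and Proposition~\ref{prop:resolvent.irre} (deferring details to \cite[Cor.\,48]{syzrep}), and your write-up fills in exactly that chain, including the duality step (Proposition~\ref{prop:duality}/Schreyer's duality) needed to convert the Maroni upper bound at index $d-2-i$ into the lower endpoint at index $i$, with correct verification that a part equal to $2$ in $\lambda_{i+1}$ makes $A_d\cdot S_{\lambda_{i+1}} = S_d$. The index bookkeeping and both algebraic simplifications check out.
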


\begin{proof}
This follows from Corollary~\ref{cor:maroni.bound}, Theorem~\ref{thm:schreyer.is.scrollar} and Proposition~\ref{prop:resolvent.irre}, or see~\cite[Cor.\,48]{syzrep} for more details.
\end{proof}

\subsection{Casnati's theorem}

Let $\varphi: C\to \PP^1$ be a degree $4$ cover. The most interesting resolvent is the one with respect to the order $8$ dihedral subgroup $D_4\subset S_4$ (e.g.\ generated by $(1234)$ and $(13)$). Classically, this resolvent was used by Lagrange to reduce solvability of degree $4$ polynomials to that of degree $3$~\cite{coxgalois}. Geometrically, this resolvent corresponds to the Recillas' trigonal construction~\cite{recillas}. The following result is a more precise version of a theorem due to Casnati~\cite{casnati}, see also~\cite{deopurkar_patel}.

\begin{corollary}[Casnati]
Let $C\to \PP^1$ be a degree $4$ cover in characteristic not $2$ or $3$. Let $K/k(t)$ be the corresponding function field extension, and let $M/k(t)$ be the Galois closure of $K/k(t)$ with Galois group $G\subset S_4$. Denote by $b_1, b_2$ the Schreyer invariants of $C\to \PP^1$. Then the scrollar invariants of $\res_ {D_4} C\to \PP^1$ are
\[
b_1, b_2.
\]
Moreover, $\res_{D_4} C$ is smooth if and only if $C\to \PP^1$ has no ramification of type $(2,2)$ or $(4)$, and $\res_{D_4} C$ is irreducible if and only if $G = S_4$ or $G = A_4$.

If $\res_{D_4}C$ is irreducible, then its arithmetic genus is $g+1$, and we have the bound $b_i\leq \frac{2}{3}g+2$.
\end{corollary}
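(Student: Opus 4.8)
The plan is to specialize the general results of the preceding sections to the case $d=4$, $H=D_4$, reducing each assertion to a short representation-theoretic or group-theoretic computation. I would begin by decomposing the degree-$3$ permutation representation $\Ind_{D_4}^{S_4}\mathbf{1}$, using $\mult(V_\lambda,\Ind_{D_4}^{S_4}\mathbf 1)=\dim V_\lambda^{D_4}$ from Lemma~\ref{lem: induced and fixed space}. The sign representation $V_{(1^4)}$ has no $D_4$-fixed vector because the $4$-cycle $(1234)\in D_4$ is odd, and the standard representation $V_{(3,1)}$ has none because $D_4$ acts transitively on $\{1,2,3,4\}$, so that all invariants of the permutation representation lie in $V_{(4)}$. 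This leaves a multiplicity budget of $3-1=2$ for the two remaining irreducibles $V_{(2,2)}$ and $V_{(2,1,1)}$, of dimensions $2$ and $3$, and the only possibility is
\[
\Ind_{D_4}^{S_4}\mathbf{1}\cong V_{(4)}\oplus V_{(2,2)},
\]
each component occurring with multiplicity one. By Theorem~\ref{thm:scrollar.invs.resolvent} the scrollar invariants of $\res_{D_4}C$ are then exactly those of the partition $(2,2)$, and since $(2,2)=(d-i-1,2,1^{i-1})$ for $d=4$, $i=1$, Theorem~\ref{thm:schreyer.is.scrollar} identifies these with the splitting types $b_1,b_2$, settling the first claim.

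For smoothness I would apply Proposition~\ref{prop:resolvent.maximal} to the two partitions $(4)$ and $(2,2)$. The partition $(4)$ satisfies condition (2) for every ramification type, so the only constraint arises from $\lambda=(2,2)$. Writing $(2,2)$ as $(a,b)$, condition (3) applies precisely when $e=(3,1)$, while condition (1) covers $e=(1^4)$ and $e=(2,1^2)$; for the two remaining degree-$4$ ramification types, namely $(2,2)$ and $(4)$, none of the three conditions holds. Hence $\res_{D_4}C$ is non-singular if and only if $\varphi$ has no ramification of type $(2,2)$ or $(4)$.

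The irreducibility criterion is Proposition~\ref{prop:resolvent.irre}: $\res_{D_4}C$ is irreducible if and only if $GD_4=S_4$. As $C$ is a nice curve, $G$ is a transitive subgroup of $S_4$, so $4\mid\#G$. If $3\nmid\#G$, then $G$ is a $2$-group, whence $\#G/\#(G\cap D_4)$ is a power of $2$ and cannot equal $3$; since $\#(GD_4)=\#D_4\cdot\#G/\#(G\cap D_4)$, this forces $\#(GD_4)\neq 24$ and the resolvent is reducible. If instead $3\mid\#G$, then $12\mid\#G$, so $G=A_4$ or $G=S_4$; the case $S_4$ is immediate, and for $A_4$ one has $A_4\cap D_4=V_4$, giving $\#(A_4D_4)=12\cdot8/4=24$ and $A_4D_4=S_4$. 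Thus $\res_{D_4}C$ is irreducible exactly when $G\in\{S_4,A_4\}$.

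Finally, assuming $\res_{D_4}C$ irreducible, I would compute $p(D_4)$ from the formula $p(H)=(d-2)!\,\#\{\text{transpositions}\notin H\}/\#H$. The only transpositions lying in $D_4$ are the diagonal reflections $(13)$ and $(24)$, leaving four outside, so $p(D_4)=2\cdot4/8=1$. Corollary~\ref{cor:genus.resolvent} then gives arithmetic genus $p(D_4)(g+d-1)+1-[S_4:D_4]=(g+3)+1-3=g+1$, and the Maroni bound of Theorem~\ref{thm:maroni.bound}, applied with $n=3$ and $g(\res_{D_4}C)=g+1$, yields
\[
b_i\leq\frac{2(g+1)+2\cdot3-2}{3}=\frac{2}{3}g+2.
\]
Every step is a finite computation; the only point demanding genuine care is the group-theoretic classification in the irreducibility step, where the divisibility observation that a power of $2$ cannot equal $3$ is what rules out all transitive $2$-subgroups at once.
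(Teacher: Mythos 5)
Your proposal is correct and follows essentially the same route as the paper's proof, which simply cites the decomposition $\Ind_{D_4}^{S_4}\mathbf{1} \cong V_{(4)}\oplus V_{(2,2)}$ together with Theorems~\ref{thm:schreyer.is.scrollar} and~\ref{thm:scrollar.invs.resolvent}, Propositions~\ref{prop:resolvent.maximal} and~\ref{prop:resolvent.irre}, and Corollary~\ref{cor:genus.resolvent}; you supply the details the paper leaves implicit (the fixed-space computation, the case check on ramification types, the product-of-subgroups counting, and $p(D_4)=1$), all of which are accurate. The only cosmetic difference is at the end: the paper invokes Corollary~\ref{cor:maroni.bound.for.schreyer}, whereas you apply Theorem~\ref{thm:maroni.bound} directly with $n=3$ and genus $g+1$ --- but since that corollary is itself deduced from the theorem, this is the same argument with one intermediate citation removed.
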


\begin{proof}
The decomposition $\Ind_{D_4}^{S_4} \mathbf{1} = V_{(4)}\oplus V_{(2,2)}$ in combination with Theorem~\ref{thm:schreyer.is.scrollar} shows that the scrollar invariants of $\res_{D_4} C$ are indeed $b_1$ and $b_2$. The fact about smoothness and irreducibility follows from Propositions \ref{prop:resolvent.maximal} and \ref{prop:resolvent.irre}. The genus of $\res_{D_4} C$ may be computed using Corollary~\ref{cor:genus.resolvent}. The last part follows from Corollary~\ref{cor:maroni.bound.for.schreyer}.
\end{proof}

\begin{remark}
Note that in this setting, if $\res_{D_4} C$ is smooth then it is also geometrically irreducible. Indeed, if $C\to \PP^1$ is simply branched then $G=S_4$, implying that $\res_{D_4} C$ is geometrically irreducible. If there is $(3,1)$-ramification in $C\to \PP^1$ then $G$ can only be $S_4$ or $A_4$, since these are the only transitive subgroups of $S_4$ containing $3$-cycles.

One can also see this geometrically as follows. Given equations of $C$, it is possible to write down an explicit equation for $\res_{D_4} C$ inside its scroll $S$, which is a Hirzebruch surface, see~\cite[p.\,1351]{vangeemen} or~\cite[p.\,1340]{bhargavaquarticrings}. If $\res_{D_4} C$ were reducible, then a computation of the self intersection number of $\res_{D_4} C$ inside $S$ would contradict smoothness of $\res_{D_4}C$. We refer to~\cite[Sec.\,3]{kresch_curves_2001} for a similar argument.
\end{remark}

One might wonder about what the non-singular model of $\res_{D_4} C$ looks like. So let us denote by $\res_{D_4} \tilde{C}$ the non-singular model of $\res_{D_4} C$, and let us assume that $k$ is algebraically closed. In particular, we are interested in the ramification of the map $\res_{D_4} \tilde{C}\to \PP^1$, and the singularities of $\res_{D_4} C$. Both of these will only depend on the ramification of the original map $\varphi: C\to \PP^1$, and so we work locally. For this, we denote by $\hat{\OO}_L$ the completion of $\OO_L$ at $0$, and similarly $\hat{\tilde{\OO}}_L$ for the completion of the maximal order $\tilde{\OO}_L$ of $L$. Both of these are $k[[t]]$-algebras. Then the situation is summarized in the following addendum.

\begin{addendum}
With notation as above, the discriminants of $\hat{\OO}_L^{D_4}$, $\hat{\tilde{\OO}}_L^{D_4}$, and the ramification pattern $e'$ of $0$ in $\res_{D_4} \tilde{C}\to \PP^1$ are given by the following table, where $e$ is the ramification of $0$ in $\varphi: C\to \PP^1$.
\begin{center}
\begin{longtable}{ c | c c c c}
 $e$ & $\disc(\hat{\OO}_K)$ & $\disc(\hat{\OO}_L^{D_4})$ & $\disc(\hat{\tilde{\OO}}_L^{D_4})$ & $e'$ \\
$(1^4)$ & $1$ & $1$ & $1$ & $(1^3)$ \\
$(2, 1^2)$ & $t$ & $t$ & $t$ & $(2,1)$ \\
$(3,1)$ & $t^2$ & $t^2$ & $t^2$ & $(3)$ \\
$(4)$ & $t^3$ & $t^3$ & $t$ & $(2,1)$ \\
$(2,2)$ & $t^2$ & $t^2$ & $1$ & $(1^3)$
\end{longtable}
\end{center}
\end{addendum}

\begin{proof}
The discriminant of $\hat{\tilde{\OO}}_L^{D_4}$ can be computed by using Lemma~\ref{lem:disc.maximal.order.resolvent}, from which the result follows.
\end{proof}

Note that it is also possible to deduce the genus of $\res_{D_4} \tilde{C}$ from this table using the Riemann--Hurwitz formula. For example, if there is only $(2,2)$ ramification in $\varphi$, then $\res_{D_4}\tilde{C}$ is an unramified triple cover of $\PP^1$. This is only possible if $\res_{D_4}\tilde{C}$ is a disjoint union of three copies of $\PP^1$.

\subsection{Cayley's sextic resolvent}

Let $\varphi: C\to \PP^1$ be of degree $d=5$ now. The most important resolvent in this case is the one with respect to the group $\AGL_1(\FF_5)$ generated by $(12345)$ and $(1243)$ in $S_5$. This is a group of order $20$ and the corresponding resolvent is known as \emph{Cayley's sextic resolvent}. This resolvent can be used to determine whether a degree $5$ polynomial is solvable by radicals~\cite{coxgalois}. In Bhargava's work on quintic ring parametrizations~\cite{bhargavaquinticrings}, this resolvent also appears. A proof of the following theorem can likely also be extracted from that work.

\begin{corollary}
Let $\varphi: C\to \PP^1$ be a degree $5$ cover in characteristic not $2$ or $3$, where $C$ is a nice curve of genus $g$. Let $K/k(t)$ be the corresponding function field extension, and let $M/k(t)$ be the Galois closure of $K/k(t)$ with Galois group $G\subset S_5$. Denote by $b_1^{(2)}, b_2^{(2)}, \ldots, b_5^{(2)}$ the splitting types of the second syzygy module in the Casnati--Ekedahl resolution of $\varphi$. Then the scrollar invariants of $\res_{\AGL_1(\FF_5)} C\to \PP^1$ are
\[
b_1^{(2)}, b_2^{(2)}, \ldots, b_5^{(2)}.
\]
Moreover, $\res_{\AGL_1(\FF_5)} C$ is smooth if and only if $C\to \PP^1$ is simply branched, and $\res_{\AGL_1(\FF_5)} C$ is irreducible if and only if $G = S_5$ or $G = A_5$.

If $\res_{\AGL_1(\FF_5)} C$ is irreducible, then it has arithmetic genus $3g+7$ and we have the bound $b_5^{(2)} \leq \frac{4}{5}(g+4)$.
\end{corollary}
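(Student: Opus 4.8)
The plan is to reduce everything to the single representation-theoretic fact that $\Ind_{\AGL_1(\FF_5)}^{S_5}\mathbf{1} \cong V_{(5)}\oplus V_{(2,2,1)}$, after which each assertion follows from the general results of the previous sections. First I would compute the permutation character $\chi$ of $S_5$ on the six cosets of $H = \AGL_1(\FF_5)$. Writing $H$ as the group of affine maps $x\mapsto ax+b$ on $\FF_5$, its non-identity elements have cycle types $(5)$ (the four translations), $(2,2,1)$ (the five maps with $a=-1$) and $(4,1)$ (the ten maps with $a=\pm 2$); in particular $H$ contains no transposition and no element of type $(3,1^2)$, $(3,2)$. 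Counting fixed cosets then gives $\chi = (6,0,2,0,0,2,1)$ on the classes $(1^5), (2,1^3), (2^2,1), (3,1^2), (3,2), (4,1), (5)$, and pairing against the character table of $S_5$ yields $\langle\chi, \chi_{(5)}\rangle = \langle \chi, \chi_{(2,2,1)}\rangle = 1$ with all other multiplicities zero. Since $(2,2,1) = (d-i-1, 2, 1^{i-1})$ for $d = 5$, $i = 2$, and $\dim V_{(2,2,1)} = 5 = \beta_2$, combining this decomposition with Theorem~\ref{thm:scrollar.invs.resolvent} (only the non-trivial summand contributes) and Theorem~\ref{thm:schreyer.is.scrollar} shows that the scrollar invariants of $\res_H C$ are exactly $b_1^{(2)}, \ldots, b_5^{(2)}$.

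For smoothness I would invoke Proposition~\ref{prop:resolvent.maximal}: the partitions occurring in $\Ind_H^{S_5}\mathbf{1}$ are $(5)$ and $(2,2,1)$. The partition $(5)$ always falls under case (2), while $(2,2,1)$ is neither $(d)$ nor $(d-1,1)$ nor a two-row partition, so cases (2) and (3) are excluded and only case (1) can hold; hence $\res_H C$ is smooth precisely when every ramification pattern of $\varphi$ is $(1^5)$ or $(2,1^3)$, i.e.\ when $\varphi$ is simply branched. For irreducibility I would apply Proposition~\ref{prop:resolvent.irre}, which asks whether $GH = S_5$. As $C$ is integral, $G$ is a transitive subgroup of $S_5$, hence one of $C_5, D_5, F_{20}, A_5, S_5$; the equality $|GH| = 120$ forces $6\mid |G|$ and $|G\cap H| = |G|/6$. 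The three smaller groups fail this divisibility, whereas $A_5\cap F_{20}$ is the group of even elements of $F_{20}$ --- the ten elements of types $(1^5), (5), (2^2,1)$ --- so $|A_5\cdot F_{20}| = 60\cdot 20/10 = 120$. Thus $\res_H C$ is irreducible iff $G = A_5$ or $G = S_5$; the apparent dependence on the embedding $G\subset S_5$ is harmless by the remark following Proposition~\ref{prop:resolvent.irre}.

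It remains to compute the genus and the bound. Here I would use $p(H) = (d-2)!\,\#\{\text{transpositions }\sigma\notin H\}/\#H = 6\cdot 10/20 = 3$, since $H$ contains no transpositions, and then Corollary~\ref{cor:genus.resolvent} with $[S_5:H] = 6$ gives arithmetic genus $3(g+4)+1-6 = 3g+7$. For the bound, note that irreducibility of $\res_H C$ forces $G\in\{S_5, A_5\}$ by the previous paragraph, so Corollary~\ref{cor:maroni.bound.for.schreyer} applies with $d = 5$, $i = 2$; its upper endpoint equals $\frac{(i+1)(2d-i-2)-2}{d(d-1)}(g+d-1) = \frac{16}{20}(g+4) = \frac{4}{5}(g+4)$, whence $b_5^{(2)}\le \frac{4}{5}(g+4)$. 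Equivalently one may apply Corollary~\ref{cor:maroni.bound} directly to $\lambda = (2,2,1)$ --- whose resolvent $\res_{S_{(2,2,1)}}C$ is irreducible once $G\in\{A_5, S_5\}$, since $S_{(2,2,1)}$ contains a transposition --- using $\frac{d^2 - \sum_i d_i^2}{d(d-1)} = \frac{25-9}{20} = \frac{4}{5}$.

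I expect the representation-theoretic step to be the main obstacle: the whole corollary rests on identifying the induced representation as $V_{(5)}\oplus V_{(2,2,1)}$, which in turn requires the exact cycle-type census of $\AGL_1(\FF_5)$ inside $S_5$ and a correct fixed-coset count. Once this decomposition is secured, the determination of the relevant partition, the value $p(H) = 3$, and the Maroni coefficient $4/5$ all fall out mechanically, and the remaining claims are direct applications of Propositions~\ref{prop:resolvent.maximal} and~\ref{prop:resolvent.irre} and Corollaries~\ref{cor:genus.resolvent} and~\ref{cor:maroni.bound.for.schreyer}.
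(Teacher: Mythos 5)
Your proposal is correct and takes essentially the same route as the paper: the decomposition $\Ind_{\AGL_1(\FF_5)}^{S_5}\mathbf{1} = V_{(5)}\oplus V_{(2^2,1)}$ combined with Theorems~\ref{thm:schreyer.is.scrollar} and~\ref{thm:scrollar.invs.resolvent}, Propositions~\ref{prop:resolvent.maximal} and~\ref{prop:resolvent.irre}, Corollary~\ref{cor:genus.resolvent}, and the Maroni bound; you simply make explicit the character computation, the transitive-subgroup check, and the value $p(H)=3$ that the paper leaves implicit. One point in your favour: the paper cites Theorem~\ref{thm:maroni.bound} for the final bound, which applied directly to $H=\AGL_1(\FF_5)$ only gives $b_5^{(2)}\leq g+4$, whereas your route through Corollary~\ref{cor:maroni.bound.for.schreyer} (or Corollary~\ref{cor:maroni.bound} with $\lambda=(2,2,1)$) is what actually produces the stated constant $\tfrac{4}{5}$.
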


\begin{proof}
That the scrollar invariants are as described follows from the decomposition $\Ind_{\AGL_1(\FF_5)}^{S_5} \mathbf{1} = V_{(5)} \oplus V_{(2^2,1)}$, in combination with Theorem~\ref{thm:schreyer.is.scrollar} and Theorem~\ref{thm:scrollar.invs.resolvent}. The statements about smoothness and irreducibility follow from Propositions \ref{prop:resolvent.irre} and \ref{prop:resolvent.maximal}. The final bound follows from Theorem~\ref{thm:maroni.bound}.
\end{proof}

As for the possible ramification patterns in the non-singular model of the resolvent, we have the following. We use the same notation as in the previous subsection.

\begin{addendum}
With notation as above, the discriminant of $\hat{\OO}_L^{\AGL_1(\FF_5)}$, $\hat{\tilde{\OO}}_L^{\AGL_1(\FF_5)}$, and the ramification pattern $e'$ of $0$ in $\res_{\AGL_1(\FF_5)} \tilde{C}\to \PP^1$ is given by the following table, where $e$ is the ramification of $0$ in $\varphi: C\to \PP^1$.
\begin{center}
\begin{longtable}{ c | c c c c}
 $e$ & $\disc(\hat{\OO}_K)$ & $\disc(\hat{\OO}_L^{\AGL_1(\FF_5)})$ & $\disc(\hat{\tilde{\OO}}_L^{\AGL_1(\FF_5)})$ & $e'$ \\
$(1^5)$		&	$1$		&	$1$		&	$1$		&	$(1^6)$ \\
$(2,1^3)$	&	$t$		&	$t^3$	&	$t^3$	&	$(2,1^4)$ \\
$(3,1^2)$	&	$t^2$	&	$t^6$	&	$t^4$	&	$(3,3)$ \\
$(4,1)$		&	$t^3$	&	$t^9$	&	$t^3$	&	$(4, 1^2)$ \\
$(5)$		&	$t^4$	&	$t^{12}$&	$t^4$	&	$(5,1)$ \\
$(2,2,1)$	&	$t^2$	&	$t^6$	&	$t^2$	&	$(2^2,1^2)$ \\
$(3,2)$		&	$t^3$	&	$t^9$	&	$t^5$	&	$(6)$.
\end{longtable}
\end{center}
\end{addendum}

\begin{proof}
The discriminant of $\hat{\tilde{\OO}}_L^{D_4}$ can be computed by using Lemma~\ref{lem:disc.maximal.order.resolvent}. For the ramification behaviour $e'$ of $0$ in $\tilde{\OO}_L^H$ we note that since the Galois closure of $L^H$ in $L$ is equal to $L$, we must have that $\lcm(e')=\lcm(e)$. Combining this fact with the information about the discriminant allows us to complete the table as given.
\end{proof}

\subsection{Exotic resolvent}

From degree $6$ onwards, there are scrollar invariants for which we have no geometric interpretations. For $d=6$ these are the scrollar invariants with respect to the partitions $(3,3)$ and $(2,2,2)$. Note that these are dual to each other by Proposition~\ref{prop:duality}. Let $\varphi: C\to \PP^1$ be a degree $6$ cover, where $C$ is a nice curve of genus $g$ and denote by $a_1\leq \ldots \leq a_5$ the scrollar invariants of $\varphi$ with respect to the partition $(2^3)$. We will call the $a_i$ the \emph{exotic invariants of $C$}. By the volume formula, Corollary~\ref{cor:volume.formula}, we have that $a_1+\ldots + a_5 = 3g+15$. 

It is possible to realize these scrollar invariants via a resolvent curve as well. For this, consider the \emph{exotic} embedding $S_5\hookrightarrow S_6$ whose image $S_5'$ is a transitive subgroup of $S_6$. This subgroup is unique up to conjugation, one instance being generated by $(1234)$ and $(1562)$.

Define the following two transitive subgroups of $S_6$:
\[
F_{36} = \langle (246), (15)(24), (1452)(36)\rangle, \quad F_{72} = \langle (246), (24), (14)(25)(36)\rangle.
\]
These have orders $36$ and $72$ respectively.

\begin{corollary}
Let $\varphi: C\to \PP^1$ be a degree $6$ cover in characteristic not $2, 3$ or $5$, where $C$ is a nice curve of genus $g$. Let $K/k(t)$ be the corresponding function field extension, and let $M/k(t)$ be the Galois closure of $K/k(t)$ with Galois group $G\subset S_6$. Denote by $a_1, a_2, \ldots, a_5$ the exotic invariants of $\varphi$. Then the scrollar invariants of $\res_{S_5'} C\to \PP^1$ are
\[
a_1, \ldots, a_5.
\]
Moreover, $\res_{S_5'} C$ is smooth if and only if $C\to \PP^1$ is simply branched, and $\res_{S_5'} C$ is irreducible if and only if $G = S_6, A_6, F_{36}$ or $F_{72}$.

If $\res_{S_5'} C$ is irreducible, then it has arithmetic genus $3g+10$ and we have the bound $a_5 \leq \frac{4}{5}(g+5)$.
\end{corollary}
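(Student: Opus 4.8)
The plan is to treat the five assertions in turn, each reducing to a result established earlier. \emph{For the scrollar invariants}, by Theorem~\ref{thm:scrollar.invs.resolvent} with $H = S_5'$ it suffices to decompose $\Ind_{S_5'}^{S_6}\mathbf{1}$. Since $S_5'$ is the image of a point stabilizer under the outer automorphism $\tau$ of $S_6$, its permutation character is the standard permutation character precomposed with $\tau$; as $\tau$ interchanges the cycle types $(2,1^4)\leftrightarrow(2^3)$, $(3,1^3)\leftrightarrow(3^2)$, $(3,2,1)\leftrightarrow(6)$, $(4,1^2)\leftrightarrow(4,2)$ and fixes the others, one reads off all character values directly. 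Subtracting the trivial constituent leaves a $5$-dimensional character; I would verify $\langle\chi,\chi\rangle = 1$ (irreducibility) and compute its value $-1$ on a transposition. A single Murnaghan--Nakayama step gives $\chi_{(2,2,2)}((12)) = -1$ whereas $\chi_{(3,3)}((12)) = 1$ and the two further $5$-dimensional irreducibles $(5,1),(2,1^4)$ take the values $3,-3$ there, pinning the constituent down as $V_{(2,2,2)}$. Hence $\Ind_{S_5'}^{S_6}\mathbf{1} = V_{(6)}\oplus V_{(2,2,2)}$, and the scrollar invariants of $\res_{S_5'}\varphi$ are precisely those of $(2^3)$, namely $a_1,\dots,a_5$.

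\emph{Smoothness and irreducibility.} The only nontrivial partition occurring is $(2,2,2)$, which is neither $(6)$, $(5,1)$, nor a two-row partition, so in Proposition~\ref{prop:resolvent.maximal} only alternative (1) can hold, forcing every ramification pattern of $\varphi$ to be $(1^6)$ or $(2,1^4)$; thus $\res_{S_5'}C$ is smooth iff $\varphi$ is simply branched. For irreducibility, Proposition~\ref{prop:resolvent.irre} gives the criterion $G S_5' = S_6$, i.e. $|G\cap S_5'| = |G|/6$; conceptually this says $G$ is transitive in the exotic action, equivalently that $\tau(G)$ is transitive in the standard one. Running this over the transitive subgroups of $S_6$ leaves exactly $S_6$, $A_6$, $F_{36}$ and $F_{72}$, the two sporadic cases amounting to the finite checks $|F_{36}\cap S_5'| = 6$ and $|F_{72}\cap S_5'| = 12$.

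\emph{Genus.} I would apply Corollary~\ref{cor:genus.resolvent} with $H = S_5'$. From the decomposition, $\dim\Ind_{S_5'}^{S_6}\mathbf{1} = 6$ and $\chi_{\Ind_{S_5'}^{S_6}\mathbf{1}}((12)) = 1 + (-1) = 0$, so $p(S_5') = \tfrac12(6-0) = 3$ (equivalently $p((6)) + p((2^3)) = 0 + 3$, matching $24\cdot\tfrac{15}{120}=3$ from the fact that $S_5'$ contains no transposition). With $[S_6:S_5'] = 6$ this gives arithmetic genus $3(g+5) + 1 - 6 = 3g + 10$.

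\emph{The bound, and the main obstacle.} The sharp constant $\tfrac45(g+5)$ is exactly Corollary~\ref{cor:maroni.bound} for the Young subgroup $S_{(2,2,2)}$, where $d^2 - \sum d_i^2 = 36 - 12 = 24$ and $\tfrac{24}{30}(g+5) = \tfrac45(g+5)$; this is the Maroni bound for the degree-$90$ resolvent $\res_{S_{(2,2,2)}}C$, of arithmetic genus $36g+91$. The delicate point — which I expect to be the main obstacle — is that Corollary~\ref{cor:maroni.bound} requires $\res_{S_{(2,2,2)}}C$ to be irreducible, i.e. $G S_{(2,2,2)} = S_6$. This holds for $G = S_6$ and $G = A_6$ (here $|A_6\cap S_{(2,2,2)}| = 4$), but fails on order grounds for $G = F_{36}$ and $G = F_{72}$ (already $|F_{72}|\cdot|S_{(2,2,2)}| = 576 < 720$), where the degree-$90$ resolvent is reducible. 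In those two cases one always has the weaker $a_5 \le g+5$ from Theorem~\ref{thm:maroni.bound} applied to the exotic resolvent itself; to recover $\tfrac45(g+5)$ I would analyse the orbits of $F_{36}$ and $F_{72}$ on $S_6/S_{(2,2,2)}$ and apply the Maroni bound to each irreducible component separately, which is the step demanding the most care.
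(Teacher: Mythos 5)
Your first four steps coincide with the paper's own proof, which is the one-line reduction ``exactly as in the previous results, using $\Ind_{S_5'}^{S_6}\mathbf{1}=V_{(6)}\oplus V_{(2^3)}$'': Theorem~\ref{thm:scrollar.invs.resolvent} for the invariants, Propositions~\ref{prop:resolvent.maximal} and~\ref{prop:resolvent.irre} for smoothness and irreducibility, Corollary~\ref{cor:genus.resolvent} for the genus. Your computations there ($p(S_5')=3$, genus $3(g+5)+1-6=3g+10$, the intersection counts $|F_{36}\cap S_5'|=6$, $|F_{72}\cap S_5'|=12$) are correct. One factual slip in your derivation of the decomposition: the outer automorphism of $S_6$ does \emph{not} interchange $(4,1^2)$ and $(4,2)$; it fixes both classes (only $(2,1^4)\leftrightarrow(2^3)$, $(3,1^3)\leftrightarrow(3^2)$ and $(6)\leftrightarrow(3,2,1)$ are swapped). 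Indeed, since $S_5'\cong\mathrm{PGL}_2(\FF_5)$ acts $2$-transitively, the permutation character is $1+\chi$ with $\chi$ irreducible, and then $\chi=\chi_{(2,2,2)}$ forces the value $1+\chi_{(2,2,2)}((4,1^2))=2$, which equals the \emph{standard} number of fixed points of a $(4,1^2)$-element; with your swapped values the resulting class function is not a character at all (its inner product with $\chi_{(2,2,2)}$ is $\tfrac12$). The slip is harmless for your conclusion, which rests only on the norm and the value $-1$ on transpositions, both unaffected.

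The genuine gap is the one you flag yourself: the bound $a_5\le\frac{4}{5}(g+5)$ when $G=F_{36}$ or $G=F_{72}$. Corollary~\ref{cor:maroni.bound} with $\lambda=(2^3)$ requires $\res_{S_{(2,2,2)}}C$ to be irreducible, i.e.\ $G\cdot S_{(2,2,2)}=S_6$; this holds for $S_6$ and $A_6$ but fails for $F_{36}$ and $F_{72}$ on order grounds, exactly as you observe, and the direct Maroni bound (Theorem~\ref{thm:maroni.bound}) on the exotic resolvent itself only gives $a_5\le g+5$. Your proposed repair --- decomposing $S_6/S_{(2,2,2)}$ into $F_{36}$- resp.\ $F_{72}$-orbits and applying the Maroni bound to each irreducible component of the degree-$90$ resolvent --- is not carried out: one would need the arithmetic genus of each component, i.e.\ how the discriminant of $\OO_L^{S_{(2,2,2)}}$ distributes over the orbits, and it is not clear without that computation that the constant $\frac{4}{5}$ is recovered. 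So as written your proposal establishes the bound only for $G=S_6,A_6$. You should know that the paper is no more careful here: its proof appeals to the degree-$4$ and degree-$5$ cases, where the relevant Young-subgroup resolvents ($\res_{S_{(2,2)}}C$, $\res_{S_{(2,2,1)}}C$) are automatically irreducible for \emph{every} Galois group allowed by the respective irreducibility hypotheses; in degree $6$ this automatic irreducibility breaks down precisely for $F_{36}$ and $F_{72}$, and the paper does not address it. You have therefore isolated a real subtlety that the paper glosses over; carrying out the component-wise argument (or finding a substitute) is the step that remains to be done.
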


\begin{proof}
The corollary follows in exactly the same way as the previous results, using that $\Ind_{S_5'}^{S_6}\mathbf{1} = V_{(6)} \oplus V_{(2^3)}$.
\end{proof}

Again, it is possible to analyse the smooth model of the resolvent $\res_{S_5}' \tilde{C}$.

\begin{addendum}
The discriminant of $\hat{\OO}_L^{S_5'}$, $\hat{\tilde{\OO}}_L^{S_5'}$, and the ramification pattern $e'$ of $0$ in $\res_{\AGL_1(\FF_5)} \tilde{C}\to \PP^1$ is given by the following table, where $e$ is the ramification of $0$ in $\varphi: C\to \PP^1$.
\begin{center}
\begin{longtable}{ c | c c c c}
 $e$ & $\disc(\hat{\OO}_K)$ & $\disc(\hat{\OO}_L^{S_5'})$ & $\disc(\hat{\tilde{\OO}}_L^{S_5'})$ & $e'$ \\
$(1^6)$			&	$1$			&	$1$			&	$1$			&	$(1^6)$ \\
$(2, 1^4)$		&	$t$			&	$t^3$		&	$t^3$		&	$(2^3)$ \\
$(2^2, 1^2)$		&	$t^2$		&	$t^6$		&	$t^2$		&	$(2^2, 1^2)$ \\		
$(2^3)$			&	$t^3$		&	$t^9$		&	$t$			&	$(2, 1^4)$ \\
$(3, 1^3)$		&	$t^2$		&	$t^6$		&	$t^4$		&	$(3^2)$ \\
$(3, 2, 1)$		&	$t^3$		&	$t^9$		&	$t^5$		&	$(6)$ \\
$(3^2)$			&	$t^4$		&	$t^{12}$		&	$t^2$		&	$(3, 1^3)$ \\
$(4, 1^2)$		&	$t^3$		&	$t^{12}$		&	$t^3$		&	$(4, 1^2)$ \\
$(4, 2)$			&	$t^4$		&	$t^{12}$		&	$t^4$		&	$(4,2)$ \\
$(5,1)$			&	$t^4$		&	$t^{12}$		&	$t^4$		&	$(5,1)$ \\
$(6)$			&	$t^5$		&	$t^{15}$		&	$t^3$		&	$(3,2,1)$ \\
\end{longtable}
\end{center}
\end{addendum}
\begin{proof}
The techniques for the proof are similar to the previous addendum.
\end{proof}

\begin{remark}
The map sending a degree $6$ cover $C\to \PP^1$ to the degree $6$ cover $\res_{S_5'} \tilde{C}$ is an involution. In other words, applying this construction twice gives back the original curve. This is also visible in the ramification patterns in the table above.
\end{remark}

\bibliographystyle{amsplain}
\bibliography{MyLibrary}

\providecommand{\bysame}{\leavevmode\hbox to3em{\hrulefill}\thinspace}
\providecommand{\MR}{\relax\ifhmode\unskip\space\fi MR }
% \MRhref is called by the amsart/book/proc definition of \MR.
\providecommand{\MRhref}[2]{%
  \href{http://www.ams.org/mathscinet-getitem?mr=#1}{#2}
}
\providecommand{\href}[2]{#2}
\begin{thebibliography}{10}

\bibitem{higher_specht_2}
S.~Ariki, T.~Terasoma, and H.-F. Yamada, \emph{Higher specht polynomials},
  Hiroshima Mathematical Journal \textbf{27} (1997), no.~1, 177--188.

\bibitem{ballico}
E.~Ballico, \emph{A remark on linear series on general {$k$}-gonal curves},
  Bolletino dell'Unione Matematica Italiana \textbf{7} (1989), 195--197.

\bibitem{bhargavaquarticrings}
M.~Bhargava, \emph{Higher composition laws {III}: {The} parametrization of
  quartic rings}, Annals of Mathematics \textbf{159} (2004), no.~3, 1329--1360.

\bibitem{bhargavaquinticrings}
\bysame, \emph{Higher composition laws {IV}: {The} parametrization of quintic
  rings}, Annals of Mathematics \textbf{167} (2008), no.~1, 53--94.

\bibitem{bhargavasatriano}
M.~Bhargava and M.~Satriano, \emph{On a notion of {``{Galois} closure"} for
  extensions of rings}, Journal of the {E}uropean {M}athematical {S}ociety
  \textbf{16} (2014), no.~9, 1881--1913.

\bibitem{2torsionclassgroup}
M.~Bhargava, A.~Shankar, T.~Taniguchi, F.~Thorne, J.~Tsimerman, and Y.~Zhao,
  \emph{Bounds on 2-torsion in class groups of number fields and integral
  points on elliptic curves}, Journal of the American Mathematical Society
  \textbf{33} (2020), no.~4, 1087--1099.

\bibitem{bopp}
C.~Bopp, \emph{Syzygies of 5-gonal canonical curves}, Doc. Math. \textbf{20}
  (2015), 1055--1069. \MR{3424474}

\bibitem{bopphoff}
C.~Bopp and M.~Hoff, \emph{Resolutions of {general} {canonical} {curves} on
  {rational} {normal} {scrolls}}, Archiv der Mathematik \textbf{105} (2015),
  no.~3, 239--249.

\bibitem{bujokas_patel}
G.~Bujokas and A.~Patel, \emph{Invariants of a general branched cover of
  {$\mathbf{P}^1$}}, International Mathematics Research Notices \textbf{2021}
  (2021), no.~6, 4564--4604.

\bibitem{campbellinvariant}
H.E.A.E. Campbell and D.L. Wehlau, \emph{Invariant theory in all
  characteristics}, CRM proceedings \& lecture notes, American Mathematical
  Society.

\bibitem{casnati}
G.~Casnati, \emph{Covers of {algebraic} {varieties} {III}. {The} {discriminant}
  of a {cover} of {degree} 4 and the {trigonal} {construction}}, Transactions
  of the American Mathematical Society \textbf{350} (1998), 1359--1378.

\bibitem{casnati_ekedahl}
G.~Casnati and T.~Ekedahl, \emph{Covers of algebraic varieties {I}. {A} general
  structure theorem, covers of degree {$3,4$} and {E}nriques surfaces}, Journal
  of {A}lgebraic {G}eometry \textbf{5} (1995), 439--460.

\bibitem{syzrep}
W.~Castryck, F.~Vermeulen, and Y.~Zhao, \emph{Scrollar invariants, syzygies,
  and representations of the symmetric group}, arXiv preprint (2022).

\bibitem{coppenskeemmartens}
M.~Coppens, C.~Keem, and G.~Martens, \emph{Primitive linear series on curves},
  Manuscripta {M}athematica \textbf{77} (1992), 237--264.

\bibitem{coppensmartens}
M.~Coppens and G.~Martens, \emph{Linear series on a general $k$-gonal curve},
  Abhandlungen aus dem Mathematischen Seminar der Universit\"at Hamburg
  \textbf{69} (1999), 347--371.

\bibitem{coxgalois}
D.~Cox, \emph{Galois {Theory}}, John {W}iley \& {S}ons, 2012.

\bibitem{deopurkar_patel}
A.~Deopurkar and A.~Patel, \emph{The {P}icard rank conjecture for the {H}urwitz
  spaces of degree up to five}, Algebra \& Number Theory \textbf{9} (2015),
  no.~2, 459--492.

\bibitem{deopurkar_patel_bundles}
\bysame, \emph{Vector bundles and finite covers}, Forum Math. Sigma \textbf{10}
  (2022), Paper No. e40, 30. \MR{4436598}

\bibitem{fultonharris}
W.~Fulton and J.~Harris, \emph{Representation theory: a first course}, Graduate
  {T}exts in {M}athematics, {S}pringer, 1991.

\bibitem{hamermesh}
M.~Hamermesh, \emph{Group theory and its application to physical problems},
  Dover {B}ooks on {P}hysics and {C}hemistry, {D}over {P}ublications, 1962.

\bibitem{hessRR}
F.~Hess, \emph{Computing {Riemann}-{Roch} spaces in algebraic function fields
  and related topics}, Journal of {S}ymbolic {C}omputation \textbf{33} (2002),
  no.~4, 425--445.

\bibitem{kresch_curves_2001}
Andrew Kresch, Joseph~L. Wetherell, and Michael~E. Zieve, \emph{Curves of every
  genus with many points, {I}: {Abelian} and toric families}, Journal of
  Algebra \textbf{250} (2002), no.~1, 353 -- 370.

\bibitem{landesmanlitt2}
A.~Landesman and D.~Litt, \emph{Applications of the algebraic geometry of the
  {P}utman--{W}ieland conjecture}, arXiv preprint (2022),
  \url{https://arxiv.org/pdf/2209.00718}, v2.

\bibitem{landesmanlitt}
\bysame, \emph{Geometric local systems on very general curves and
  isomonodromy}, arXiv preprint (2022), \url{https://arxiv.org/abs/2202.00039},
  v2.

\bibitem{hyperelliptic}
H.~W. Lenstra, J.~Pila, and C.~Pomerance, \emph{A {hyperelliptic} {smoothness}
  {test}, {II}}, Proceedings of the London Mathematical Society \textbf{84}
  (2002), no.~1, 105--146.

\bibitem{MS}
V.B. Mehta and C.S. Seshadri, \emph{Moduli of vector bundles on curves with
  parabolic structures}, Mathematische {A}nnalen \textbf{248} (1980), no.~3,
  205--239.

\bibitem{milne_ant}
J.S. Milne, \emph{Algebraic number theory}, available at:
  \url{https://www.jmilne.org/math/CourseNotes/ant.html}.

\bibitem{neukirch}
J.~Neukirch, \emph{Algebraic number theory}, Springer-{V}erlag {B}erlin, 1999.

\bibitem{ohbuchi}
A.~Ohbuchi, \emph{On some numerical relations of {$d$}-gonal linear systems},
  Journal of Mathematics, Tokushima University \textbf{31} (1997), 7--10.

\bibitem{peikertrosen}
C.~Peikert and A.~Rosen, \emph{Lattices that admit logarithmic worst-case to
  average-case connection factors}, full version, available at
  \url{https://eprint.iacr.org/2006/444}, of eponymous paper published at
  {P}roceedings of the {$39$}th {ACM} {S}ymposium on {T}heory of {C}omputing
  (2007), 478--487.

\bibitem{recillas}
S.~Recillas, \emph{Jacobians of curves with {$g^1_4$}'s are the {P}ryms of
  trigonal curves}, Bolet{\'i}n de la {S}ociedad {M}atem{\'a}tica {M}exicana
  \textbf{19} (1974), no.~1, 9--13.

\bibitem{sagan}
B.~E. Sagan, \emph{The symmetric group. {R}epresentations, combinatorial
  algorithms, and symmetric functions}, Springer, 2000.

\bibitem{schreyer}
F.-O. Schreyer, \emph{Syzygies of canonical curves and special linear series},
  Mathematische {A}nnalen \textbf{275} (1986), 105--137.

\bibitem{serre_repr}
J.-P. Serre, \emph{Linear representations of finite groups}, Springer New York,
  NY, 1977.

\bibitem{specht}
W.~Specht, \emph{Die irreduziblen darstellungen der symmetrischen gruppe},
  Math. Z. \textbf{39} (1935), no.~1, 696--711.

\bibitem{stacks-project}
The {Stacks project authors}, \emph{The stacks project},
  \url{https://stacks.math.columbia.edu}, 2022.

\bibitem{stanley_coinvariants}
R.~Stanley, \emph{Invariants of finite groups and their applications to
  combinatorics}, Bulletin of the American Mathematical Society \textbf{1}
  (1979), no.~3.

\bibitem{higher_specht_1}
T.~Terasoma and H.~Yamada, \emph{Higher specht polynomials for the symmetric
  group}, Proceedings of the Japan Academy, Series A, Mathematical Sciences
  \textbf{69} (1993), no.~2, 41--44.

\bibitem{vanderwaerden}
B.~L. van~der Waerden, \emph{Die {Z}erlegungs- und {T}r{\"a}gheitsgruppe als
  {P}ermutationsgruppen}, Mathematische {A}nnalen \textbf{111} (1935),
  731--733.

\bibitem{vangeemen}
B.~van Geemen, \emph{Some remarks on {B}rauer groups of {K}3 surfaces},
  Advances in {M}athematics \textbf{197} (2005), no.~1, 222--247.

\bibitem{sameera}
S.~Vemulapalli, \emph{Bounds on successive minima of orders in number fields
  and scrollar invariants of curves}, arXiv preprint (2022).

\end{thebibliography}

\end{document}